\documentclass[11pt]{amsart}
\usepackage{amsmath,amsfonts,amssymb}
\usepackage[mathscr]{eucal}
\usepackage{xcolor}
\usepackage{enumitem}
\usepackage{hyperref}
\hypersetup{colorlinks=true,linktoc=all,linkcolor=blue,citecolor=blue,urlcolor=blue,
  pdftitle={Fundamental groups of Calabi-Yau spaces},
  pdfauthor={Xin Fu, Bin Guo, Jian Song and Juanyong Wang}}
\allowdisplaybreaks[1]
\setlist[enumerate]{leftmargin=*,label=\textup{(\roman*)},itemsep=3pt}
\newtheorem{theorem}{Theorem}[section]
\newtheorem{proposition}[theorem]{Proposition}
\newtheorem{lemma}[theorem]{Lemma}

\theoremstyle{remark}
\newtheorem{remark}[theorem]{Remark}
\numberwithin{equation}{section}
\newcommand{\R}{\mathbb R}
\newcommand{\C}{\mathbb C}
\newcommand{\Q}{\mathcal Q}
\newcommand{\reg}{\mathrm{reg}}
\newcommand{\loc}{\mathrm{loc}}
\newcommand{\eps}{\varepsilon}
\newcommand{\dd}{\mathrm d}
\newcommand{\ddc}{\sqrt{-1}\partial\bar\partial}
\newcommand{\Ric}{\operatorname{Ric}}
\newcommand{\tr}{\operatorname{tr}}
\newcommand{\Hess}{\operatorname{Hess}}
\newcommand{\RCD}{\operatorname{RCD}}
\newcommand{\BE}{\operatorname{BE}}

\newcommand{\Z}{\mathbb Z}
\newcommand{\cO}{\mathcal O}
\newcommand{\tX}{\widetilde X}
\newcommand{\Lip}{\operatorname{Lip}}
\newcommand{\LL}{\mathcal L}

\title[Fundamental groups of Calabi-Yau spaces]
{Fundamental groups of Calabi-Yau spaces}
\author[X. Fu]{Xin Fu}
\address{School of Science, Institute for Theoretical Sciences,
Westlake University, Hangzhou 310030, China}
\email{fuxin54@westlake.edu.cn}
\thanks{Xin Fu is supported by National Key R\&D Program of China
2024YFA1014800 and NSFC No.~12401073.}
\author[B. Guo]{Bin Guo}
\address{Department of Mathematics \& Computer Science,
Rutgers University, Newark, NJ 07102, USA}
\email{bguo@rutgers.edu}
\thanks{Bin Guo is  supported in part by the National
Science Foundation grant DMS-2203607.}
\author[J. Song]{Jian Song}
\address{Department of Mathematics, Rutgers University,
Piscataway, NJ 08854, USA}
\email{jiansong@math.rutgers.edu}
\thanks{ Jian Song is supported in part by the National Science Foundation grant DMS-2505575.}
\author[J. Wang]{Juanyong Wang}
\address{State Key Laboratory of Mathematical Sciences,
Academy of Mathematics and Systems Science, Chinese Academy of Sciences,
Beijing 100190, China}
\email{juanyong.wang@amss.ac.cn}
\thanks{Juanyong Wang is supported by NSFC (Grants No.~12301060 and
12288201) and National Key R\&D Program of China
(Grant No.~2021YFA1003100).}
\date{}

\begin{document}
\raggedbottom
\clubpenalty=10000
\widowpenalty=10000
\displaywidowpenalty=10000
\begin{abstract}
We study the fundamental groups of a normal projective Calabi-Yau
variety $X$ with klt singularities and of its regular locus $X_{\reg}$.
We prove that both groups are almost abelian, and that their ranks
are determined by the irregularities of finite \'etale and
quasi-\'etale covers, respectively. Our approach is differential
geometric, and relies on recent advances in the geometric theory of
complex Monge-Amp\`ere equations.

\end{abstract}
\maketitle

\section{Introduction}\label{sec:introduction}

It is a classical theme in K\"ahler geometry that the fundamental
group of a compact K\"ahler manifold with vanishing first Chern class
is governed by its Ricci-flat geometry. By Yau's solution of the
Calabi conjecture \cite{Yau}, every K\"ahler class contains a
Ricci-flat K\"ahler metric. The splitting theorem of Cheeger-Gromoll
\cite{CG}, combined with the K\"ahler structure, then shows that the
universal cover is of the form $\C^q\times Y$, where $Y$ is compact
and simply connected. Consequently, after passing to a finite
\'etale cover, the manifold becomes a product of a complex torus and
such a factor $Y$, and its fundamental group is almost
abelian. The decomposition theory of Bogomolov and Beauville
\cite{Bogomolov,Beauville} goes further, and describes $Y$ as a
product of irreducible Calabi-Yau and irreducible holomorphic
symplectic manifolds. This is a basic instance of a general
principle, namely that Ricci curvature can determine both the
topology and the complex structure of a compact space.

The question of fundamental groups belongs more broadly to the study
of compact K\"ahler manifolds with nef anticanonical bundle. 
P\u aun \cite{Paun97} proved that the fundamental groups of such
manifolds have polynomial growth, and that they are almost abelian
in the projective case. Combined with his later surjectivity theorem
for the Albanese map \cite{Paun17}, his work yields almost
abelianity in the compact K\"ahler case as well. These results show
that the conclusion on the fundamental group persists well beyond
the existence of a Ricci-flat metric. Extensions to singular spaces
and log pairs have recently been studied by Fu-Guo-Song-Wang
\cite{FGSW}. In the present paper, we focus on projective Calabi-Yau
spaces, and on the ordinary fundamental group of their regular loci.

Related structure theorems have been obtained by algebraic methods,
using positivity of direct images and the theory of foliations.
Cao \cite{CaoAlbanese} proved local triviality of the Albanese map
for projective manifolds with nef anticanonical bundle, and
Cao-H\"oring \cite{CaoHoering} established their decomposition
theorem. Campana-Cao-Matsumura \cite{CCM} extended the structure
theorem to klt pairs with smooth underlying variety.
Matsumura-Wang \cite{MatsumuraWang} subsequently proved that a
projective klt pair with nef anti-log canonical divisor admits,
after a finite quasi-\'etale cover, a locally trivial rationally
connected fibration over a variety with numerically trivial canonical
class. More recently, Matsumura-Wang-Wu-Zhang \cite{MWWZ}
established a Beauville-Bogomolov-Yau decomposition for K\"ahler
generalized klt pairs of Calabi-Yau type, with a locally constant
rationally connected fibration after a finite quasi-\'etale cover.
These results place the fundamental group problem within the broader
structure theory of varieties and pairs with nef anticanonical class.

For singular Calabi-Yau varieties, an algebraic decomposition theory
has been developed through the work of Druel \cite{Druel},
Greb-Guenancia-Kebekus \cite{GGK}, and H\"oring-Peternell
\cite{HP}. In the projective case, it shows that, after a finite
quasi-\'etale cover, the variety becomes a product of an abelian
variety and singular analogues of the irreducible factors in the
smooth decomposition. In the compact K\"ahler setting,
Claudon-Graf-Guenancia-Naumann \cite{CGGN} developed the Bochner
principle and the Albanese theory, and Bakker-Guenancia-Lehn
\cite{BGL} established the corresponding decomposition theorem. The
dimension of the abelian factor is equal to the augmented
irregularity. Earlier low-dimensional results for $\pi_1(X)$ include
\cite{GKPTrivial} for projective canonical varieties
with numerically trivial canonical class and dimension at most four,
\cite{Graf} for compact
K\"ahler threefolds of Kodaira dimension zero, and
\cite{CGGN} for compact K\"ahler spaces with klt
singularities, vanishing first Chern class and dimension at most four.
The study of the fundamental group of the regular part of Calabi-Yau varieties is substantially harder. 
Greb-Kebekus-Peternell \cite{GKP} established
extension results for finite \'etale covers of regular loci, while
\cite{GGK} imposes strong restrictions on the
linear representations of the fundamental group of the regular
locus when the augmented irregularity vanishes. The object of the
present paper is the ordinary topological fundamental group itself. In complex
dimension three, Braun-Figueroa \cite{BF} prove that
$\pi_1(X_{\reg})$ is almost abelian. To our knowledge, almost
abelianity of $\pi_1(X_{\reg})$ remains largely open in higher
dimensions.

Singularities present a genuine difficulty for the differential
geometric argument. The singular Ricci-flat currents constructed by
Eyssidieux-Guedj-Zeriahi \cite{EGZ} are smooth on the regular locus,
but this locus is in general incomplete. Without control of the
metric boundary, neither the classical splitting theorem nor a
global integration by parts argument can be applied there.
Moreover, the universal cover of the regular locus need not extend
as an unramified cover of the whole variety. It is therefore
necessary to understand both the metric completion and the local
monodromy around the singular set.

The geometric theory of complex Monge-Amp\`ere equations provides
the analytic tools for this approach. Early developments include
Song's study of the Riemannian geometry of K\"ahler-Einstein currents
\cite{SongKE}, which identified the metric completions with the
underlying projective varieties in the Calabi-Yau and canonically
polarized settings with crepant singularities. Fu-Guo-Song
\cite{FGSGeometric} established uniform gradient and diameter
estimates for geometric complex Monge-Amp\`ere equations, with
applications to singular solutions and collapsing families of twisted
K\"ahler-Einstein metrics. These works initiated a systematic study
of the metric geometry determined by degenerate Monge-Amp\`ere
equations. Guo-Phong-Song-Sturm
\cite{GPSSDiameter,GPSSDiameterII,GPSS} subsequently established
diameter and Sobolev estimates under entropy bounds, providing
analytic tools on singular K\"ahler spaces.

Sz\'ekelyhidi \cite{Szekelyhidi} proved that singular
K\"ahler-Einstein metrics define RCD spaces under a constant scalar
curvature approximation hypothesis. Fu-Guo-Song \cite{FGS}
obtained the RCD structure in complex dimension three. Related RCD
criteria under regularization hypotheses appear in
\cite{GS} and
\cite{CCHSTT}.
Sz\'ekelyhidi \cite{SzekelyhidiCollapsing} further
identified the metric completion with the underlying complex space
under the stated analytic and RCD hypotheses, with applications to
collapsing Calabi-Yau fibrations. More recently, Guo-Song-Sturm \cite{GSS} obtained the required RCD
structure in all dimensions by means of Ricci entropy estimates. This last result allows the comparison and splitting methods of
Riemannian geometry to be applied to singular Calabi-Yau spaces.

The relevant non-smooth tools include Gigli's differential calculus
and splitting theorem \cite{GigliCalculus,Gigli}, the splitting
principle of Gigli-Marconi \cite{GM}, and the covering space theory
of Mondino-Wei \cite{MW}. Honda-Sun \cite{HS} give criteria for
extending a lower bound on the Ricci curvature from an almost smooth
space to its completion. Together, these results provide the
analytic framework for the proofs below. A particular feature of our
setting is that the K\"ahler structure allows us to upgrade a real
RCD splitting to a holomorphic metric product.

\bigskip

Throughout this paper, a \emph{projective Calabi-Yau variety} is a
connected normal projective variety $X$ over $\C$ with klt
singularities and $K_X\equiv0$. No condition is imposed on its
irregularity. By abundance in numerical dimension zero, we have
$K_X\sim_{\mathbb Q}0$ \cite{Gongyo}. We fix an ample line bundle $L$
and let $\omega\in c_1(L)$ be its singular Ricci-flat K\"ahler metric
\cite{EGZ}. This current has bounded local potentials, and it is
smooth and Ricci-flat on the analytic regular locus $X_{\reg}$.
Wedge products of currents with bounded local potentials are
understood in the sense of Bedford-Taylor \cite{EGZ}.
By \cite{GSS}, the completion of its intrinsic
distance is canonically homeomorphic to $X$, and, equipped with the
measure $\mu=\omega^n/n!$, it is an $\RCD(0,2n)$ space, where
$n=\dim_\C X\ge1$. All fundamental groups in this paper are ordinary
topological fundamental groups. A zero-dimensional factor $Y$ below
is understood to be a point.

Let $\pi:\tX\to X$ be the ordinary universal cover, endowed with the
lifted complex structure, length metric and measure. Our first result
is a complex splitting theorem for the K\"ahler RCD spaces arising
in this setting, which is inspired by Cheeger-Colding-Tian \cite{CCT} in the Ricci limit space case.

\begin{theorem}\label{thm:complex}
Let $X$ be an $n$-dimensional normal projective Calabi-Yau variety, equipped
with the singular Ricci-flat K\"ahler metric of a fixed polarization.
If the universal cover $\tX$ contains
a metric line, then there is a holomorphic and isometric  splitting
\begin{equation}\label{eq:complex-splitting}
 \tX\simeq\C\times Y,
\end{equation}
where $Y$ is a simply connected normal K\"ahler space with klt
singularities of dimension $n-1$.  
\end{theorem}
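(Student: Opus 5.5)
The plan is to run the Gigli splitting theorem on the lifted RCD space and then upgrade the real splitting to a holomorphic one using the K\"ahler structure on the regular part.

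\emph{Step 1: metric splitting.} By \cite{GSS}, $(X,d,\mu)$ is an $\RCD(0,2n)$ space. Using the covering theory of Mondino-Wei \cite{MW}, the universal cover $\tX$ with the lifted length metric and measure is again $\RCD(0,2n)$. Gigli's splitting theorem \cite{Gigli} applied to the metric line gives an isometry $\tX\simeq\R\times Z$, where $Z$ is $\RCD(0,2n-1)$. The splitting is realized by the Busemann function $b$, which is harmonic with $|\nabla b|=1$ and $\Hess b=0$ in Gigli's calculus \cite{GigliCalculus}. On $\tX_{\reg}=\pi^{-1}(X_{\reg})$, elliptic regularity shows that $b$ is smooth there, with $\nabla b$ a smooth parallel vector field.

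\emph{Step 2: complex structure.} On $\tX_{\reg}$ the vector field $J\nabla b$ is also parallel, since $J$ is parallel for a K\"ahler metric. Hence $\nabla b-\sqrt{-1}J\nabla b$ is a parallel holomorphic vector field. The aim is to integrate $J\nabla b$ to a second Busemann-type function $c$ with $\nabla c=J\nabla b$. Its dual $1$-form is closed and parallel on $\tX_{\reg}$. The complement of $\tX_{\reg}$ has real codimension $\ge2$, and I expect its removal not to change simple connectivity of $\tX$ in the relevant sense. Even without that, the closed form has bounded coefficients, and the Sobolev-to-Lipschitz property of RCD spaces lets one define $c$ on all of $\tX$ as a $1$-Lipschitz function with $\Hess c=0$.

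Then $J\nabla b$ generates a one-parameter group of isometries. This follows from the Gigli-Marconi splitting principle \cite{GM}: a function with vanishing Hessian and unit gradient on an RCD space splits off a line. The isometries preserve $\tX_{\reg}$ and, being flows of a real holomorphic Killing field, they are holomorphic there. By normality of $\tX$ they extend holomorphically across the singular set.

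\emph{Step 3: the product.} Applying the splitting again to $c$, now orthogonal to $b$, gives $\tX\simeq\R^2\times Y$ isometrically. The function $f=b+\sqrt{-1}c$ is holomorphic on $\tX_{\reg}$, since $\bar\partial f=0$ follows from $\nabla c=J\nabla b$. By normality and boundedness of its derivative, $f$ extends holomorphically to $\tX\to\C$. The map $f$ is a holomorphic submersion-type map whose fibers are $Y$, and the commuting holomorphic isometric flows give a biholomorphism $\C\times Y\to\tX$, $(z,y)\mapsto\Phi_z(y)$.

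$Y=f^{-1}(0)$ is a normal complex space of dimension $n-1$, as a fiber of a flat family with normal total space and trivialized structure. It is K\"ahler with the restricted metric. It has klt singularities because $\tX$ is locally $\C\times Y$ and klt is preserved under taking such factors, via inversion of adjunction or directly by comparing discrepancies of products. It is simply connected since $\pi_1(\tX)=\pi_1(Y)$ and $\tX$ is simply connected.

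\emph{Main obstacle.} I expect the hardest point to be Step 2: showing that $J\nabla b$ is the gradient of a globally defined function with vanishing Hessian on the whole RCD space, and not merely on the incomplete regular locus. This requires controlling the singular set, which has high codimension and zero capacity, so that integration by parts and the Bochner identity extend across it. I would first try to prove $|\nabla c|\le1$ and $\Delta c=0$ weakly on $\tX$ via cutoff functions of small capacity near $\pi^{-1}(X\setminus X_{\reg})$, in the spirit of \cite{HS}.
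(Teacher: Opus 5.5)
Your Step 2 has a genuine gap at its first move: the existence of a function $c$ on $\tX$ with $\nabla c=J\nabla b$ on $\tX_{\reg}$.

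You justify this by saying that removing the singular set does not change simple connectivity. That is false here, and no codimension count can give it, because transversality arguments work in a manifold and fail at singular points. For example, let $X=E\times K$ with $E$ an elliptic curve and $K$ a singular Kummer surface. Then $\tX=\C\times K$ contains lines, while $\pi_1(\tX_{\reg})=\pi_1(K_{\reg})\simeq\Z^4\rtimes\Z/2$ is infinite.

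What you actually need is that the periods of the closed form $(J\nabla b)^\flat$ vanish on $\pi_1(\tX_{\reg})$. Your fallback does not address this. The Sobolev-to-Lipschitz property upgrades a Sobolev function with bounded gradient to a Lipschitz one, but it cannot produce a primitive of a closed form. Integrating along curves in $\tX_{\reg}$ is path-independent exactly when those periods vanish.

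Since $\pi_1(\tX)=\{1\}$, the obstruction is local at the singular points. The paper removes it with an input your proposal lacks: by Fujiki's extension theorem, a bounded pluriharmonic function on a normal space is locally the real part of a holomorphic function, \emph{also on neighborhoods of singular points}. Apply this to $b$, which is pluriharmonic on $\tX_{\reg}$ because $\Hess b=0$ there, and is locally bounded. This gives local conjugates on an open cover of $\tX$ itself, whose differences are locally constant imaginary numbers. The resulting \v{C}ech class in $H^1(\tX,i\R)$ vanishes by simple connectedness and local contractibility of $\tX$, which produces a global holomorphic $f=b+ih$. Alternatively, Braun's finiteness of the local groups $\pi_1(U\cap\tX_{\reg})$ would kill the local periods, since a finite group has no nontrivial homomorphism to $\R$.

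With $f$ in hand, your remaining steps can be completed as in the paper:
\begin{enumerate}
\item $h$ is harmonic across the singular set, by capacity cutoffs and the bounded gradient.
\item $h\in H^{2,2}_{\loc}$, by Gigli's second-order regularity.
\item $\Hess h=0$ follows from the improved Bochner inequality $0=\frac12\Delta|\dd h|^2\ge|\Hess h|^2\mu$. You assert $\Hess c=0$ without giving an argument.
\end{enumerate}

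The paper also builds the $\C$-action differently. It integrates the derivation $\frac12(\nabla b-iJ\nabla h)$, which equals $\frac12(\nabla b-iJ\nabla b)$ up to the identification $\nabla h=J\nabla b$, using Kaup's local flow theorem. This gives holomorphic flows directly. It avoids your need to show that the Gigli--Marconi metric translations preserve the analytic regular locus and depend holomorphically on complex time.
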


For a normal projective variety $V$, we write $q(V)=h^1(V,\cO_V)$, and set
\begin{align}
 q'(X)&=\sup\{q(X'):X'\to X\text{ connected finite \'etale}\},
                                  \label{eq:etale-irregularity}\\
 \widetilde q(X)&=\sup\{q(X'):X'\to X\text{ connected finite quasi-\'etale}\}.
                                  \label{eq:quasi-irregularity}
\end{align}
Recall that a finite surjective map of normal varieties is said to be
quasi-\'etale if it is \'etale away from a subset of complex
codimension at least two in the target. The second invariant is the
augmented irregularity.

Our second result extends the classical structure theorem for smooth
Calabi-Yau manifolds directly to the whole singular space.

\begin{theorem} 
\label{thm:etale}
Let $X$ be a normal projective Calabi-Yau variety with klt singularities. Then there is a finite
\'etale cover
\[
 A\times Y\longrightarrow X,
\]
where $A$ is an abelian variety of dimension $q'(X)$ and
$Y$ is a projective Calabi-Yau variety with $\pi_1(Y)=\{1\}$. 
Moreover,
\[
 \tX\simeq\C^{q'(X)}\times Y
\]
holomorphically, and $\pi_1(X)$ contains a subgroup
of finite index isomorphic to $\Z^{2q'(X)}$.
\end{theorem}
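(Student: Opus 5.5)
We will iterate Theorem~\ref{thm:complex} on the universal cover and then run a Bieberbach-type argument on the deck group. Since $(X,d,\mu)$ is a compact $\RCD(0,2n)$ space, the lifted space $\tX$ is $\RCD(0,2n)$ by the covering theory of Mondino--Wei, and $\pi_1(X)$ acts on it cocompactly by measure-preserving isometries, which are holomorphic on the regular part.

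\textbf{Step 1: splitting the universal cover.} Repeatedly applying Gigli's splitting theorem gives a metric decomposition $\tX\simeq\R^k\times Y_0$ in which $Y_0$ contains no line. Because the action is cocompact, $Y_0$ is compact: a noncompact $Y_0$ would contain a ray, and translating that ray by deck transformations would produce a line. Theorem~\ref{thm:complex}, applied to each line and iterated on the factor $Y$ (whose klt and K\"ahler structure we keep), shows that the Euclidean factor is complex. So $k=2m$ and $\tX\simeq\C^m\times Y$ holomorphically and isometrically, with $Y$ compact, normal, klt and simply connected.

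\textbf{Step 2: the deck group.} Every isometry preserves the splitting, so $\pi_1(X)\subset \mathrm{Isom}(\C^m)\times\mathrm{Isom}(Y)$. The group $\mathrm{Isom}(Y)$ is compact Lie, and its holomorphic subgroup is compact; moreover $H^0(Y,T_Y)=0$, since holomorphic vector fields on a compact klt Ricci-flat space are parallel and would split off a line. Hence the image of $\pi_1(X)$ in $\mathrm{Aut}(Y)$ is finite. The projection to $\mathrm{Isom}(\C^m)$ has discrete cocompact image, because $Y$ is compact. By Bieberbach it contains a finite-index lattice of translations $\Lambda\cong\Z^{2m}$. Passing to a finite-index subgroup $\Gamma$ that acts trivially on $Y$ and by translations on $\C^m$, we obtain the finite \'etale cover $(\C^m/\Lambda)\times Y=A\times Y\to X$. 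Here $A$ is a complex torus, and it is projective because $A\times Y$ is finite over the projective $X$. The factor $Y$ is projective Calabi--Yau with $\pi_1(Y)=1$. It also follows that $\pi_1(X)\supset\Gamma\cong\Z^{2m}$ with finite index.

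\textbf{Step 3: identifying $m=q'(X)$.} First, $q(A\times Y)=m+q(Y)$, and $q(Y)=0$: by Hodge symmetry on klt spaces, $H^1(Y,\cO)$ injects into $\overline{H^0(Y,\Omega^{[1]})}$, and reflexive $1$-forms are parallel (Bochner principle), which forces $b_1(Y)>0$ and contradicts $\pi_1(Y)=1$. Thus $q'(X)\ge m$. Conversely, any finite \'etale cover $X'\to X$ is a quotient $\tX/\Gamma'$ with $\Gamma'$ of finite index. It is therefore dominated by some cover $A'\times Y$ of the same type, and $q$ does not decrease under finite covers, so $q(X')\le q(A'\times Y)=m$.

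The main obstacle is Step 1's iteration together with the finiteness of the action on $Y$. One has to check that the factor $Y$ produced by Theorem~\ref{thm:complex} again satisfies its hypotheses: it must carry a K\"ahler Ricci-flat RCD structure that is the universal cover of something compact. The cleanest route is to split off all lines at once by Gigli's theorem and then apply the holomorphic upgrade of Theorem~\ref{thm:complex} line by line. The rigidity $H^0(Y,T_Y)=0$ needs the Bochner principle on klt spaces.
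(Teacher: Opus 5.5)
Steps 2 and 3 of your proposal follow the paper's argument: finiteness of the holomorphic isometry group of $Y$ from $H^0(Y,\mathscr T_Y)=0$, then Bieberbach, then bounding $q$ of an arbitrary finite \'etale cover through a common cover $A'\times Y$. The gap is in Step 1. The assertion that Theorem~\ref{thm:complex}, ``applied to each line'', makes the Euclidean factor complex is exactly what has to be proved, and your closing paragraph leaves it open. Let $b$ be a unit coordinate of $\R^k$ in the maximal splitting $\tX=\R^k\times Y_0$. Theorem~\ref{thm:complex} then gives a conjugate $h$ with $\nabla h=J\nabla b$ and a holomorphic splitting $\tX\simeq\C\times Y'$. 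Nothing in that theorem says that $\nabla h$ is tangent to the $\R^k$-factor rather than having a component along $Y_0$. Without this, you cannot conclude that $k$ is even. Nor can you conclude that the separate splittings obtained from different coordinates combine into a single holomorphic product $\C^m\times Y$ with $Y$ a complex slice isometric to $Y_0$. Iterating on $Y'$ instead is not covered by Theorem~\ref{thm:complex} as stated, as you note yourself: $Y'$ is not known to be the universal cover of a projective Calabi--Yau variety.

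The missing idea, which is the content of Proposition~\ref{sp:prop:maximal}, is that lines in $\R^k\times Y_0$ with $Y_0$ compact are horizontal.
\begin{itemize}
\item The flow lines of $\nabla h$ are lines in $\tX$ along which $h$ increases at unit speed; in the splitting $\C\times Y'$ they are $t\mapsto(z_0+it,y')$.
\item Since $Y_0$ contains no line, every line in $\R^k\times Y_0$ has the form $t\mapsto(a_0+tv,y_0)$.
\item Compute the Busemann function of this line once in each splitting. This gives $h(a,y)-h(a_0,y_0)=\langle v,a-a_0\rangle$, so $h$ does not depend on the $Y_0$-variable.
\end{itemize}
Hence $J$ maps the $k$-dimensional space of Euclidean translation fields to itself and acts there as an orthogonal complex structure, so $k=2m$. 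A $J$-adapted orthonormal basis gives holomorphic functions $f_j=b_j+ih_j$. Their complete, commuting translation flows form a holomorphic $\C^m$-action, and the fibre $\{f_1=\dots=f_m=0\}$ yields $\tX\simeq\C^m\times Y$ holomorphically, with $Y$ isometric to $Y_0$. With this step inserted, the rest of your proof goes through.

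A minor point in Step 3: parallelism of reflexive $1$-forms is not what produces $b_1>0$. You need either extension of reflexive forms to a resolution plus the comparison of fundamental groups under resolution for klt spaces, or simply the identity $q(Y)=\tfrac12 b_1(Y)=0$ that the paper quotes from Claudon--Graf--Guenancia--Naumann.
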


In fact, Guo-Song-Sturm have already proved that $\pi_1(X)$ is
almost abelian \cite{GSS} based on the results of Fu-Guo-Song-Wang \cite{FGSW}. The proof of
Theorem~\ref{thm:etale} provides in addition the holomorphic and isometric product structure,
and identifies the rank through finite \'etale covers. Once
Theorem~\ref{thm:complex} and the structure theorem for compact RCD
spaces are available, its proof follows the argument in the smooth
case.

Our main result concerns the fundamental group of the regular locus,
whose study requires a further analysis of the completed universal
cover.

\begin{theorem} 
\label{thm:decomposition}
Let $X$ be a normal projective Calabi-Yau variety with klt singularities. Then there is a finite
quasi-\'etale cover
\[
 A\times Y\longrightarrow X,
\]
where $A$ is an abelian variety and $Y$ is a normal projective variety
with canonical singularities, satisfying
\[
 \dim A=\widetilde q(X),\qquad K_Y\sim0,\qquad
 \widetilde q(Y)=0,\qquad \pi_1(Y_{\reg})=\{1\}.
\]
Furthermore, $\pi_1(X_{\reg})$ contains a subgroup of finite index
isomorphic to $\Z^{2\widetilde q(X)}$.
\end{theorem}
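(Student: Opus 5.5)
We plan to reduce Theorem~\ref{thm:decomposition} to a statement about the universal cover of the regular locus, and to treat that cover with the same RCD and complex splitting machinery behind Theorems~\ref{thm:complex} and~\ref{thm:etale}.

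Let $p:\widehat X_\reg\to X_\reg$ be the universal cover of $X_\reg$. Endow it with the pulled-back Ricci-flat K\"ahler metric, and let $\widehat X$ be its metric completion. The first step is to show three things about $\widehat X$: it is a normal complex space with klt singularities; $p$ extends to a branched covering $\widehat X\to X$ that is unramified over $X_\reg$; and $(\widehat X,d,\mu)$ is an $\RCD(0,2n)$ space. For the complex structure, we use the local structure of klt singularities. Near a point $x\in X_{\rm sing}$, a small Euclidean-type neighbourhood $U$ has $U_\reg$ with finite local fundamental group, since klt singularities have finite regional fundamental groups (Braun). So each connected component of $p^{-1}(U_\reg)$ is a finite cover of $U_\reg$. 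Such a finite cover extends, by normalization (Grauert--Remmert), to a finite quasi-\'etale cover of $U$, and that cover is again klt. We glue these local models to obtain $\widehat X$. We must then check that the completion of the lifted distance coincides with this analytic space. Here we use \cite{GSS} locally: the local models are quasi-\'etale covers, so the Ricci-flat current pulls back to a current of the same type. For the RCD property, $\widehat X_\reg$ is an incomplete Ricci-flat manifold whose singular set has codimension at least four in the real sense and carries a Sobolev/cut-off structure lifted from $X$. The Honda--Sun criterion \cite{HS} then gives $\RCD(0,2n)$ on the completion.

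Next we apply the Cheeger--Gromoll type splitting iteratively, together with Theorem~\ref{thm:complex} in the form valid for $\widehat X$. The proof of Theorem~\ref{thm:complex} uses only the RCD structure, the K\"ahler structure on the regular part, and klt local models, so it applies to $\widehat X$. This gives a holomorphic isometric splitting $\widehat X\simeq\C^k\times Z$ in which $Z$ contains no line. The deck group $\Gamma=\pi_1(X_\reg)$ acts isometrically and cocompactly, since $\widehat X/\Gamma=X$. Hence $Z$ is compact by the standard RCD argument. By the Bieberbach-type structure for cocompact isometric actions on $\C^k\times Z$ with $Z$ compact, $\Gamma$ has a finite-index subgroup $\Gamma_0$ isomorphic to $\Z^{2k}$ that acts by translations on $\C^k$ and trivially on $Z$. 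Here one uses that $\mathrm{Isom}(Z)$ is compact and that $\pi_1(Z_\reg)=1$, because $Z_\reg\times\C^k=\widehat X_\reg$ is simply connected. This gives the finite quasi-\'etale cover $\widehat X/\Gamma_0=A\times Z\to X$ with $A=\C^k/\Z^{2k}$, and $Y:=Z$ satisfies $\pi_1(Y_\reg)=1$. Projectivity of $A$ and $Y$ follows from projectivity of $A\times Y$, which is finite over $X$.

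It remains to identify $k$ and to get $K_Y\sim0$, canonical singularities, and $\widetilde q(Y)=0$. Since $\pi_1(Y_\reg)=1$, $Y$ has no nontrivial quasi-\'etale covers. The index-one cover of $K_Y$ is quasi-\'etale, so $K_Y$ is Cartier with $K_Y\sim0$. Because $K_Y$ is Cartier and $Y$ is klt, $Y$ has canonical singularities. Also $\widetilde q(Y)=q(Y)$, and $q(Y)=0$ because a nonzero holomorphic $1$-form on $Y_\reg$ would be parallel (Bochner principle, \cite{CGGN}) and would produce a line in $Y$. Finally, $\widetilde q(X)\ge q(A\times Y)=k$. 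Conversely, any quasi-\'etale $X'\to X$ is dominated by a quotient of $\widehat X$, and the Bochner principle shows that $q(X')$ is at most the dimension of the flat factor. Thus $k=\widetilde q(X)$.

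The main obstacle is the first step. One must show that the metric completion of $\widehat X_\reg$ is locally a finite quasi-\'etale cover of $X$, which needs the finiteness of local fundamental groups to be uniform along strata, and one must verify the RCD condition across the new singular set. We would first try to obtain the RCD property through local quasi-\'etale models and the local-to-global property of $\RCD$ spaces, rather than through \cite{HS} directly.
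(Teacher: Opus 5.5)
Your outline (Braun plus Grauert--Remmert for the analytic extension, identification with the metric completion, splitting plus Bieberbach, the index-one cover, a trace argument for $\widetilde q$) matches the paper. However, the step you flag as the main obstacle is a genuine gap, and neither of your suggestions closes it.

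Ricci-flatness on $M$ and zero capacity of $S=\widehat M\setminus M$ do not by themselves give $\RCD(0,2n)$. Criteria of Honda--Sun type extend the Bochner inequality across $S$ only for functions with bounded gradient, so that $|\nabla f|^2\in W^{1,2}$ and the cutoff errors vanish. The real content is a global bound $\|\nabla(1-\Delta)^{-1}v\|_\infty<\infty$ on the completion. For $X$ itself this is the main theorem of \cite{GSS}. It is proved by integrating Ricci entropy estimates over a compact resolution and running Moser iteration with a Sobolev inequality on a compact space. On a cover with possibly infinitely many sheets, none of this transfers directly.

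The local-to-global route fails for the same reason. The local models $V\to U$ are branched over singular points, so they are not locally isometric to $X$, and a local Bochner or $\RCD$ condition on them is exactly what is unknown. You could try to realize $V\to U$ inside a global finite quasi-\'etale cover of $X$, where \cite{GSS} would apply. But that requires the finite local images $G\subset\Gamma$ to inject into some finite quotient of $\Gamma$. This is a residual finiteness property known only a posteriori, as a consequence of the theorem.

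The missing idea is to work on a compact quotient even though $\Gamma$ may be infinite and not residually finite. A $\Gamma$-equivariant resolution of $\widehat M$ gives a compact orbifold $\Q=[Z/\Gamma]$, which carries Monge--Amp\`ere approximations $\omega_\eps$ with uniform Ricci entropy bounds. Pack the translates of $f=(1-\Delta)^{-1}v$ into $F=(F_\gamma)$. Then the sheet sum $s(x)=\sum_{y\in p^{-1}(x)}|\nabla f(y)|^2$ is a function on $X$, and the entropy bounds give $z=\sqrt{1+s}\in W^{1,2}(X)$. This lets $\Delta z\ge -A_v z$ extend weakly across $X\setminus X_{\reg}$. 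Moser iteration with the Sobolev inequality of $X$ then yields $\|\nabla f\|_\infty\le\|z\|_\infty<\infty$. Only after this do the capacity and distance cutoffs, together with exponential volume growth from the cocompact action, give $\RCD(0,2n)$.

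Two smaller points also need fixing.
\begin{enumerate}
\item A finite-index subgroup acting trivially on $Z$ requires the holomorphic isometry group of $Z$ to be finite. This comes from the vanishing of holomorphic vector fields via the Bochner principle of \cite{CGGN}; compactness of $\operatorname{Isom}(Z)$ alone is not enough.
\item A parallel $1$-form on the compact factor does not produce a line in $Y$, since a compact space contains no lines. Instead, use $\pi_1(Y)=\{1\}$ to integrate it to a function with constant gradient norm on a compact space, which forces it to vanish. Alternatively, cite $q(Y)=\tfrac12 b_1(Y)=0$.
\end{enumerate}
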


 We stress that the two covers need not have the same abelian factor:
the first is \'etale everywhere, including above the singular points,
while the second may ramify over the singular locus.   In particular, $\widetilde q(X)=0$ implies
that $\pi_1(X_{\reg})$ is finite.

The existence of an abelian factor of dimension $\widetilde q(X)$
is part of the earlier decomposition theory \cite{CGGN,HP,BGL}.
The new topological conclusion in Theorem~\ref{thm:decomposition} is
the simple connectedness of the regular locus of the complementary
factor. It determines the ordinary group $\pi_1(X_{\reg})$ up to
finite index, and the construction is compatible with the prescribed
Ricci-flat metric. Throughout the proofs, the Bochner principle of
\cite{CGGN} is used to control holomorphic vector fields on the
compact factor.

The key new idea beyond Theorem~\ref{thm:decomposition} is to work with the
metric completion of the universal cover of the regular locus,
\[
 p:M\longrightarrow X_{\reg},\qquad
 \widehat M=\overline{(M,p^*\omega)}.
\]
The main algebraic geometric input in this step is Braun's local
finiteness theorem \cite{Braun}. The finiteness of the local
algebraic fundamental group of a klt singularity was first proved by
Xu \cite{Xu}, and Tian-Xu \cite{TX} developed an inductive approach
to topological finiteness. Braun then proved that
$\pi_1(U\cap X_{\reg})$ is finite for sufficiently small
neighborhoods $U$. It follows that each component of
$p^{-1}(U\cap X_{\reg})$ has finite degree over $U\cap X_{\reg}$,
and the Grauert-Remmert extension theorem then produces a normal
analytic extension $\widehat M_{\mathrm{an}}$, which is locally
finite over $X$. We shall prove that
\[
 \widehat M\simeq\widehat M_{\mathrm{an}},\qquad
 (\widehat M,d_M,\mu_M)\text{ is }\RCD(0,2n).
\]
The first assertion identifies the metric and analytic topologies.
The second is the main analytic difficulty, since the covering may
have infinitely many sheets, and the required global estimates must
be established on its completion. A related differential geometric
approach to Braun's local finiteness theorem, combining the techniques
developed here with the inductive scheme of Tian-Xu \cite{TX}, is
discussed in the private notes of Fu-Song \cite{FuSo}. The present
proof uses Braun's theorem as an external input.

We obtain these estimates by constructing smooth approximations on
a compact orbifold resolution, and by applying the Ricci entropy
calculations of Guo-Song-Sturm \cite{GSS}. Summing over the sheets
allows us to control scalar elliptic estimates on the cover by
Sobolev estimates on the compact quotient. The resulting gradient
bounds and capacity cutoffs then yield the global Bochner
inequality, following the argument of Honda-Sun \cite{HS}. Once
$\widehat M$ is known to be RCD, applying our complex RCD version of Cheeger-Gromoll type splitting 
Theorem~\ref{thm:etale} to the resulting compact quasi-\'etale cover
yields Theorem~\ref{thm:decomposition}. 

Extensions to klt pairs, and to normal projective varieties with nef
anticanonical divisor and klt singularities in the sense of
\cite{FGSW}, will be treated in a separate note, together with the
case of compact K\"ahler Calabi-Yau spaces. 
For the latter, the local
finiteness is supplied by Fu-Ou \cite{FuOu}, who
extend Braun's theorem to germs of complex analytic klt
singularities.

The paper is organized as follows. In Section~\ref{sec:complex}, we
prove the complex splitting theorem by constructing a global
holomorphic conjugate of a real splitting coordinate.
Theorem~\ref{thm:etale} is proved in Section~\ref{sec:etale}. In
Sections~\ref{sec:completion}-\ref{sec:lifting}, we identify the two
completions, construct the orbifold approximations, and establish the
RCD structure on $\widehat M$. Finally, the proof of
Theorem~\ref{thm:decomposition} is completed in
Section~\ref{sec:decomposition}.

\bigskip
\noindent\textbf{Acknowledgements.}
The authors would like to thank Chenyang Xu for bringing the problem
of fundamental groups of singular Calabi-Yau spaces to their
attention. The main results were first obtained in complex dimension
three in April 2026, building on the work of Fu-Guo-Song \cite{FGS},
and were announced at the Oberwolfach workshop in June 2026. They
were extended to higher dimensions after the work of
Guo-Song-Sturm \cite{GSS} appeared. AI tools were used only for
exposition and verification.

\bigskip
\bigskip

\section{A complex splitting theorem}\label{sec:complex}

\subsection{Affine functions and the global conjugate}

Since complex spaces are locally contractible, $\tX$ exists as an
ordinary simply connected cover. It is normal and klt, and
$\tX_{\reg}=\pi^{-1}(X_{\reg})$. The lifted length metric is complete,
induces the analytic topology, and is the completion of the intrinsic
Riemannian metric on $\tX_{\reg}$. These properties lift from $X$ by
working in evenly covered neighborhoods and subdividing curves. The
covering and local-to-global results for RCD spaces then give
\[
 (\tX,d,\mu)\text{ is }\RCD(0,2n),
\]
see \cite[Section 2.2 and Lemma 2.18]{MW}. On $\tX_{\reg}$, the RCD
differential calculus agrees with that of the smooth lifted
Ricci-flat metric.

We recall two local facts: the analytic singular set has zero local
$W^{1,2}$-capacity \cite[Lemma 8.2]{GPSS}, and
$\omega\ge c\theta$ for a smooth background K\"ahler form $\theta$
\cite[Theorem 1.1]{CCHSTT}; see also
\cite[Sections 2.2 and 7.5]{GSS}.
Both statements lift to evenly covered neighborhoods. In particular,
a local holomorphic function has bounded metric gradient on smaller
neighborhoods, and is therefore locally Lipschitz. Its real and
imaginary parts are weakly harmonic across the singular set: it
suffices to insert capacity cutoffs in the smooth harmonic equation,
and the bounded gradient forces the cutoff error to tend to zero.
This also identifies their classical and Sobolev differentials
almost everywhere.

\begin{remark} \label{sp:rem:affine-holomorphic}
Since the metric is smooth on $\tX_{\reg}$, a locally Lipschitz
RCD harmonic function $u$ with $\Hess u=0$ is smooth there by
elliptic regularity, and its classical Hessian vanishes. The
K\"ahler identities then imply that $u$ is pluriharmonic and that
$\nabla u$ is a parallel real holomorphic vector field.
\end{remark}

\begin{proposition}[Gigli, Gigli-Marconi]
\label{sp:prop:real-splitting}
Let $(Z,d,m)$ be a complete $\RCD(0,N)$ space, $1\le N<\infty$,
with full support. Suppose that a real function
$b\in H^{2,2}_{\loc}(Z)\cap D(\Delta_{\loc})$ satisfies
\begin{equation}\label{sp:eq:real-affine}
 |\dd b|=1,\qquad \Delta b=0,\qquad \Hess b=0.
\end{equation}
Then there exist a complete metric-measure space $(W,d_W,m_W)$ and
a measure-preserving isometry
\[
 (Z,d,m)\simeq(\R\times W,d_{\mathrm{prod}},\LL^1\otimes m_W)
\]
under which the $1$-Lipschitz representative of $b$ is the real
coordinate: $b(t,w)=t$.
\end{proposition}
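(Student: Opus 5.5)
This is a cited result: Gigli's splitting theorem in the form of Gigli–Marconi. The plan is to reconstruct the proof in Gigli's framework, taking $b$ itself as the splitting function, instead of producing it from a line via Busemann functions.

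\emph{Step 1: the gradient flow.} Since $\Hess b=0$ and $|\dd b|=1$, the gradient $\nabla b$ is a Sobolev vector field with vanishing covariant derivative. Its divergence is $\Delta b=0$. By the Ambrosio–Trevisan theory of regular Lagrangian flows on $\RCD$ spaces, it generates a flow $F_t$. Because the symmetric part of $\nabla(\nabla b)$ vanishes and the field is divergence free, each $F_t$ preserves $m$. Gigli's argument shows that $F_t$ has a representative that is a measure-preserving isometry of $Z$. The key identity is that for test functions $f$, the quantity $|\dd (f\circ F_t)|^2\circ F_{-t}$ is constant in $t$. Differentiating in $t$ produces $\Hess b(\nabla f,\nabla f)=0$. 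Sobolev-to-Lipschitz then upgrades the resulting a.e.\ $1$-Lipschitz maps to global isometries.

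\emph{Step 2: level sets and the map.} Take the $1$-Lipschitz representative of $b$. Along flow lines we have $b(F_t(z))=b(z)+t$, since $\frac{d}{dt}b\circ F_t=|\dd b|^2=1$. Set $W=b^{-1}(0)$. Define $\Phi:\R\times W\to Z$ by $\Phi(t,w)=F_t(w)$. Then $\Phi$ is a bijection with inverse $z\mapsto (b(z),F_{-b(z)}z)$.

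\emph{Step 3: product structure.} Put on $W$ the distance induced from $Z$, and the measure $m_W$ obtained by disintegrating $m$ along $b$. By Fubini and the invariance of $m$ under $F_t$, this disintegration is $t$-independent. To show that $d$ is the Pythagorean product distance, I would use the first-order splitting of Gigli. The cotangent module decomposes orthogonally as $\dd b$ plus its complement, and the complement is invariant under $F_t$. Hence $|\dd f|^2=|\partial_t f|^2+|\dd_W f|^2$, and one integrates along curves. Gigli–Marconi reduce the full isometry to exactly this infinitesimal orthogonality combined with the isometric flow. Completeness of $W$ follows because $W$ is closed in $Z$.

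\emph{Main obstacle.} The hardest step is the upgrade from a.e.\ statements to pointwise ones. One must show that the regular Lagrangian flow consists of genuine isometries defined everywhere, and that the distance, not only the Cheeger energy, splits. I would follow Gigli's strategy via Sobolev-to-Lipschitz, applied to the functions $d(\cdot,z)\circ F_t$, together with the Gigli–Marconi criterion.
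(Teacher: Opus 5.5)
The paper does not reprove this statement. It applies \cite[Theorem~3.3]{GM} with vanishing Laplacian and Hessian coefficients, so the warping function is identically one and the warped product is a plain product. You instead reconstruct Gigli's splitting argument directly from $b$, skipping the Busemann stage; this is legitimate, since harmonicity and $\Hess b=0$ are hypotheses here. Steps~1--2 are a sound outline. Divergence-freeness gives invariance of $m$, $\Hess b=0$ gives $|\dd(f\circ F_t)|=|\dd f|\circ F_t$, and Sobolev-to-Lipschitz yields a group of isometries with $b\circ F_t=b+t$.

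Two technical corrections to these steps. First, $b$ is only in $H^{2,2}_{\loc}$, and $|\nabla b|\equiv1$ is not in $L^2$ when $m(Z)=\infty$. You therefore need a localized form of the regular Lagrangian flow theory, for instance by cutting off $\nabla b$ and using that flow lines have unit speed. Second, you should not disintegrate $m$ along $b$. The push-forward $b_\#m$ can give infinite mass to every interval; this already happens for $Z=\R^2$ with $b=x_1$. Set instead $m_W(A)=m(\Phi([0,1]\times A))$ and use $F_t$-invariance.

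The genuine weak point is Step~3. An isometric flow with $b\circ F_t=b+t$ and $|\dd b|=1$ does not force the Pythagorean distance: the $\ell^\infty$-plane with $b=x_1$ has all these properties. So the Hilbertian structure must be brought to the level of distances, and ``integrate along curves'' does not do this. The orthogonal splitting of the cotangent module is an a.e.\ statement about Sobolev functions, and it says nothing directly about the metric speed of a given curve. Going through Sobolev-to-Lipschitz instead would require that property, together with tensorization of the Cheeger energy, on $\R\times W$, about which you know nothing yet.

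You can close the gap with your own ingredients as follows.
\begin{enumerate}
\item The first variation formula, together with the affinity of $b$ along geodesics (a consequence of $\Hess b=0$ via second-order calculus), gives $\langle\nabla\tfrac12 d^2(\cdot,y),\nabla b\rangle=b-b(y)$ a.e.
\item Integrating along flow lines and using continuity yields $d^2(F_tx,y)=d^2(x,y)+2t\,(b(x)-b(y))+t^2$ for all $x,y$.
\item Taking $t=b(y)-b(x)$ and using that the $F_s$ are isometries gives $d^2(x,y)=(b(x)-b(y))^2+d^2(F_{-b(x)}x,F_{-b(y)}y)$. This is exactly the product distance on $\R\times b^{-1}(0)$.
\end{enumerate}
Without such an argument, your fallback to the Gigli--Marconi criterion is simply the paper's citation, and Steps~1--3 become redundant. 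Note also that the hypotheses of that criterion are the conditions on $b$ themselves, not ``orthogonality plus an isometric flow''.
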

\begin{proof}
This is \cite[Theorem 3.3]{GM} with both the Laplacian and Hessian
coefficients equal to zero, in which case the normalized warping
functions are identically one.
\end{proof}

By Gigli's splitting theorem \cite{Gigli}, a line in $\tX$ gives rise
to a real coordinate $b$ satisfying \eqref{sp:eq:real-affine}. The
same identities hold for the first coordinate of a prescribed real
splitting. We fix such a function $b$.

\begin{lemma} \label{sp:lem:conjugate}
There is a holomorphic function $f=b+ih$ on $\tX$ such that $h$ is
$1$-Lipschitz and
\begin{equation}\label{sp:eq:conjugate-global}
 |\dd h|=1,\qquad\Delta h=0,\qquad\Hess h=0,
 \qquad\langle\nabla b,\nabla h\rangle=0.
\end{equation}
On $\tX_{\reg}$, we have $\nabla h=J\nabla b$, and both gradient
fields extend as real holomorphic derivations on $\tX$.
\end{lemma}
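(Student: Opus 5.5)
We construct $h$ first on $\tX_{\reg}$ and then extend it across the singular set. On $\tX_{\reg}$ the metric is smooth Ricci-flat Kähler. Since $b$ satisfies \eqref{sp:eq:real-affine} and is locally Lipschitz, Remark~\ref{sp:rem:affine-holomorphic} shows that $b$ is smooth and pluriharmonic there, and that $\nabla b$ is a parallel real holomorphic vector field. Put $V=J\nabla b$. Because $J$ is parallel, $V$ is parallel, so $V$ is a Killing field with $|V|=1$ and $\langle\nabla b,V\rangle=0$.

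The $1$-form $\dd^c b=-\dd b\circ J$ is closed on $\tX_{\reg}$, since $\dd\dd^cb=2\ddc b=0$ by pluriharmonicity. We want a global primitive $h$ with $\dd h=\dd^c b$. Then $\nabla h=J\nabla b$, the function $f=b+ih$ is holomorphic on $\tX_{\reg}$, and $h$ inherits $|\dd h|=1$, $\Hess h=0$, $\Delta h=0$ and $\langle\nabla b,\nabla h\rangle=0$ pointwise on the regular part.

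\textbf{Main obstacle.} $\tX_{\reg}$ need not be simply connected, so the periods of $\dd^cb$ must be killed using local data at singular points. Fix $x\in\tX$ and a small Stein contractible neighborhood $U$ (a cone-like neighborhood). Since $\tX$ is normal, $H^1(U,\cO)=0$. The function $b|_U$ is pluriharmonic on $U\cap\tX_{\reg}$ and locally Lipschitz on $U$. We claim that $b$ is the real part of a holomorphic function on $U$. First try the following. On the normal space $U$, a continuous function that is pluriharmonic on the regular part is pluriharmonic in the sense of being locally the real part of a holomorphic function. To prove this, pass to a resolution $\sigma:\widehat U\to U$. The pullback $\sigma^*b$ is continuous and pluriharmonic off the exceptional set, hence pluriharmonic everywhere, since a bounded function that is pluriharmonic off an analytic set extends as a pluriharmonic function. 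Shrinking $U$ so that $\widehat U$ has $H^1(\widehat U,\R)=0$ is problematic. Instead we use the fact that klt singularities are rational, $R^1\sigma_*\cO=0$, together with the exact sequence $0\to\R\to\cO\to\mathcal{PH}\to0$ on $\widehat U$, to find a holomorphic $g$ on $\widehat U$ with $\operatorname{Re}g=\sigma^*b$. Normality then descends $g$ to $U$.

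It follows that $\dd^c b$ is exact on each $U\cap\tX_{\reg}$. The local primitives $h_U$ therefore exist and are continuous on $U$. On overlaps they differ by constants, which defines a class in $H^1(\tX,\R)$. This class vanishes because $\tX$ is simply connected, so the $h_U$ glue to a global $h$ with $f=b+ih$ holomorphic on $\tX$. Along the way we use that $U\cap\tX_{\reg}$ is connected, by normality.

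The remaining properties are routine. Being holomorphic, $f$ is locally Lipschitz, and $h$ is weakly harmonic across the singular set by the capacity argument already recorded. The condition $|\dd h|=1$ holds a.e., and $\tX_{\reg}$ has full measure, so it holds in the RCD sense. For $\Hess h=0$ we use the capacity cutoffs: test the Bochner identity, or the defining integral identity of the Hessian in Gigli's calculus, against cutoff functions vanishing near the singular set. The errors tend to zero because $|\nabla h|$ is bounded and the singular set has zero capacity. This also gives $h\in H^{2,2}_{\loc}$. The $1$-Lipschitz property follows from $|\dd h|\le1$ on a geodesic RCD space via the Sobolev-to-Lipschitz property.

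Finally, $\nabla b$ and $\nabla h$ are real parts of the holomorphic vector fields dual to $\partial f$ and $\bar\partial f$. On $\tX_{\reg}$ they act on local holomorphic functions by derivations. For a local holomorphic function $g$, the derivative $\nabla b(g)=\langle\nabla b,\nabla g\rangle$ is holomorphic and locally bounded on the regular part, since $|\nabla g|$ is bounded by the estimate $\omega\ge c\theta$. It therefore extends holomorphically by normality (Riemann extension). This gives real holomorphic derivations of $\cO_{\tX}$.
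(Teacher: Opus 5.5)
\textbf{The gap is in the local step you flag as the main obstacle.} The rest of your outline follows the paper: make $b$ locally the real part of a holomorphic function near every point of $\tX$, glue the imaginary parts using simple connectedness of $\tX$, then read off the metric identities from the Cauchy--Riemann equations, capacity cutoffs and Sobolev-to-Lipschitz. Your route to $\Hess h=0$ differs slightly: you identify the RCD Hessian with the smooth one through cutoffs, whereas the paper uses Gigli's $H^{2,2}_{\loc}$ regularity and then the improved Bochner inequality $0=\tfrac12\Delta|\dd h|^2\ge|\Hess h|^2\mu$. Both are reasonable.

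The problem is the claim that rationality produces $g$ with $\operatorname{Re}g=\sigma^*b$. The long exact sequence of $0\to i\R\to\cO\to\mathcal{PH}\to0$ on $\widehat U$ reads
\[
 H^0(\widehat U,\cO)\to H^0(\widehat U,\mathcal{PH})\xrightarrow{\ \delta\ }H^1(\widehat U,i\R)\to H^1(\widehat U,\cO),
\]
and $\delta(\sigma^*b)$ is the period class of $\dd^c\sigma^*b$. Rationality gives $H^1(\widehat U,\cO)=0$ for Stein $U$. But this only says that $\delta$ is \emph{surjective} onto $H^1(\widehat U,i\R)$; it says nothing about the particular class $\delta(\sigma^*b)$. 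The inference therefore runs the wrong way. What you actually need is the vanishing of that period class, which is essentially the statement $H^1(\widehat U,\R)=0$ that you set aside as problematic.

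\textbf{How to repair it.} Any of the following works.
\begin{enumerate}
\item Choose $U$ so that $\widehat U$ deformation retracts onto the compact fiber $E=\sigma^{-1}(x)$. Since $b$ is continuous, $\sigma^*b$ is constant on $E$. Hence for every stratum $E_\alpha$ of $E$ (a complex submanifold) and every $v\in TE_\alpha$, we get $\dd^c\sigma^*b(v)=-\dd(\sigma^*b|_{E_\alpha})(Jv)=0$. Every loop in $E$ is homotopic to an edge path of a triangulation subordinate to the strata, so all periods vanish. Thus $\dd^c\sigma^*b$ is exact and $\sigma^*b$ is the real part of a holomorphic $g$ on $\widehat U$. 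This $g$ is constant on the connected compact fibers and descends by normality. No rationality is needed.
\item Use Braun's finiteness of $\pi_1(U\cap X_{\reg})$, which the paper uses later and which applies on $\tX$ since it is locally isomorphic to $X$. The period homomorphism $\pi_1(U\cap\tX_{\reg})\to\R$ of $\dd^cb$ is then trivial, and the resulting holomorphic function extends across the singular set by normality.
\item If you want to use rationality, you must add Hodge theory on a simple normal crossing $E$. The map $H^1(E,\R)\to H^1(E,\cO_E)$ is injective, and it factors through $H^1(\widehat U,\cO)=0$, which forces $H^1(\widehat U,\R)=0$. This is an extra input, not a consequence of the exact sequence.
\end{enumerate}
The paper bypasses all of this by citing Fujiki's extension theorem for bounded pluriharmonic functions on normal spaces.
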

\begin{remark}On a singular complex space, a derivation $D$ is a $\mathbb C$-linear map of sheaves satisfying
\[
D:\mathcal O_{\widetilde X}\longrightarrow\mathcal O_{\widetilde X},
\qquad
D(gu)=gD(u)+uD(g).
\]
\end{remark}
\begin{proof}
By Remark~\ref{sp:rem:affine-holomorphic}, $b$ is pluriharmonic on
$\tX_{\reg}$, and it is locally bounded on $\tX$. The extension of
bounded pluriharmonic functions on normal spaces
\cite[Corollary 2.2]{Fujiki} shows that it is locally the real part
of a holomorphic function, also near singular points. The
differences of these local functions are locally constant imaginary
numbers, and their Cech cohomology class in $H^1(\tX,i\R)$ vanishes by simple
connectedness and local contractibility. After adjusting constants,
they therefore glue to a global function $f=b+ih$.

On the smooth locus, the Cauchy-Riemann equations give
$\nabla h=J\nabla b$, so that $|\dd h|=1$ and the two gradients are
orthogonal. The argument for local holomorphic functions given above
extends harmonicity across the singular set, and the
Sobolev-to-Lipschitz property, together with the length-space
property, makes $h$ globally $1$-Lipschitz. Since $h$ is locally
bounded and Lipschitz and $\Delta h=0$, it is a local test function.
The local second-order regularity in \cite[Theorem 3.3.8]{GigliCalculus}
therefore gives $h\in H^{2,2}_{\loc}(\tX)$. The improved Bochner
inequality now gives, with the measure-valued Laplacian,
\[
 0=\tfrac12\Delta|\dd h|^2
       \ge |\Hess h|^2\,\mu,
\]
hence $\Hess h=0$. Thus Remark~\ref{sp:rem:affine-holomorphic}
applies to both functions. Finally, a holomorphic vector field on the
regular locus of a normal space extends as a derivation.
This proves the last assertion.
\end{proof}

\subsection{The holomorphic splitting}

\begin{proof}[Proof of Theorem~\ref{thm:complex}]
Let $f=b+ih$ be the function given by Lemma~\ref{sp:lem:conjugate}.
On the regular locus, $E=\nabla b$ and $H=\nabla h=JE$ are
orthonormal parallel real holomorphic vector fields. The holomorphic
vector field $V=\frac12(E-iH)$ extends as a derivation on $\tX$, and
$V(f)=1$ everywhere. By Kaup's local flow theorem
\cite[Satz 3]{Kaup} (see also \cite[Fact~4.2]{GKK}), $V$ admits a local holomorphic flow, whose real
and imaginary time directions are given by $E$ and $H$.

We claim that both real flows are complete. Indeed, their local maps
are biholomorphisms which preserve the regular locus, where the
trajectories have unit speed. Thus, for the $E$-flow,
\[
 d(\Phi_s x,\Phi_t x)\le |s-t|
\]
whenever the trajectory is defined between $s$ and $t$, and the same
bound holds for the $H$-flow. By continuity and density, these bounds
extend to trajectories through singular points. A finite maximal
endpoint would therefore have a limit in the complete space $\tX$.
Since the metric and analytic topologies agree, the local holomorphic
flow at that limit continues the trajectory, by uniqueness for
ordinary differential equations in a local analytic embedding.
This contradicts maximality, and proves the claim.

Denote the complete flows by $\Phi_s$ and $\Psi_t$.
Parallelness gives $[E,H]=0$ on the regular locus, hence everywhere
as derivations by normality. Their flows therefore commute and form
a holomorphic $\C$-action
$A_{s+it}=\Phi_s\Psi_t$. Since $E(f)=1$ and $H(f)=i$, we have
\[
 f(A_zx)=f(x)+z.
\]
In particular, $f$ is surjective to $\mathbb C$. We give $Y=f^{-1}(0)$ its analytic
fiber structure. The maps
\begin{equation}\label{sp:eq:product-inverse}
 F:(z,y)\longmapsto A_z(y),\qquad
 x\longmapsto\bigl(f(x),A_{-f(x)}x\bigr)
\end{equation}
are mutually inverse holomorphic maps. Thus $f$ is the projection of
a global holomorphic product $\tX\simeq\C\times Y$, with $b=\Re z$.
Normality and the klt property descend to $Y$.

On $\tX_{\reg}=\C\times Y_{\reg}$, the fields $E,H$ are orthogonal
to the fibers, and their flows preserve the metric. Hence
$F^*g=|\dd z|^2+g_Y$. Passing to intrinsic distances and metric
completions, we may identify $Y$ with the completion of
$(Y_{\reg},g_Y)$, and obtain
\begin{equation}\label{sp:eq:real-plane}
 (\tX,d,\mu)\simeq
 (\C\times Y,d_{\mathrm{prod}},\LL^2\otimes\mu_Y).
\end{equation}
The product identity also yields the displayed equality of volume
measures. The factor metric is smooth, K\"ahler and Ricci-flat on
$Y_{\reg}$. A lifted background K\"ahler form restricts to $Y$, and
since the product retracts onto $Y$, the latter is simply connected.
Applying the dimension drop in \cite{Gigli} successively to the two
real coordinate directions shows that $Y$ is $\RCD(0,2n-2)$ when
$n\ge2$. Of course when $n=1$, the factor is a point.
\end{proof}

\section{The fundamental group of the entire space}\label{sec:etale}

We will prove Theorem \ref{thm:etale} in this section. 

\subsection{Splitting structures}

The structure theorem for compact RCD spaces \cite[Theorem 1.3]{MW}
gives a splitting
\begin{equation}\label{sp:eq:mw-product}
 \tX=\R^r\times Y_0,
\end{equation}
where $Y_0$ is compact and RCD with nonnegative Ricci curvature.
Since $\tX$ is simply connected, $Y_0$ is
simply connected.

\begin{proposition}\label{sp:prop:maximal}
We have $r=2q$ for some $q\ge0$, and \eqref{sp:eq:mw-product} is a
holomorphic and isometric product
\begin{equation}\label{sp:eq:full-complex-product}
 \tX\simeq\C^q\times Y,
\end{equation}
where $Y$ is a compact simply connected normal K\"ahler space with
klt singularities and $c_1(Y)=0$.
\end{proposition}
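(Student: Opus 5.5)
The plan is to induct on $r$, using Theorem~\ref{thm:complex} as the inductive step. If $r=0$ there is nothing to prove: $\tX=Y_0$ is compact and simply connected, and it is normal, klt and K\"ahler as a cover of $X$. If $r\ge1$, a real line in the $\R^r$ factor of \eqref{sp:eq:mw-product} is a metric line in $\tX$. Theorem~\ref{thm:complex} then gives a holomorphic isometric splitting $\tX\simeq\C\times Y_1$. Its proof starts from a real coordinate $b$ of a prescribed splitting, and we take $b$ to be the first coordinate of $\R^r$. The conjugate coordinate $h$ of Lemma~\ref{sp:lem:conjugate} is another affine function with $|\dd h|=1$, $\Hess h=0$ and $\nabla h\perp\nabla b$.

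The key point is that the space of such affine directions on $\tX=\R^r\times Y_0$ is exactly $\R^r$. The gradient of an affine function with $\Hess=0$ splits off a line. Since $Y_0$ is compact, it contains no line, so $\nabla h$ must lie in the $\R^r$ directions. To see this, write the line determined by $h$ through the product structure. Its $Y_0$-component is a geodesic that is affine in the line parameter, and boundedness of $Y_0$ forces it to be constant. Thus $h$ is a linear coordinate on $\R^r$ orthogonal to $b$. Hence $r\ge2$, and $\R^2=\mathrm{span}(b,h)$ corresponds to the $\C$ factor, so $Y_1\simeq\R^{r-2}\times Y_0$ isometrically.

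Next we apply the same argument to $Y_1$. By the proof of Theorem~\ref{thm:complex}, $Y_1$ is a simply connected normal klt K\"ahler space, smooth and Ricci-flat on its regular locus, and $\RCD(0,2n-2)$. Applying Theorem~\ref{thm:complex} to it requires that theorem for $Y_1$ rather than for a universal cover of a projective variety. I expect this to be the main obstacle. The proof of Theorem~\ref{thm:complex} only used the RCD property, simple connectedness, normality, the capacity and lower-bound facts, and the local smooth Ricci-flat K\"ahler structure. These hold for the slice $Y_1=f^{-1}(0)$ by restriction, because $\tX=\C\times Y_1$ is a product. So the argument applies verbatim, the induction proceeds, and after $q$ steps we obtain $r=2q$ and $\tX\simeq\C^q\times Y$ holomorphically and isometrically, with $Y\simeq Y_0$ metrically. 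In particular $Y$ is compact and simply connected, and it is normal, klt and K\"ahler.

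Finally, for $c_1(Y)=0$ we restrict the lifted Ricci-flat metric to $Y_{\reg}$. Its Ricci form vanishes there, and $K_{\tX}$ is $\Q$-torsion since it is pulled back from $K_X\sim_{\mathbb Q}0$. The restriction of $K_{\tX}$ to a slice $\{0\}\times Y$ of a product is $K_Y$, so $K_Y$ is $\Q$-Cartier and numerically trivial, and hence $c_1(Y)=0$ in $H^2(Y,\R)$.
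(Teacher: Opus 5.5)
Your proof is correct, but it is organized differently from the paper's.

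\textbf{The paper's route.} The paper never leaves $\tX$. For every unit linear coordinate $b$ of the $\R^r$ factor, it takes the conjugate $h$ of Lemma~\ref{sp:lem:conjugate} on $\tX$ itself. It computes the Busemann function of the $h$-line, which gives $h(a,y)-h(a_0,y_0)=\langle v,a-a_0\rangle$. So $J$ preserves the $r$-dimensional space of Euclidean translation fields, and $r=2q$ follows at once, since that space then carries an orthogonal complex structure. It then pairs an orthonormal basis into $f_j=b_j+ih_j$. Their commuting translation flows form a holomorphic $\C^q$-action, and the compact factor is the fiber of $(f_1,\ldots,f_q)$ over $0$.

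\textbf{Your route.} You split off one $\C$ at a time, so you need Theorem~\ref{thm:complex} for the slices $Y_k$. These are not universal covers of projective varieties, and that is the extra cost of your approach. The proof of Theorem~\ref{thm:complex} already records three needed properties of the slice: simple connectedness, the restricted background K\"ahler form, and $\RCD(0,2n-2)$. You also need two more: zero local capacity of the singular set of $Y_k$, and agreement of the RCD and smooth calculus on its regular part. Capacity does not pass to a fixed slice by mere restriction. You can get it by Fubini over a disc in the $\C$-direction, choosing good slices and translating them back with the isometric $\C$-action.

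\textbf{Comparison.} At your second step you could instead apply Lemma~\ref{sp:lem:conjugate} on $\tX$ to a coordinate of $\R^r$ orthogonal to $b$ and $h$. That collapses your induction into the paper's argument and avoids any re-verification on slices, since every analytic input is then used only where it is directly available. Conversely, your route yields a complex splitting theorem for a class of spaces closed under slicing, and it avoids the linear algebra of $J$ on translation fields.

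\textbf{One step to write out.} Going from ``the $h$-line has constant $Y_0$-component'' to ``$h$ is linear in $a$ alone'' needs an explicit argument, not a bare ``thus''. Use the Busemann computation, or equivalently $h$ being $1$-Lipschitz while increasing at unit speed along the line in both directions.
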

\begin{proof}
Let $b$ be a unit linear coordinate of $\R^r$, and let $h$ be its
conjugate from Lemma~\ref{sp:lem:conjugate}. In the product
constructed in Theorem~\ref{thm:complex}, the complete holomorphic
flow of $h$ is a translation, and gives a line along which $h$
increases at unit speed. Since $Y_0$ is compact, every line in
$\R^r\times Y_0$ has the form $(a_0+tv,y_0)$. Computing the Busemann
function of this line in the two splittings, we find
\[
 h(a,y)-h(a_0,y_0)
 =\lim_{t\to\infty}\left(t-
       \sqrt{|a-a_0-tv|^2+d_{Y_0}(y,y_0)^2}\right)
 =\langle v,a-a_0\rangle.
\]
Thus $J$ preserves the space of Euclidean translation fields, and is
constant on that space. Being an orthogonal complex structure, it
forces $r=2q$. Pairing orthonormal real coordinates, we obtain
holomorphic coordinates $f_j=b_j+ih_j$, whose translations form a
holomorphic $\C^q$-action. The fiber of $(f_1,\ldots,f_q)$ at zero
then gives the product, via the same inverse maps as in
\eqref{sp:eq:product-inverse}. Metrically, the factor is $Y_0$. It
has the stated analytic properties by Theorem~\ref{thm:complex}, and
restricting $\pi^*c_1(X)=0$ to the slice gives $c_1(Y)=0$.
\end{proof}

We now record the vanishing properties of the compact factor needed
for descent. Since $Y$ is a compact simply connected K\"ahler space
with klt singularities, by \cite{CGGN} 
$$q(Y)=\frac12 b_1(Y)=0, \qquad H^0(Y,\Omega_Y^{[1]})=0.$$ The Bochner principle and the resulting
perfect pairing also gives $H^0(Y,\mathscr T_Y)=0$, c.f.,
\cite[Theorem~3.4 and the argument following (4.1.1)]{CGGN}.

Now we claim that the holomorphic isometry group $K$ of $Y$ is finite.
Since $Y$ is
a compact finite-dimensional $\RCD$ space, its metric isometry group
$\operatorname{Isom}(Y)$ is a compact Lie group by
\cite[Theorem~A]{GuijarroSantos}. The subgroup $K$ is closed:
uniform limits in local analytic embeddings are holomorphic, and
the same holds for the inverses. Hence $K$ is itself a compact Lie
group by the closed subgroup theorem. A nontrivial one-parameter subgroup of $K$
would give a nonzero real holomorphic vector field on $Y_{\reg}$ and
its $(1,0)$-part extends by normality, contradicting
vanishing of $H^0(Y,\mathscr T_Y)$. Thus $K$ is discrete and compact, hence
finite.
\subsection{Proof of Theorem~\ref{thm:etale}}
 
The argument is inspired by Cheeger-Gromoll \cite{CG}. Recall that we have holomorphically isometric splitting  $\widetilde X=\R^r\times Y$, with $Y$ compact. It follows from \cite{CG} that 
\[
\operatorname{Isom}(\mathbb R^{2q}\times Y)
\cong
\operatorname{Isom}(\mathbb R^{2q})\times\operatorname{Isom}(Y).
\] 

The kernel of the projection from $\pi_1(X)$ to the finite
holomorphic isometry group $K$ of $Y$ has finite index, and it acts
faithfully, properly and cocompactly on $\C^q$. By Bieberbach's
Theorem \cite{Bieberbach}, it contains a translation lattice
$\Lambda\simeq\Z^{2q}$ of finite index. Since this lattice acts
trivially on $Y$, we obtain
\[
 (\C^q\times Y)/\Lambda=A\times Y,\qquad A=\C^q/\Lambda.
\]
This gives a finite cover of $X$ by local biholomorphisms, which is
therefore \'etale also above the singular points. By finite Riemann
existence \cite[Expos\'e XII, Th\'eor\`eme 5.1]{SGA1}, it is algebraic, and it is
projective since the pullback of $L$ is ample. Its factors are then
projective, so that $A$ is an abelian variety and $Y$ is a projective
Calabi-Yau variety. The metric product descends from the universal
cover, and restriction to the factors, together with the uniqueness
in \cite{EGZ}, identifies the factor metrics with the Ricci-flat
metrics of the restricted polarizations. If $q=0$, the same argument
gives a finite \'etale cover $Y=\tX\to X$.

It remains to identify $q$. By vanishing of $q(Y)$, we have
$q(A\times Y)=q$, so that $q'(X)\ge q$. Conversely, let $X_1\to X$ be
a connected finite \'etale cover. A connected component $X_2$ of its
fiber product with $A\times Y$ is a finite \'etale cover of both.
Since $Y$ is simply connected, $X_2=A'\times Y$ for some isogeny
$A'\to A$, and thus $q(X_2)=q$. Since the trace map makes pullback
on $H^1(\cO)$ injective, we conclude that $q(X_1)\le q$. Taking the
supremum yields $q=q'(X)$. As the lattice $\Lambda$ has finite index
in $\pi_1(X)$, the proof is complete.

\section{The metric and analytic completions}\label{sec:completion}

Let $p:M\to X_{\reg}$ be the universal cover, with deck group
$\Gamma=\pi_1(X_{\reg})$. We equip $M$ with the smooth Ricci-flat
metric $p^*\omega$, its intrinsic length distance $d_M$, and its
Riemannian volume. We denote by $\widehat M$ its metric completion,
use $d_M$ also for the completed distance, and extend the volume
measure by zero on $S:=\widehat M\setminus M$, the resulting measure
is denoted by $\mu_M$. Note that $M$ covers $X_{\reg}$, whereas $\tX$
in Sections~\ref{sec:complex}-\ref{sec:etale} covers the whole space
$X$.

In this section, we identify the normal analytic extension
$\widehat M_{\mathrm{an}}$ with $\widehat M$, and establish its
Sobolev structure. The RCD condition will be proved in
Section~\ref{sec:lifting}.

\subsection{The Grauert-Remmert extension}
By Braun's theorem \cite[Theorem 1]{Braun},   every point of $X$
has a sufficiently small connected neighborhood $U$ such that
$U^\circ=U\cap X_{\reg}$ is connected and $\pi_1(U^\circ)$ is finite.
After a choice of base points, each component of $p^{-1}(U^\circ)$
is the finite Galois cover corresponding to the kernel of
\[
 \pi_1(U^\circ)\longrightarrow\Gamma.
\]
By the Grauert-Remmert theorem
\cite[Expos\'e XII, Th\'eor\`eme 5.4]{SGA1}, this component extends
uniquely to a finite normal map $V\to U$. By uniqueness on overlaps,
these finite extensions glue to a normal analytic space, together
with a map
\[
 \widehat p_{\mathrm{an}}:\widehat M_{\mathrm{an}}\longrightarrow X,
 \qquad \widehat p_{\mathrm{an}}^{-1}(X_{\reg})=M.
\]
see also \cite[Corollary 3]{Braun}. The map is \emph{locally finite},
in the sense that over each such $U$ it is a disjoint union of finite
maps. Since each $V\to U$ is quasi-\'etale, $V$ is klt, and the added
analytic set has complex codimension at least two.

The deck transformations extend uniquely. If $G\subset\Gamma$ denotes
the finite image of $\pi_1(U^\circ)$, the local description is
\begin{equation}\label{eq:local-induced}
 \widehat p_{\mathrm{an}}^{-1}(U)=\Gamma\times_G V,
 \qquad U=V/G.
\end{equation}
Here $\Gamma\times_G V$ denotes the quotient of $\Gamma\times V$
by $(\gamma,v)\sim(\gamma g^{-1},gv)$ for $g\in G$.
In particular, $\widehat M_{\mathrm{an}}/\Gamma=X$ analytically.

\subsection{The two completions are the same}\label{subsec:completion}
Since the projection preserves lengths locally, $p$ is $1$-Lipschitz,
and it extends to a continuous map $\widehat p:\widehat M\to X$.
\begin{proposition} \label{prop:completion}
The identity on $M$ extends uniquely to a $\Gamma$-equivariant
homeomorphism
\begin{equation}\label{eq:completion-map}
 \iota:\widehat M_{\mathrm{an}}\longrightarrow\widehat M,
 \qquad \widehat p\circ\iota=\widehat p_{\mathrm{an}}.
\end{equation}
Consequently, $\widehat M$ carries the normal klt structure of the
analytic extension, with $M\subset(\widehat M)_{\reg}$, and $S$ is
analytic of complex codimension at least two. Moreover, it is a
proper geodesic metric space. The extended deck action is proper and
cocompact with finite stabilizers, and acts by holomorphic
isometries. In particular, its metric quotient is $(X,d)$.
\end{proposition}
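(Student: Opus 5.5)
We will build $\iota$ locally over the neighborhoods $U$ supplied by Braun's theorem, and then glue using uniqueness and $\Gamma$-equivariance. Fix $U$ small, with $U^\circ$ connected and $\pi_1(U^\circ)$ finite, and let $V\to U$ be a finite normal extension of a component $V^\circ$ of $p^{-1}(U^\circ)$, as in \eqref{eq:local-induced}.

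\textbf{Step 1: extension of the identity.} The first step is to show that the identity $V^\circ\to M$ extends continuously to $V\to\widehat M$. The lifted metric on $V^\circ$ is the pullback of $\omega$ under a finite map. We use the estimate $\omega\le C\cdot(\text{pullback of a smooth form})$ in the sense of local length control. Concretely, we invoke the fact from \cite{GSS} that $(X,d)$ is homeomorphic to $X$, applied to the finite klt space $V$ with the pulled-back current; this is a Ricci-flat current with bounded potentials on a klt space, so the same identification holds locally. We then need:
\begin{itemize}
\item[(a)] the intrinsic distance of $V^\circ$ extends to a metric on $V$ inducing the analytic topology;
\item[(b)] the inclusion $V^\circ\to M$ is $1$-Lipschitz for intrinsic distances.
\end{itemize}
Together these give a continuous extension $V\to\widehat M$. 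Gluing over a cover by such $U$ gives $\iota$, which is $\Gamma$-equivariant by uniqueness and satisfies $\widehat p\circ\iota=\widehat p_{\mathrm{an}}$ by density of $M$.

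\textbf{Step 2: $\iota$ is a homeomorphism.} For injectivity, two points of $\widehat M_{\mathrm{an}}$ over the same $x\in X$ lie in different sheets $\gamma V$, or are distinct points of one fiber of $V\to U$. We show their $d_M$-distance is positive. Any curve in $M$ joining small neighborhoods of the two points either stays inside $p^{-1}(U^\circ)$, hence inside one component, or exits $U$ and so has length at least $\tfrac12 d(x,\partial U)$. In the first case we use that the completion of the intrinsic distance on $V^\circ$ is $V$ (Step 1(a)), which separates fiber points.

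For surjectivity, a $d_M$-Cauchy sequence projects to a Cauchy sequence in $X$ with limit $x$. Eventually it lies in $p^{-1}(U^\circ)$ with small distance, and by the same exit argument it lies in a single component $\gamma V^\circ$. There it converges in $V$. Continuity of the inverse follows from the same local comparison, since the two metrics agree up to the exit scale.

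\textbf{Step 3: the remaining properties.} The klt structure, $S$ analytic of codimension at least two, and $M\subset(\widehat M)_{\reg}$ are then transported from $\widehat M_{\mathrm{an}}$. Local finiteness gives local compactness, so by Hopf--Rinow $\widehat M$ is proper and geodesic. Properness of the action, finite stabilizers and cocompactness follow from \eqref{eq:local-induced} and compactness of $X$. The action is isometric because $\Gamma$ acts isometrically on $M$, and holomorphic by the unique extension of the deck transformations. Finally, the metric quotient is $(X,d)$: $\widehat p$ is $1$-Lipschitz, and curves in $X_{\reg}$ lift to $M$ with equal length, while curves through the singular set of $X$ can be pushed into $X_{\reg}$ with arbitrarily small length increase by the codimension-two property, as in \cite{GSS}.

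\textbf{Main obstacle.} We expect Step 1(a) to be the hardest step: showing that on the finite quasi-\'etale cover $V$ the intrinsic distance of the pulled-back Ricci-flat metric on $V^\circ$ completes to $V$ itself, compatibly with the analytic topology. We would first try to apply \cite{GSS} locally, after globalizing $V$ as a quasi-\'etale cover of a compact model, or else to repeat its argument locally.
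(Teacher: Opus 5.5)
There is a genuine gap, and it is the step you flagged yourself: Step~1(a). The identification in \cite{GSS} of the metric completion with the analytic space is a theorem about the compact variety $X$ with its global Ricci-flat current. It does not apply as stated to the non-compact local quasi-\'etale cover $V\to U$ with the pulled-back current, so the sentence ``the same identification holds locally'' is unjustified.

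Neither of your proposed repairs is routine.
\begin{enumerate}
\item To realize $V$ as a chart of a finite quasi-\'etale cover of $X$, you need a finite-index normal subgroup of $\Gamma=\pi_1(X_{\reg})$ that meets the finite image $G$ of $\pi_1(U^\circ)$ trivially. This is a residual-finiteness property of $\Gamma$, known only a posteriori from Theorem~\ref{thm:decomposition}.
\item Repeating the argument of \cite{GSS} locally is not carried out.
\end{enumerate}
The pointwise bound $\omega\le C\cdot(\text{smooth form})$ is also unavailable: in general only $\omega\ge c\theta$ holds, as the flat metric on $\mathbb C^2/\{\pm1\}$ already shows.

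You also use (a) to separate fiber points within one component and to obtain convergence in $V$ in the surjectivity step. So the whole proof rests on an unproved local comparison, which is essentially the content of the proposition.

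The missing idea is that nothing metric has to be redone on $V$. The base identification from \cite{GSS}, the finite degree $m$ of $V\to U$, and normality suffice.

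\textbf{Continuity of $\iota$.} Take $B_{4r}(x)\Subset U$ and $b\in B_r(x)\cap X_{\reg}$, with lifts $b_1,\dots,b_m\in V^\circ$. Every $z\in B_r(x)\cap X_{\reg}$ is joined to $b$ by a curve in $X_{\reg}$ of length $<3r$. Such a curve stays in $B_{4r}(x)$, so it lifts into $V^\circ$ with the same length. Hence $V^\circ\cap\widehat p_{\mathrm{an}}^{-1}(B_r(x))\subset\bigcup_a B^M_{3r}(b_a)$. For a connected normal neighborhood $W$ of a point $y$ over $x$ inside this set, $W\cap M$ is connected by normality. A connected set covered by $m$ balls of radius $3r$ has $d_M$-diameter at most $6mr$. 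This gives the continuous extension $\iota$ directly, and the same argument run with $d_{V^\circ}$ even proves your (a).

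\textbf{Injectivity.} Choose disjoint neighborhoods $W_a$ of the fiber over $x$. Properness of $V\to U$ gives $U'\ni x$ with $V\cap\widehat p_{\mathrm{an}}^{-1}(U')\subset\bigcup_a W_a$. Take $B_{2\delta}(x)\subset U'$. Then any curve in $M$ joining $W_a\cap M\cap p^{-1}(B_\delta(x))$ to $W_b\cap M\cap p^{-1}(B_\delta(x))$, $a\ne b$, must leave $p^{-1}(B_{2\delta}(x))$, so its length is at least $\delta$. Different sheets over $U$ are separated in the same way.

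\textbf{Surjectivity and continuity of $\iota^{-1}$.} A $d_M$-Cauchy sequence eventually lies in one sheet. Properness of $V\to U$ gives an analytic subsequential limit there, and continuity plus injectivity of $\iota$ identify it uniquely.

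\textbf{Quotient distance.} This needs no pushing of curves off the singular set. Since $d$ is by definition the completion of the intrinsic distance on $X_{\reg}$, the identity holds on $M$ by curve lifting, and it extends to $\widehat M$ by density and continuity.
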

\begin{proof} The proof is straightforward using \cite{GSS}, which establish the identification of the analytic space $X$ and the metric completion of $(X^{\circ},\omega)$. We divide the proof into several steps.

\emph{Step 1:} Fix a finite chart $V\to U$ of degree $m$, with
$B_{4r}(x)\Subset U$. This is possible by \cite{GSS} and $x$ can be a singular point in $U$. Choose $b\in B_r(x)\cap X_{\reg}$, and write
$p^{-1}(b)\cap V=\{b_1,\ldots,b_m\}$. For
$z\in B_r(x)\cap X_{\reg}$, the definition one may choose a curve $\gamma_{z,b}$ connecting $z$ and $b$ such that 
\[
 \gamma_{z,b}\subset X_{\reg},
 \qquad\ell(\gamma_{z,b})<3r,\qquad\gamma_{z,b}\subset B_{4r}(x).
\]
Lifting these curves, we obtain
\begin{equation}\label{eq:finite-ball-cover}
 V\cap M\cap\widehat p_{\mathrm{an}}^{-1}(B_r(x))
 \subset\bigcup_{a=1}^m B_{3r}^{M}(b_a).
\end{equation}Thus if $A$ is a connected subset  of the left-hand side, then
\begin{equation}\label{eq:diameter}
 \operatorname{diam}_{d_M}A\le6mr.
\end{equation}
For $y\in V$ over $x$, take connected normal neighborhoods
$W_r\subset V\cap\widehat p_{\mathrm{an}}^{-1}(B_r(x))$ of $y$.
By normality, $W_r\cap M$ is connected, and hence
\[
 \operatorname{diam}_{d_M}(W_r\cap M)\le6mr\longrightarrow0.
\]
Thus analytic convergence to $y$ determines a unique metric limit,
which defines the continuous extension $\iota$ in
\eqref{eq:completion-map}.

\emph{Step 2:} We show that the extended map $\iota$ is injective. To this end, fix a point $x\in U$ and locally finite map  $V\rightarrow U$ as in step 1. Also choose disjoint
neighborhoods $W_a$ of the finite fiber. By properness, there is
$U'\Subset U$ with
$V\cap\widehat p_{\mathrm{an}}^{-1}(U')\subset\bigcup_a W_a$.
Choose $B_{2\delta}(x)\subset U'$ and set
$A_a=W_a\cap M\cap p^{-1}(B_\delta(x))$. Any curve joining different
sets $A_a$ must project outside $B_{2\delta}(x)$, whence
\[
 d_M(A_a,A_b)\ge\delta\qquad(a\ne b).
\]
Different components over $U$ are separated by the same argument.
Thus $\iota$ is injective.

\emph{Step 3:} Next, let $(y_j)\subset M$ be a metric Cauchy sequence. By definition, $p(y_j)$ is also a metric Cauchy sequence converging to some point $x$ downstairs. So, for $j$ sufficiently large, $y_j$ are contained in a
finite chart $V\to U$.  Therefore, after passing to a subsequence,
\begin{equation}\label{eqn:uniy}
 y_{j_k}\longrightarrow y\text{ analytically},\qquad
 d_M(y_{j_k},\iota(y))\longrightarrow0.
\end{equation}
This proves surjectivity of $\iota$. Moreover, by the
injectivity of $\iota$, such kind of $y$ in \eqref{eqn:uniy} is unique and hence $\iota^{-1}$ is also continuous. This proves that $\iota$ is a homeomorphism.

\emph{Step 4:} Finally, we show that the metric quotient of $\hat M$ by $\gamma$ is isometric to $X$. we identify the two spaces via $\iota$. The completion of a
length space is a length space. Completeness and local compactness
therefore make $\widehat M$ proper and geodesic. The finite local
model \eqref{eq:local-induced} shows that the action is holomorphic
and proper with finite stabilizers, while compact lifts of a finite
cover of $X$ give cocompactness. The action is isometric by density
of $M$. Finally, we show that
\begin{equation}\label{eq:metric-quotient}
 d(\widehat p(y),\widehat p(z))
   =\inf_{\gamma\in\Gamma}d_M(y,\gamma z)=:\delta(z,y)
 \qquad(y,z\in\widehat M).
\end{equation}
By curve lifting, it is clear that the left hand side is less than the right hand side and if $(z,y)$ is a pair of points in $M$, then the identity holds. In general, when $(z,y)$ is in $\hat M$, the identity follows from the continuity of $\delta(z,y)$ and approximating $(z,y)$ by points in $M$.
\end{proof}

The smooth locus used below is $M$, note that it may be smaller than
the analytic regular locus $(\widehat M)_{\reg}$.

\subsection{Measure, capacity and the Sobolev space}
For a chart $V\to U$ of degree $m$, we have
\[
 \mu_M(V)=m\mu(U)<\infty.
\]
The measure has full support and is $\Gamma$-invariant.

We use the metric Sobolev space defined by relaxation of the local
Lipschitz slope, as in \cite[Definition 2.3]{HS}. Explicitly, set
\[
 \operatorname{Ch}(u)=\inf_{u_j\to u\text{ in }L^2}
   \liminf_{j\to\infty}\frac12
       \int_{\widehat M}(\operatorname{lip}u_j)^2\,\dd\mu_M,
\]
where the infimum is taken over
$u_j\in\Lip(\widehat M)\cap L^\infty\cap L^2$, and
$\operatorname{lip}u_j(x)=\limsup_{y\to x}
|u_j(y)-u_j(x)|/d_M(x,y)$. Then
\[
 \begin{gathered}
 W^{1,2}(\widehat M,d_M,\mu_M)
   =\{u\in L^2(\mu_M):\operatorname{Ch}(u)<\infty\},\\
 \|u\|_{W^{1,2}}^2=\|u\|_2^2+2\operatorname{Ch}(u).
 \end{gathered}
\]
This definition does not require an RCD hypothesis; the identification
with the smooth finite-energy space is proved below.

By \cite[Lemma 8.2]{GPSS}, we may choose
$\chi_j\in C_c^\infty(X_{\reg})$, eventually equal to one on
each compact subset, such that
\begin{equation}\label{eq:base-capacity}
 0\le\chi_j\le1,\qquad \chi_j\longrightarrow1
 \text{ locally on }X_{\reg},\qquad
 \int_X|\nabla\chi_j|^2\,\dd\mu\longrightarrow0.
\end{equation}
For $\eta\in\Lip_c(V)$, set $a_j=\eta(1-\chi_j\circ\widehat p)$.
Then
\[
 \int_M(a_j^2+|\nabla a_j|^2)\,\dd\mu_M
 \le C_\eta m\int_U\big((1-\chi_j)^2+|\nabla\chi_j|^2\big)\,\dd\mu
 \longrightarrow0.
\]
Taking $\eta=1$ near a prescribed compact subset of $S\cap V$,
and using finitely many charts, we obtain
\begin{equation}\label{eq:local-capacity}
 S\text{ has zero local }W^{1,2}\text{-capacity in }\widehat M.
\end{equation}
Here the functions $a_j$ are Lipschitz, so their Cheeger energies are
bounded by the integrals over the smooth locus displayed above.

\begin{lemma} \label{lem:form}
The Sobolev space
$W^{1,2}(\widehat M,d_M,\mu_M)$ coincides with 
\[
 \left\{u\in W^{1,2}_{\loc}(M):
       \int_M\bigl(|u|^2+|\nabla u|^2\bigr)\,\dd\mu_M<\infty\right\}.
\]

\end{lemma}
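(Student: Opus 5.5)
We prove two inclusions, writing $H$ for the space on the right-hand side. Since $\mu_M(S)=0$, functions in $L^2(\mu_M)$ are the same as functions in $L^2(M)$. The key local fact is that, on the open dense set $M$, the distance $d_M$ is locally the Riemannian distance of $p^*\omega$. Hence, for a Lipschitz function $u$ on $\widehat M$, the local slope $\operatorname{lip}u$ equals $|\nabla u|$ almost everywhere on $M$, by Rademacher's theorem in smooth charts.

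\emph{Inclusion $W^{1,2}\subset H$.} Let $u\in W^{1,2}(\widehat M)$, and take Lipschitz $u_j\to u$ in $L^2$ with $\frac12\int(\operatorname{lip}u_j)^2\,\dd\mu_M\to\operatorname{Ch}(u)$. On each relatively compact smooth chart of $M$, the restrictions are bounded in $W^{1,2}$, since $\operatorname{lip}u_j=|\nabla u_j|$ there. Weak compactness and lower semicontinuity then give $u\in W^{1,2}_{\loc}(M)$ with
\[
 \int_M|\nabla u|^2\,\dd\mu_M\le2\operatorname{Ch}(u).
\]
By exhaustion of $M$ this bound is global, so $u\in H$.

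\emph{Inclusion $H\subset W^{1,2}$.} Let $u\in H$. First truncate, replacing $u$ by $\max(-k,\min(u,k))$; this converges in the $H$-norm, and $\operatorname{Ch}$ is lower semicontinuous, so we may take $u$ bounded. Next cut off at infinity: since $\widehat M$ is proper, use $\phi_R(y)=\max(0,\min(1,R-d_M(y,o)))$, which has slope at most $1$ and supported where $d_M\le R$. The resulting error $\int|u|^2|\nabla\phi_R|^2$ tends to zero, so we may take $u$ with bounded support. Then remove the singular set using the capacity cutoffs $a_j$ from \eqref{eq:local-capacity}, built in finitely many charts covering the compact set $S\cap\operatorname{supp}u$. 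Put $\psi_j=1-a_j$. The functions $u\psi_j$ converge to $u$ in $L^2$ and $H$, because
\[
 \int|u|^2|\nabla a_j|^2\le\|u\|_\infty^2\int|\nabla a_j|^2\to0.
\]
Now $u\psi_j$ has compact support in $M$. Mollify it in finitely many charts of $M$ to get smooth functions $v$ with compact support in $M$, close to $u\psi_j$ in the $H$-norm. Extended by zero, such $v$ is Lipschitz on $\widehat M$: a curve in $\widehat M$ can be approximated by curves in $M$ of almost the same length, so $|v(x)-v(y)|\le\sup|\nabla v|\,d_M(x,y)$. Since $\operatorname{lip}v=|\nabla v|$, we have $2\operatorname{Ch}(v)\le\int|\nabla v|^2$. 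Taking a diagonal sequence, lower semicontinuity gives $2\operatorname{Ch}(u)\le\int_M|\nabla u|^2$.

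The two inequalities together give equality of the spaces and of the norms, with $2\operatorname{Ch}(u)=\int_M|\nabla u|^2$.

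The main obstacle is the capacity cutoff step. Its justification is the capacity estimate \eqref{eq:local-capacity}, which is available because the cutoffs are lifted from \eqref{eq:base-capacity} through charts of finite degree $m$. The approximation of curves used in the Lipschitz extension also relies on $S$ being a small set, in particular of analytic codimension at least two, as established in Proposition~\ref{prop:completion}.
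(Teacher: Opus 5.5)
Your argument is correct and is essentially the paper's: the paper compresses into a citation of \cite[Lemma 8.2 and Proposition 8.1]{GPSS} the same cutoff-and-density scheme you spell out (truncation, distance cutoffs at infinity, the capacity cutoffs of \eqref{eq:local-capacity}, mollification inside $M$), together with relaxation on the smooth locus to obtain $2\operatorname{Ch}(u)=\int_M|\nabla u|^2\,\dd\mu_M$. One small correction to your closing remark: the Lipschitz bound for $v\in C_c^\infty(M)$ needs no codimension-two input, since $d_M$ on $M$ is by definition an infimum of lengths of curves lying in $M$, and the bound passes to $\widehat M$ by density and continuity.
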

\begin{proof}
We apply the cutoff and density argument of Guo-Phong-Song-Sturm
\cite[Lemma 8.2 and Proposition 8.1]{GPSS}, using
\eqref{eq:local-capacity} together with distance cutoffs at infinity
on the completed cover. Relaxation on the smooth locus gives the
energy identity and identifies the weak gradients. If
$|\nabla u|\le1$, the local Sobolev representative on $M$ is
locally Lipschitz. Integration along curves in $M$ then gives
$|u(x)-u(y)|\le d_M(x,y)$, and the representative extends to
$\widehat M$ with the same Lipschitz constant.
\end{proof}

Under this identification, the weak gradient coincides almost
everywhere with the weak gradient on the smooth locus, and
\begin{equation}\label{eq:form-core}
 W^{1,2}(\widehat M)=\overline{C_c^\infty(M)}^{\|\cdot\|_{1,2}},
 \qquad 2\operatorname{Ch}(u)=\int_M|\nabla u|^2\,\dd\mu_M,
\end{equation}
where $\|u\|_{1,2}^2=\int_M(|u|^2+|\nabla u|^2)\,\dd\mu_M$.
In particular, the Cheeger energy is quadratic, and $\widehat M$
has the Sobolev-to-Lipschitz property.

We write $\Delta$ for the nonpositive self-adjoint Laplacian
associated with this form: for $u\in D(\Delta)$,
\[
 \int_{\widehat M}\langle\nabla u,\nabla\varphi\rangle\,\dd\mu_M
 =-\int_{\widehat M}(\Delta u)\varphi\,\dd\mu_M
 \qquad\text{for every }\varphi\in W^{1,2}(\widehat M).
\]
Its restriction to $M$ is the usual operator
$\operatorname{div}\nabla$. For every $v\in L^2$, the equation
$f-\Delta f=v$ has a unique solution in $W^{1,2}$, which we denote
by $(1-\Delta)^{-1}v$. We emphasize that this notation is defined
before the RCD conclusion is established.

For later use, we note that the same cutoff argument on the compact
base identifies $W^{1,2}(X)$ with the space of finite-energy
functions on $X_{\reg}$. Moreover, the Sobolev inequality of
\cite[Theorem 8.1]{GPSS} provides constants $q_S>1$ and $C_S$ such
that
\begin{equation}\label{eq:base-sobolev}
 \|u\|_{L^{2q_S}(X)}^2
       \le C_S\int_X(u^2+|\nabla u|^2)\,\dd\mu.
\end{equation}

\section{A compact orbifold resolution and Ricci entropy}
\label{sec:approximation}

In this section, we use the analytic extension and the deck action of
Proposition~\ref{prop:completion} to construct a compact orbifold on
which the Ricci entropy estimates can be integrated. As we shall see
in Section~\ref{sec:lifting}, the lifted metrics approximate
$p^*\omega$ on $M$. For $n=1$, $X$ is an elliptic curve and the
completed cover is the flat complex line. We shall therefore assume
$n\ge2$ throughout this section, as well as in the analytic estimates
of Section~\ref{sec:lifting}.

\subsection{Equivariant resolution}
Functorial resolution and principalization of the pair
$(\widehat M,S)$ \cite{BM,BM97} provide a $\Gamma$-equivariant projective
modification
\[
 r:Z\longrightarrow\widehat M,
 \qquad D_Z=Z\setminus M,
\]
where $Z$ is smooth, $r$ is an isomorphism over $M$, and $D_Z$ has
simple normal crossings. Hence
\[
 \Q=[Z/\Gamma],\qquad D=D_Z/\Gamma,
\]
is a compact smooth complex orbifold, equipped with a map $q:\Q\to X$
which identifies $\Q\setminus D$ with $X_{\reg}$.
The blowups provide a $\Gamma$-invariant effective exceptional
divisor $D_0$ with support $D_Z$, such that $\mathcal O(-D_0)$ is
relatively ample over $\widehat M$. On each finite chart it is also
relatively ample over $X$. The order of stabilizer of $\Gamma$ action on $Z$ are bounded, so a
divisible power of this naturally linearized line bundle descends
to a relatively ample line bundle on the coarse quotient. The latter
is therefore projective over $X$, hence projective. Gluing invariant
strictly plurisubharmonic local potentials by regularized maxima
gives positive orbifold curvature in the pullback of an ample class,
as in \cite[Section 3.4]{EGZ}. Fix an orbifold K\"ahler form $\beta$ on $\Q$.

We choose a Fubini-Study form $\theta\in c_1(L)$ and set
$\alpha=q^*\theta$. For the global orbifold divisors $D_i$, choose
defining sections $s_i$ and smooth orbifold Hermitian metrics $h_i$.
The exceptional divisor construction gives numbers $b_i>0$ such that,
with
\[
 t_i=|s_i|_{h_i}^2\le e^{-2},\qquad L_i=-\log t_i,
 \qquad\psi_D=\sum_i b_i\log t_i,
\]
we have
\begin{equation}\label{eq:barrier}
 \alpha+\ddc\psi_D\ge c\beta\quad\text{on }\Q\setminus D.
\end{equation}
Indeed, a small exceptional correction to $[\alpha]$ is K\"ahler by
relative ampleness: represent that class by positive curvature and
choose the $h_i$ accordingly, then apply the Poincar\'e-Lelong
formula. This is the construction in \cite[Section 4]{GSS}.
The sections, norms and regularizations below are thus compatible
on overlaps of uniformizing charts.

\subsection{The adapted volume and its regularization}
Let $\Omega_X$ be the adapted Calabi-Yau volume form, normalized so
that $\omega^n=\Omega_X$. It may be constructed from a trivialization
of a sufficiently divisible multiple of $K_X$. In every finite
chart $V\to U$, the cover is quasi-\'etale, so that
$K_V=\widehat p^*K_U$. On the orbifold, the adapted volume takes the
form
\begin{equation}\label{eq:adapted}
 q^*\Omega_X=\prod_i t_i^{a_i}\Omega_0,
 \qquad a_i>-1,
\end{equation}
where $\Omega_0$ is a smooth positive orbifold volume form.
Indeed, on a uniformizing resolution chart $r_V:Z_V\to V$, the
formula $K_{Z_V}=r_V^*K_V+\sum_i a_iD_i$ gives the factors
$t_i^{a_i}=|s_i|_{h_i}^{2a_i}$, after taking the appropriate root of
the squared norm of a pluricanonical generator. Since finite
quasi-\'etale covers preserve klt singularities, $V$ is klt and its
discrepancies satisfy $a_i>-1$. These discrepancies are computed on
the uniformizing charts, with all divisorial stabilizers retained.

For $0<\eps\le1/4$, we set $\alpha_\eps=\alpha+\eps\beta$ and solve
\begin{equation}\label{eq:approx-ma}
 \omega_\eps=\alpha_\eps+\ddc u_\eps>0,
 \qquad
 \omega_\eps^n=c_\eps\prod_i(t_i+\eps)^{a_i}\Omega_0,
 \qquad\sup_\Q u_\eps=0.
\end{equation}
Here $c_\eps$ is the normalizing constant. The existence of a
solution follows from the orbifold version of the Calabi conjecture,
obtained by the usual continuity method in finite uniformizing
charts; see \cite[Theorem 4.3]{CC}. Equation~\eqref{eq:approx-ma} is
the prescribed volume approximation which will be used throughout
this section.

\begin{lemma} \label{lem:approx}
There exist $p>1$ and constants $C,N$ independent of $\eps$ such that
\begin{equation}\label{eq:approx-bounds}
 \|u_\eps\|_\infty\le C,
 \quad
 \left\|\frac{\omega_\eps^n}{\beta^n}\right\|_{L^p(\beta^n)}\le C,
 \quad
 \tr_{\omega_\eps}\beta\le C\prod_i t_i^{-N}
 \quad\text{off }D.
\end{equation}
Moreover, $c_\eps\to1$, and $\omega_\eps\to q^*\omega$ locally
smoothly on $\Q\setminus D=X_{\reg}$.
\end{lemma}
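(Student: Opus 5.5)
The plan is to treat \eqref{eq:approx-ma} as a family of complex Monge-Amp\`ere equations on the compact orbifold $\Q$, with degenerating but uniformly $L^p$-integrable right-hand sides, and to run the standard degenerate Calabi-Yau estimates in uniformizing charts, following \cite{EGZ} and \cite[Section 4]{GSS}.

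\emph{Normalizing constants and $L^p$ bound.} Integrating \eqref{eq:approx-ma} gives
\[
 c_\eps\int_\Q\prod_i(t_i+\eps)^{a_i}\Omega_0=\int_\Q\alpha_\eps^n\longrightarrow\int_X\theta^n .
\]
By dominated convergence the left-hand integral tends to $\int_\Q\prod t_i^{a_i}\Omega_0=\int_X\Omega_X=\int_X\omega^n$. This uses $a_i>-1$: for $a_i<0$ we have $(t_i+\eps)^{a_i}\le t_i^{a_i}$, and for $a_i\ge0$ the factor is bounded. Since $\omega^n$ and $\theta^n$ have the same total mass, $c_\eps\to1$. The same pointwise domination yields $\omega_\eps^n/\beta^n\le C\prod_i t_i^{\min(a_i,0)}$. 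Choosing $p>1$ with $p\min(a_i,0)>-1$ for all $i$ gives a uniform $L^p$ bound, since on the simple normal crossings orbifold this reduces to integrability of $|z|^{2s}$ with $s>-1$.

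\emph{Uniform $C^0$ bound.} The classes $[\alpha_\eps]$ all lie in $[\alpha]+[0,1]\cdot[\beta]$ and dominate the semipositive big class $[\alpha]$. Kołodziej's estimate in its uniform form for big and semipositive families \cite{EGZ}, adapted to orbifolds by working in finite uniformizing charts, then gives $\|u_\eps\|_\infty\le C$. Here $\sup u_\eps=0$, and we use the fact that $\alpha$ is the pullback of a K\"ahler form from the normal space $X$.

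\emph{Laplacian bound with poles.} We apply the Chern-Lu inequality to the identity map $(\Q,\omega_\eps)\to(\Q,\beta)$. The Ricci curvature of $\omega_\eps$ is
\[
 -\ddc\log\Bigl(\prod_i(t_i+\eps)^{a_i}\Omega_0\Bigr),
\]
whose singular part is bounded below only up to the error terms $a_i\ddc\log(t_i+\eps)$. These are controlled by $C\beta$ plus a positive current when $a_i\ge0$. When $a_i<0$ one uses $\ddc\log(t_i+\eps)\ge -C\beta$ together with the curvature of $h_i$. I would handle this by the Tsuji/Păun trick. Consider
\[
 Q=\log\tr_{\omega_\eps}\beta-Au_\eps+A\psi_D,
\]
and use the barrier \eqref{eq:barrier}, $\alpha+\ddc\psi_D\ge c\beta$, to absorb the lower bound on the bisectional curvature of $\beta$ and the negative Ricci contributions. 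Since $\psi_D\to-\infty$ along $D$, the maximum of $Q$ is attained off $D$, and the maximum principle gives
\[
 \tr_{\omega_\eps}\beta\le Ce^{-A\psi_D}=C\prod_i t_i^{-Ab_i},
\]
which is the third bound in \eqref{eq:approx-bounds} with $N=A\max_i b_i$. This is the main obstacle: making the Chern-Lu computation work uniformly in $\eps$ when some $a_i$ are negative. If this fails, I would instead bound $\tr_\beta\omega_\eps$ via the Aubin-Yau/Siu inequality and then combine it with the volume form, which is bounded above and below by powers of $t_i$, to obtain $\tr_{\omega_\eps}\beta\le(\tr_\beta\omega_\eps)^{n-1}\beta^n/\omega_\eps^n$.

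\emph{Local smooth convergence.} On compact subsets of $\Q\setminus D$, the bounds above give uniform two-sided control of $\omega_\eps$, and the right-hand side of \eqref{eq:approx-ma} is smooth uniformly in $\eps$. Evans-Krylov and Schauder estimates then give $C^\infty_{\loc}$ bounds. Any subsequential limit $u$ is bounded and solves
\[
 (\alpha+\ddc u)^n=q^*\Omega_X
\]
on $X_{\reg}$, and hence globally in the Bedford-Taylor sense, since bounded potentials put no mass on $D$. Because $u$ is $\alpha$-psh and bounded, it descends to $X$, so it equals the potential of $\omega$ by the uniqueness in \cite{EGZ}, with the normalization fixed by $\sup u=0$. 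Uniqueness of the limit upgrades subsequential convergence to full convergence $\omega_\eps\to q^*\omega$ locally smoothly.
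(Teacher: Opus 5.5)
Your treatment of $c_\eps$, the $L^p$ bound, the uniform $C^0$ bound and the local smooth convergence agrees with the paper. The gap is in your primary argument for the trace bound, where the sign is backwards. With $P_{i,\eps}=\ddc\log(t_i+\eps)+\frac{t_i}{t_i+\eps}\Theta_i\ge0$, equation \eqref{eq:approx-ma} gives
\[
 \Ric(\omega_\eps)=\Ric(\Omega_0)+\sum_i a_i\frac{t_i}{t_i+\eps}\Theta_i-\sum_i a_iP_{i,\eps}.
\]
Chern--Lu needs a lower Ricci bound, so the terms with $a_i<0$ are harmless and the obstruction comes from $a_i>0$. Where $t_i\sim\eps$, the form $P_{i,\eps}$ has size $\eps^{-1}$. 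These terms therefore contribute $-a_i\tr_{\omega_\eps}P_{i,\eps}$ to $\Delta_{\omega_\eps}\log\tr_{\omega_\eps}\beta$, which is not bounded by $C\tr_{\omega_\eps}\beta$ uniformly in $\eps$. The barrier $-Au_\eps+A\psi_D$ only supplies $Ac\,\tr_{\omega_\eps}\beta-An$, so with your $Q$ the maximum principle gives no uniform bound.

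The missing ingredient is the actual P\u aun term. For $P\ge0$ one has $\langle P,\beta\rangle_{\omega}\le\tr_\omega P\,\tr_\omega\beta$, and also $\tr_{\omega_\eps}P_{i,\eps}\le\Delta_{\omega_\eps}\log(t_i+\eps)+C\tr_{\omega_\eps}\beta$. Using these, the quantity
\[
 Q=\log\tr_{\omega_\eps}\beta+\sum_{a_i>0}a_i\log(t_i+\eps)-Au_\eps+A\psi_D
\]
satisfies $\Delta_{\omega_\eps}Q\ge(Ac-C)\tr_{\omega_\eps}\beta-An$ off $D$. The maximum principle then gives $\tr_{\omega_\eps}\beta\le C\prod_{a_i>0}t_i^{-a_i}\prod_i t_i^{-Ab_i}$. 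With this correction, your Chern--Lu route is a valid alternative that bounds $\tr_{\omega_\eps}\beta$ directly.

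Your fallback is exactly the paper's proof. The prescribed-volume trace estimate \cite[Lemma 7.1]{GPSS} gives $\tr_\beta\omega_\eps\le C\prod_i t_i^{-N}$, and the density satisfies $\omega_\eps^n/\beta^n\ge c\prod_{a_i>0}t_i^{a_i}$ uniformly in $\eps$. Combining these with $\tr_{\omega_\eps}\beta\le C(\tr_\beta\omega_\eps)^{n-1}/(\omega_\eps^n/\beta^n)$ gives the third bound.

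In that route the roles are reversed. The Aubin--Yau inequality needs a lower bound on $\Delta_\beta\log(\omega_\eps^n/\beta^n)$, so it is the terms with $a_i<0$ that must be absorbed. This is done by adding $\sum_{a_i<0}|a_i|\log(t_i+\eps)$ to $\log\tr_\beta\omega_\eps$ in the test quantity. The difficulty you anticipated for negative discrepancies is real, but it belongs to this route and not to the Chern--Lu one.
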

\begin{proof}
Choose $p>1$ such that $pa_i>-1$ whenever $a_i<0$. In normal crossing
coordinates, we find
\[
 \sup_\eps\left\|\prod_i(t_i+\eps)^{a_i}\right\|_{L^p(\Omega_0)}<\infty,
 \qquad
 \prod_i(t_i+\eps)^{a_i}\longrightarrow\prod_i t_i^{a_i}
 \text{ in }L^1(\Omega_0).
\]
Hence
\[
 c_\eps=
 \frac{\int_\Q\alpha_\eps^n}
      {\int_\Q\prod_i(t_i+\eps)^{a_i}\Omega_0}
 \longrightarrow1.
\]
The bounded-potential estimates of \cite{EGZ} and
\cite[Theorem 2.2(A), (C)]{DP} apply to the fixed
semipositive big form $\alpha$ and its perturbations
$\alpha+\eps\beta$. The same capacity proof works in finite
uniformizing charts, with integrals divided by the chart group
orders, and gives a bound uniform in $\eps$.
Using \eqref{eq:barrier}, the prescribed-volume trace estimate of
\cite[Lemma 7.1]{GPSS}, also used in \cite[Lemma 4.3]{GSS},
and the determinant bound give
\[
 \begin{gathered}
 \tr_\beta\omega_\eps\le C\prod_i t_i^{-N},\qquad
 \frac{\omega_\eps^n}{\beta^n}\ge c\prod_{a_i>0}t_i^{a_i},\\
 \tr_{\omega_\eps}\beta
 \le C\frac{(\tr_\beta\omega_\eps)^{n-1}}{\omega_\eps^n/\beta^n}.
 \end{gathered}
\]
Increasing $N$ if necessary, we obtain \eqref{eq:approx-bounds}.
These local estimates are invariant under the finite chart groups.
Interior elliptic estimates give smooth subsequential convergence
away from $D$, and the uniqueness of bounded solutions of the
Monge-Amp\`ere equation \cite{EGZ} identifies every limit with
$q^*\omega$.
\end{proof}

\subsection{The entropy estimate}
Let $\rho_\eps$ denote the magnitude of the negative part of the
smallest Ricci eigenvalue of $\omega_\eps$.

\begin{proposition} \label{prop:entropy}
The metrics \eqref{eq:approx-ma} satisfy
\begin{equation}\label{eq:entropy}
 \sup_{0<\eps\le1/4}
 \int_\Q\rho_\eps\log(1+\rho_\eps)
                 \frac{\omega_\eps^n}{n!}<\infty.
\end{equation}
Their lifts to $Z$ are complete smooth $\Gamma$-invariant
K\"ahler metrics, which converge locally smoothly to $p^*\omega$
on $M$.
\end{proposition}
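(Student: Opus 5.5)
The plan is to compute the Ricci form of $\omega_\eps$ directly from \eqref{eq:approx-ma} and then follow the Ricci entropy scheme of \cite[Section 4]{GSS}, checking that it runs in finite uniformizing charts. Away from $D$ we have
\[
 \Ric(\omega_\eps)=-\ddc\log\frac{\omega_\eps^n}{\Omega_0}+\Ric(\Omega_0)
 =\Ric(\Omega_0)-\sum_i a_i\,\ddc\log(t_i+\eps).
\]
Moreover,
\[
 -\ddc\log(t_i+\eps)=\frac{t_i}{t_i+\eps}\,\Theta_{h_i}
 -\frac{\eps\,\sqrt{-1}\,\partial s_i\wedge\overline{\partial s_i}}{(t_i+\eps)^2},
\]
where $\partial s_i$ denotes the Chern derivative with respect to $h_i$. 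The first term is bounded by $C\beta$. The second is a semidefinite form concentrated near $D_i$, with sign $-a_i$. Hence
\[
 \Ric(\omega_\eps)\ge -C\beta-\sum_{a_i>0}a_i\,\eps\frac{\sqrt{-1}\,\partial s_i\wedge\overline{\partial s_i}}{(t_i+\eps)^2}.
\]
Taking the smallest eigenvalue with respect to $\omega_\eps$ gives the pointwise bound
\[
 \rho_\eps\le C\tr_{\omega_\eps}\beta+C\sum_{a_i>0}\frac{\eps\,|\partial s_i|^2_{\omega_\eps}}{(t_i+\eps)^2}.
\]

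We then estimate the two terms in $\int\rho_\eps\log(1+\rho_\eps)\,\omega_\eps^n$ separately, following \cite{GSS}.

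For the first term, the quantity $\tr_{\omega_\eps}\beta\,\omega_\eps^n$ is comparable to $\omega_\eps^{n-1}\wedge\beta$, whose integral is cohomological and therefore uniformly bounded. To control the extra logarithm, we use the trace bound of Lemma~\ref{lem:approx}, $\tr_{\omega_\eps}\beta\le C\prod t_i^{-N}$. This gives
\[
 \log(1+\tr_{\omega_\eps}\beta)\le C+N\sum L_i .
\]
It then suffices to bound $\int L_i\,\omega_\eps^{n-1}\wedge\beta$. Since $L_i$ is quasi-psh-like with $-\ddc L_i\ge -C\beta$, we will handle this by integration by parts together with the uniform bound on $u_\eps$.

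The second term is the main obstacle. Its plain integral is cohomological:
\[
 \int\frac{\eps\,\sqrt{-1}\,\partial s_i\wedge\overline{\partial s_i}}{(t_i+\eps)^2}\wedge\omega_\eps^{n-1}
 =\int\bigl(-\ddc\log(t_i+\eps)-\tfrac{t_i}{t_i+\eps}\Theta_{h_i}\bigr)\wedge\omega_\eps^{n-1},
\]
which is uniformly bounded. The difficulty is the additional factor $\log(1+\rho_\eps)$. Here I would follow the Ricci entropy calculation of \cite{GSS}, which replaces $L\log L$ by an integral against the weight $\log\bigl(\prod(t_i+\eps)^{-1}\bigr)$. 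One has $\log(1+\rho_\eps)\le C+C\log(1/(t_i+\eps))+C\sum L_j$ on the support of the concentrated term, because $|\partial s_i|^2_{\omega_\eps}\le C\,t_i\,\tr_{\omega_\eps}\beta$. What remains is to integrate $\log(1/(t_i+\eps))$ against the concentrated form. Writing
\[
 \frac{\eps\,|\partial s_i|^2}{(t_i+\eps)^2}\log\frac{1}{t_i+\eps}=\sqrt{-1}\,\partial\bar\partial\Phi_\eps(t_i)+\text{bounded terms}
\]
for an explicit one-variable function $\Phi_\eps$ with $|\Phi_\eps|\le C$ uniformly, integration by parts against $\omega_\eps^{n-1}$ closes the estimate. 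Exactly this step is done in \cite[Section 4]{GSS}. All integrals are computed in uniformizing charts divided by the stabilizer orders, so the estimates pass to $\Q$.

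Finally, we lift to $Z$. The lifts are $\Gamma$-invariant by construction. Completeness holds because $\omega_\eps\ge\alpha_\eps$-type control is bounded below by $\eps\beta$ modulo $\ddc u_\eps$; more simply, $\omega_\eps$ is a smooth orbifold metric on the compact $\Q$, so its lift to $Z$ is a smooth metric invariant under a cocompact group, hence complete. Local smooth convergence to $p^*\omega$ on $M$ follows from the convergence $\omega_\eps\to q^*\omega$ on $\Q\setminus D=X_{\reg}$ in Lemma~\ref{lem:approx}, since $M\to X_{\reg}$ is a local isometric covering.
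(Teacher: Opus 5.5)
Your computation of $\Ric(\omega_\eps)$ matches the paper, as do the resulting bound $\rho_\eps\le C\tr_{\omega_\eps}\beta+C\sum_{a_i>0}\tr_{\omega_\eps}P_{i,\eps}$ with $P_{i,\eps}=\eps\sqrt{-1}Ds_i\wedge\overline{Ds_i}/(t_i+\eps)^2$, your treatment of the $\tr_{\omega_\eps}\beta$ part via $\log(1+\tr_{\omega_\eps}\beta)\le C(1+\sum_kL_k)$ and $\int_\Q L_k\,\beta\wedge\omega_\eps^{n-1}\le C$, and the lifting to $Z$. One slip: $|Ds_i|^2_{\omega_\eps}\le Ct_i\tr_{\omega_\eps}\beta$ is false, since $Ds_i$ does not vanish along $D_i$. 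The correct bound $|Ds_i|^2_{\omega_\eps}\le C\tr_{\omega_\eps}\beta$ gives $\tr_{\omega_\eps}P_{i,\eps}\le C(t_i+\eps)^{-1}\tr_{\omega_\eps}\beta$, which still yields $\log(1+\rho_\eps)\le C(1+\sum_kL_k)$.

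The genuine gap is in the step you call the main obstacle. No smooth, or uniformly bounded, $\Phi_\eps$ can satisfy $\log\frac1{t_i+\eps}\,P_{i,\eps}\le\ddc\Phi_\eps+C\beta$ with $C$ independent of $\eps$. If it did, integrating against $\omega^{n-1}$ for a \emph{fixed} K\"ahler form $\omega$ would bound the left side uniformly. But $P_{i,\eps}\to2\pi[D_i]$ concentrates where $t_i\lesssim\eps$, so $\int_\Q\log\frac1{t_i+\eps}\,P_{i,\eps}\wedge\omega^{n-1}$ grows like $2\pi|\log\eps|\int_{D_i}\omega^{n-1}$. Explicitly, $\ddc\Phi(t_i)=(t\Phi')'\sqrt{-1}Ds_i\wedge\overline{Ds_i}-t\Phi'\Theta_i$, so matching forces $t\Phi_\eps'=\eps(1+\log(t+\eps))/(t+\eps)+c$. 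This leaves two options. Either $|t\Phi_\eps'|\sim|\log\eps|$ where $t_i\sim1$, which makes the $\Theta_i$ error of size $|\log\eps|$. Or $\Phi_\eps$ has a logarithmic pole along $D_i$ with coefficient of order $|\log\eps|$.

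A pointwise identity followed by Stokes cannot see which metric you integrate against, so it cannot close the estimate. The $|\log\eps|$ divergence is real, and it is compensated only by the smallness of the $\omega_\eps$-volume of the divisors; this is the missing idea. The paper moves $\ddc$ from $\log(t_i+\eps)$ onto $L_k$:
\[
\int_\Q L_kP_{i,\eps}\wedge\omega_\eps^{n-1}\le C-2\pi\int_{D_k}\log(t_i+\eps)\,\omega_\eps^{n-1}\le C+2\pi|\log\eps|\int_{D_k}\alpha_\eps^{n-1}.
\]
It then uses that every $D_k$ is exceptional over $X$. Since $\widehat M\setminus M$ has codimension at least two, $\dim q(D_k)\le n-2$. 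Hence $\alpha^j|_{D_k}=0$ for $j>\dim q(D_k)$, and $\int_{D_k}\alpha_\eps^{n-1}\le C\eps$, for a total of $C(1+\eps|\log\eps|)$. This argument treats all $k$ at once. That includes the cross terms $L_jP_{i,\eps}$ with $j\ne i$, which your own bound on $\log(1+\rho_\eps)$ produces and which your sketch leaves unaddressed.
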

\begin{proof}
We follow \cite[Lemmas 6.1--6.4 and Proposition 6.1]{GSS}. Writing
$\ddc\log t_i=2\pi[D_i]-\Theta_i$, we set
\[
 \begin{aligned}
 P_{i,\eps}
 &=\ddc\log(t_i+\eps)+\frac{t_i}{t_i+\eps}\Theta_i\\
 &=\frac{\eps\sqrt{-1}\langle D s_i\wedge\overline{D s_i}\rangle}
 {(t_i+\eps)^2},\qquad 0\le P_{i,\eps}\le Ct_i^{-1}\beta
 \end{aligned}
\]
off $D_i$. Equation~\eqref{eq:approx-ma} then gives
\[
 \begin{gathered}
 \Ric(\omega_\eps)\ge-C\beta-\sum_{a_i>0}a_iP_{i,\eps},\\
 \rho_\eps\le R_\eps:=1+C\tr_{\omega_\eps}\beta
             +\sum_{a_i>0}a_i\tr_{\omega_\eps}P_{i,\eps}.
 \end{gathered}
\]
By Lemma~\ref{lem:approx},
\begin{equation}\label{eq:log-R}
 \log(1+R_\eps)\le C\Big(1+\sum_k L_k\Big)
 \quad\text{almost everywhere}.
\end{equation}

Next, set $T_\eps=\sum_{j=0}^{n-2}\omega_\eps^j\wedge
\alpha_\eps^{n-2-j}$. The orbifold Stokes theorem and the
Poincar\'e-Lelong formula give
\begin{equation}\label{eq:log-moment}
 \begin{aligned}
 \int_\Q L_k\beta\wedge\omega_\eps^{n-1}
 &=\int_\Q L_k\beta\wedge\alpha_\eps^{n-1}\\
 &\quad+\int_\Q u_\eps\beta\wedge(\Theta_k-2\pi[D_k])\wedge T_\eps\\
 &\le C(1+\|u_\eps\|_\infty)\le C.
 \end{aligned}
\end{equation}
The last bound uses uniform bounds on the cohomological
integrals. Since $|\log(t_i+\eps)|\le L_i$, a second integration by
parts yields
\begin{equation}\label{eq:spike-moment}
 \begin{aligned}
 \int_\Q L_kP_{i,\eps}\wedge\omega_\eps^{n-1}
 &\le C-2\pi\int_{D_k}\log(t_i+\eps)\omega_\eps^{n-1}\\
 &\le C+2\pi|\log\eps|\int_{D_k}\alpha_\eps^{n-1}.
 \end{aligned}
\end{equation}
Here the logarithm is smooth for fixed $\eps>0$, and the same
orbifold multiplicities occur in the volume and divisor integrals.
Since $d_k:=\dim q(D_k)\le n-2$, we have
\begin{equation}\label{eq:exceptional-mass}
 \int_{D_k}\alpha_\eps^{n-1}
 =\sum_{j=0}^{d_k}\binom{n-1}{j}\eps^{n-1-j}
       \int_{D_k}\alpha^j\wedge\beta^{n-1-j}
 \le C\eps.
\end{equation}
The trace-wedge identity, the estimates
\eqref{eq:log-R}-\eqref{eq:exceptional-mass}, and the $L^p$ bound on
the density now give
\[
 \int_\Q\rho_\eps\log(1+\rho_\eps)\omega_\eps^n
 \le\int_\Q R_\eps\log(1+R_\eps)\omega_\eps^n
 \le C(1+\eps|\log\eps|).
\]
The lifted metrics are complete because $\Q$ is compact, and their
local smooth convergence follows from Lemma~\ref{lem:approx}.
\end{proof}

\section{The RCD structure}\label{sec:lifting}

We now prove the curvature statement for the completion constructed
in Section~\ref{sec:completion}.

\begin{theorem}\label{thm:lifting}
Let $X$ be a projective Calabi-Yau variety of complex dimension $n$,
equipped with the Ricci-flat metric of a fixed polarization. Then the
metric completion of the universal cover $M\to X_{\reg}$ satisfies
\begin{equation}\label{eq:main}
 (\widehat M,d_M,\mu_M)\text{ is }\RCD(0,2n).
\end{equation}
\end{theorem}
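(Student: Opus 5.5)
We will verify the RCD$(0,2n)$ condition through the Eulerian characterization, following Honda--Sun \cite{HS}: a proper geodesic metric measure space whose Cheeger energy is quadratic, which has the Sobolev-to-Lipschitz property and satisfies a volume growth bound $\mu_M(B_r(y))\le Ce^{Cr^2}$, is $\RCD(0,2n)$ once the weak Bochner inequality
\[
 \tfrac12\int_{\widehat M}|\nabla f|^2\Delta\varphi\,\dd\mu_M
 -\int_{\widehat M}\langle\nabla f,\nabla\Delta f\rangle\varphi\,\dd\mu_M
 \ge\frac{1}{2n}\int_{\widehat M}(\Delta f)^2\varphi\,\dd\mu_M
\]
holds for all $f\in D(\Delta)$ with $\Delta f\in W^{1,2}$ and all nonnegative bounded $\varphi\in D(\Delta)$ with $\Delta\varphi\in L^\infty$. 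Several of these hypotheses are already available. Properness and the geodesic property come from Proposition~\ref{prop:completion}. Quadraticity and the Sobolev-to-Lipschitz property come from Lemma~\ref{lem:form} and \eqref{eq:form-core}. The volume bound follows from cocompactness of the $\Gamma$-action together with $\mu_M(V)=m\mu(U)$. It therefore remains to prove the Bochner inequality. It holds pointwise on $M$, since $p^*\omega$ is smooth and Ricci-flat there. The task is to pass it across $S$, which has zero capacity by \eqref{eq:local-capacity}.

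The step we expect to be the main obstacle is the gradient estimate. For $v\in L^2\cap L^\infty$, the function $f=(1-\Delta)^{-1}v$ should have $|\nabla f|\in L^\infty_{\loc}(\widehat M)$, with a bound depending only on $\|v\|_\infty$ and the support of $v$; we would first prove it for compactly supported $v$. To obtain it, we solve the same resolvent equations for the lifted approximating metrics $\omega_\eps$ on $Z$. These metrics are complete, smooth and $\Gamma$-invariant by Proposition~\ref{prop:entropy}. For them we run the Bochner/Moser iteration for $|\nabla f_\eps|^2$. The negative Ricci part $\rho_\eps$ enters only through $\int\rho_\eps|\nabla f_\eps|^2$, and we control this term by the uniform entropy bound \eqref{eq:entropy} via Young's inequality for $s\log(1+s)$, as in \cite{GSS}. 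Two further inputs are a uniform Sobolev inequality and the summing trick. The Sobolev inequality holds on $(Z,\omega_\eps)$, lifted chartwise from the uniform entropy-based Sobolev bound on $\Q$ (cf.~\eqref{eq:base-sobolev} and \cite{GPSS}). The summing trick is needed because the cover has infinitely many sheets. We push forward $\sum_{\gamma\in\Gamma}|\psi\circ\gamma|^2$ for compactly supported $\psi$ on $Z$ to the compact orbifold $\Q$. Since stabilizers are finite and sheets over a chart are disjoint, the Sobolev and $L^p$ estimates on $\Q$ transfer to local estimates on the cover with constants independent of the location. This yields $\sup_{B_1}|\nabla f_\eps|\le C(\|f_\eps\|_{L^2(B_2)}+\|v\|_\infty)$ uniformly in $\eps$. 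Finally, smooth convergence on $M$ from Proposition~\ref{prop:entropy}, together with uniqueness of the $W^{1,2}$ resolvent via \eqref{eq:form-core}, identifies the limit with $f$. The same argument also gives $\Delta f\in W^{1,2}$ for the test class.

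With locally bounded gradients in hand, we follow \cite{HS}. Take capacity cutoffs $1-a_j$ vanishing near $S$, as constructed before \eqref{eq:local-capacity}, combined with distance cutoffs at infinity. Multiply the pointwise Bochner identity on $M$ by $\varphi(1-a_j)$ and integrate by parts on $M$. The error terms are of the form $\int|\nabla a_j|\,|\nabla f|\,|\nabla|\nabla f|^2|\varphi$ and $\int|\nabla a_j|\,|\nabla f|^2|\nabla\varphi|$. We bound them by Cauchy--Schwarz, using $\|\nabla a_j\|_{L^2}\to0$, $|\nabla f|\in L^\infty_{\loc}$, and a local $L^2$ bound on $\Hess f$ obtained from the integrated Bochner identity with the same cutoffs. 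Hence these terms tend to zero, and the weak Bochner inequality holds for resolvents of bounded data. A density argument in the domain, following \cite[Section 4]{HS}, extends it to the full test class.

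Once the Bochner inequality holds, the Eulerian characterization gives $\RCD(0,2n)$. Should the global characterization require it, the local-to-global principle together with the $\Gamma$-equivariance established in Proposition~\ref{prop:completion} completes the proof of \eqref{eq:main}.
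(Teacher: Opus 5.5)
Your overall frame matches the paper's: the Honda--Sun Eulerian criterion, quadraticity and Sobolev-to-Lipschitz from Lemma~\ref{lem:form}, capacity cutoffs across $S$, and density in the domain. But the gradient estimate, which you correctly identify as the crux, fails as you propose it.

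You want a Moser iteration for $s_\eps=|\nabla f_\eps|^2$ on $(Z,\omega_\eps)$ giving $\sup_{B_1}|\nabla f_\eps|\le C$ uniformly in $\eps$. Testing $\tfrac12\Delta_\eps s_\eps\ge|\Hess f_\eps|^2-\rho_\eps s_\eps+\cdots$ against $\eta^2 s_\eps^{k-1}$ produces the term $\int\eta^2\rho_\eps s_\eps^{k}$ at every step. The only control on $\rho_\eps$ is the entropy bound \eqref{eq:entropy}. Young's inequality $ab\le a\log(1+a)+e^b$ would then need $\int e^{s_\eps^k}$ to be bounded, which is exactly what you are trying to prove. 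A Moser or De Giorgi iteration for $\Delta u\ge-\rho u$ requires $\rho\in L^p$ with $p>n$ (real dimension $2n$), far beyond an $L\log L$ bound. So your iteration does not close, and the paper never attempts a uniform sup bound along the approximation.

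What the entropy bound does give is control of $h_\eps=\log(1+|\nabla F_\eps|^2)$, summed over the sheets. There $\rho_\eps$ enters only linearly, $\Delta h_\eps\ge-C(1+\rho_\eps)-\cdots$, and one uses $\rho h\le\rho\log(1+\rho)+e^h$ together with $\int_\Q e^{h_\eps}\,\dd\mu_\eps=\mu_\eps(\Q)+\int_\Q|\nabla F_\eps|^2\,\dd\mu_\eps$.

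Even this needs two K\"ahler inputs. The first is the refinement $|\Hess F|^2\ge2|\nabla w|^2-4|B_F||\nabla w|$ of Kato's inequality. The second is the curvature-free identity $\int_\Q|B_\eps|^2\,\dd\mu_\eps=\tfrac12\int_\Q|\Delta_\eps F_\eps|^2\,\dd\mu_\eps$. Plain Kato only gives $\Delta h\ge-\tfrac12|\nabla h|^2-C(1+\rho)$, which destroys the good term in $\Delta(1+h)^2$. The outcome, Lemma~\ref{lem:log-estimate}, is merely a uniform $W^{1,2}$ bound on $h_\eps$ and an $L^2$ bound on $B_\eps$.

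The $L^\infty$ bound is obtained only in the limit, in four steps:
\begin{enumerate}
\item Energy convergence without loss (Lemma~\ref{lem:solution-convergence}) yields $h\in W^{1,2}(X)$ and $|B_F|\in L^2(X)$, where $s(x)=\sum_{y\in p^{-1}(x)}|\nabla f(y)|^2$.
\item Ricci-flatness on $X_{\reg}$ kills $\rho$ in the drift inequality for $h$. Testing against the truncations $e^h/(1+e^h/R)$ then gives $\int_X e^h|\nabla h|^2\,\dd\mu<\infty$, i.e.\ $z=\sqrt{1+s}\in W^{1,2}(X)$.
\item This finite energy is exactly what lets $\Delta z\ge-C_vz$ pass across the singular set via capacity cutoffs.
\item Moser iteration with the Sobolev inequality on the compact base $X$ gives $\|z\|_\infty<\infty$, hence $|\nabla f|\le\|z\|_\infty$ on all of $\widehat M$.
\end{enumerate}
This two-stage structure is the missing idea: a uniform log-energy estimate on the approximations, followed by a Moser iteration only in the Ricci-flat limit.

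A minor point: the volume growth bound also needs an orbit-counting argument via word length. Cocompactness and $\mu_M(V)=m\mu(U)$ alone do not give it.
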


The analytic identification and the proper cocompact deck action
have already been obtained in Proposition~\ref{prop:completion}.
It remains to prove global gradient bounds for the solutions of
$f-\Delta f=v$, using the approximations of
Section~\ref{sec:approximation}. The cutoff argument of Honda-Sun
will then give Theorem~\ref{thm:lifting}.

\subsection{Elliptic equations, convergence and the gradient bound}

For each fixed $\eps>0$, the metric $\omega_\eps$ is smooth in the
finite uniformizing charts of the compact orbifold $\Q$.
The differential calculations below are therefore the usual smooth
ones, and their integrals are justified by the orbifold Stokes
theorem. We use the calculations of \cite{GSS}, summing the scalar
equations over the covering sheets.

We write $g_\eps$ for the metric on $\Q$ as well as for its lift to
$Z$, and $\mu_\eps$, $\widetilde\mu_\eps$ for their volume measures.
The target measures are $\mu$ on $X$ and $\mu_M$ on $\widehat M$.
For a real-valued function $f$ on $Z$, we use the notation
$F=(F_\gamma)_{\gamma\in\Gamma}$ for its translates in a local
uniformizing chart, with
\begin{equation}\label{eq:pack}
 F_\gamma(z)=f(\gamma z),\qquad
 |F|^2=\sum_{\gamma\in\Gamma}F_\gamma^2,\qquad
 |\nabla F|^2=\sum_{\gamma\in\Gamma}|\nabla F_\gamma|^2.
\end{equation}
All derivatives are taken componentwise, and inner products are
summed over the same index set. Since a change of local lift permutes
the indices, these scalar sums are well defined on $\Q$.
For $f$ of finite $W^{1,2}$ norm, unfolding gives
\begin{equation}\label{eq:unfold}
\begin{aligned}
 \int_\Q|F|^2\,\dd\mu_\eps
   &=\int_Z|f|^2\,\dd\widetilde\mu_\eps,\\
 \int_\Q|\nabla F|^2\,\dd\mu_\eps
   &=\int_Z|\nabla f|^2\,\dd\widetilde\mu_\eps.
\end{aligned}
\end{equation}
The action is free away from the null divisor $D_Z$, and these
identities also hold for the target metric on $M\to X_{\reg}$.

Fix $v\in C_c^\infty(M;\R)$, extend it by zero to $Z$, and write
$V=(v(\gamma z))_{\gamma\in\Gamma}$. On each relatively compact
chart, only finitely many components are nonzero, and all of them
vanish near $D$. By properness, the number of points of each
$\Gamma$-orbit in $\operatorname{supp}v$ is uniformly bounded, so
local smooth convergence gives
\begin{equation}\label{eq:source-bounds}
 \sup_\eps\big(\|V\|_\infty+
       \|\nabla V\|_{\infty,g_\eps}+\|V\|_{L^2(\mu_\eps)}\big)<\infty.
\end{equation}
On the complete manifold $(Z,g_\eps)$, let $f_\eps$ be the unique
finite-energy solution in $L^2$ of
\begin{equation}\label{eq:elliptic-eps}
 (1-\Delta_\eps)f_\eps=v.
\end{equation}
Each component then satisfies
$(1-\Delta_\eps)F_{\eps,\gamma}=V_\gamma$.
Testing the equation and using \eqref{eq:unfold}, we obtain
\begin{equation}\label{eq:elliptic-bound}
 \int_\Q\big(|F_\eps|^2+|\nabla F_\eps|^2
                     +|\Delta_\eps F_\eps|^2\big)\,\dd\mu_\eps\le C_v.
\end{equation}
For fixed $\eps$, the scalar interior elliptic estimates, summed over
the components, give, on nested uniformizing charts $U'\Subset U$
and for every $k\ge0$,
\[
 \begin{aligned}
 \sum_\gamma\|F_{\eps,\gamma}\|_{C^k(U')}^2
 &\le C_{\eps,k,U}\sum_\gamma
                \|F_{\eps,\gamma}\|_{L^2(U)}^2\\
 &\quad+C_{\eps,k,U}\sum_\gamma
                \|V_\gamma\|_{C^{k+1}(U)}^2<\infty.
 \end{aligned}
\]
Thus all the squared-derivative sums and pairings below converge
locally smoothly. They can be differentiated using finite partial
sums, and the resulting full sums are invariant under changes of
chart.

We write $B_F=(B_{F_\gamma})$ for the componentwise $J$-invariant
Hessian,
\[
 \begin{gathered}
 B_{F_\gamma}(\xi,\eta)=\tfrac12
 \big(\Hess F_\gamma(\xi,\eta)+\Hess F_\gamma(J\xi,J\eta)\big),\\
 |B_F|^2=\sum_\gamma|B_{F_\gamma}|^2.
 \end{gathered}
\]
We also set $h_\eps=\log(1+|\nabla F_\eps|^2)$ and $B_\eps=B_{F_\eps}$.

\begin{lemma} \label{lem:log-estimate}
There is a constant $C_v$, independent of $\eps$, such that
\begin{equation}\label{eq:uniform-seeds}
 \int_\Q\big(h_\eps^2+|\nabla h_\eps|^2+|B_\eps|^2\big)
                                         \,\dd\mu_\eps\le C_v.
\end{equation}
\end{lemma}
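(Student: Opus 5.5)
The plan is to run the Bochner-type computation of \cite{GSS} for the vector-valued function $F_\eps$, so that every term is a sum of scalar quantities that is well defined on $\Q$, and to integrate over the compact orbifold. The starting point is the Bochner formula for each component. From $(1-\Delta_\eps)F_{\eps,\gamma}=V_\gamma$, we have
\[
 \tfrac12\Delta_\eps|\nabla F_\eps|^2
 =|\Hess F_\eps|^2+\langle\nabla F_\eps,\nabla\Delta_\eps F_\eps\rangle
   +\Ric(\omega_\eps)(\nabla F_\eps,\nabla F_\eps),
\]
with all terms summed over $\gamma$. The second term equals $|\nabla F_\eps|^2-\langle\nabla F_\eps,\nabla V\rangle$. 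Using the K\"ahler structure, the $J$-invariant part satisfies $|B_\eps|^2\le|\Hess F_\eps|^2$, and we keep $|B_\eps|^2$ on the good side. The curvature term is bounded below by $-\rho_\eps|\nabla F_\eps|^2$.

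Next I would pass to $h_\eps=\log(1+|\nabla F_\eps|^2)$. A standard computation, with Kato's inequality applied componentwise and summed via Cauchy--Schwarz, gives
\[
 \Delta_\eps h_\eps\ \ge\ c\frac{|B_\eps|^2}{1+|\nabla F_\eps|^2}
 +c|\nabla h_\eps|^2-2\rho_\eps-C|\nabla V|\,|\nabla F_\eps|/(1+|\nabla F_\eps|^2).
\]
To make this precise, one keeps a fraction of $|\Hess F|^2$ to absorb the term $|\nabla|\nabla F|^2|^2/(1+|\nabla F|^2)^2$. The conclusion is a differential inequality whose forcing terms are $\rho_\eps$ and bounded source terms controlled by \eqref{eq:source-bounds}.

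The integration steps then go as follows.
\begin{enumerate}
\item Multiply by $h_\eps$ and integrate over $\Q$ using orbifold Stokes, which is justified by the local smooth convergence of the sums. This gives
\[
\int|\nabla h_\eps|^2(1+h_\eps)\lesssim\int\rho_\eps h_\eps+C.
\]
\item Bound $\int\rho_\eps h_\eps$ by Young's inequality in Orlicz form, $ab\le a\log(1+a)+e^{b}$. It suffices to control $\int e^{\delta h_\eps}=\int(1+|\nabla F_\eps|^2)^\delta$ for small $\delta$, which follows from \eqref{eq:elliptic-bound}. The entropy bound \eqref{eq:entropy}, together with the total volume bound, then controls $\int\rho_\eps h_\eps$ up to an absorbable error.
\item Control $\int h_\eps^2$ by a Sobolev or Poincar\'e inequality for the $\omega_\eps$ that is uniform in $\eps$, again as in \cite{GSS}, together with $\int h_\eps\le\int|\nabla F_\eps|^2\le C_v$.
\item Integrating the inequality without the weight gives $\int|B_\eps|^2/(1+|\nabla F_\eps|^2)$. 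To obtain $\int|B_\eps|^2$ itself, I would instead use the identity
\[
\int|B_\eps|^2=\tfrac12\int|\Delta_\eps F_\eps|^2
\]
on the compact K\"ahler orbifold. This holds because $\tr B=\tfrac12\Delta$ and, on K\"ahler manifolds, $\int|\partial\bar\partial u|^2=\int|\Delta u|^2/4$ after integration by parts, with no curvature term for the complex Hessian. Summed over $\gamma$ and unfolded, it is bounded by \eqref{eq:elliptic-bound}. This bound on $|B_\eps|^2$ avoids any Ricci term.
\end{enumerate}

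The main obstacle is step 2: the entropy-against-$h_\eps$ absorption must be carried out with constants independent of $\eps$. This requires a uniform Sobolev inequality for $(\Q,\omega_\eps)$, which I would take from the entropy-based estimates of \cite{GPSSDiameter,GPSS} applied to \eqref{eq:approx-ma}; the bounds of Lemma~\ref{lem:approx} give the required uniform $L^p$ density and $L^\infty$ potentials. I must also check that the sums over the infinitely many components converge well enough to justify integration by parts. This follows from the fixed-$\eps$ elliptic estimates and the finiteness in \eqref{eq:elliptic-bound}.
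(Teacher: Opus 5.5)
There is a genuine gap in the pointwise differential inequality for $\Delta_\eps h_\eps$. Your bound on $\int_\Q|B_\eps|^2\,\dd\mu_\eps$ via $\int|B_\eps|^2=\tfrac12\int|\Delta_\eps F_\eps|^2$ is correct and is exactly what the paper does. The inequality for $\Delta_\eps h_\eps$, however, is false, and the mechanism you propose for it cannot work.

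Write $w=|\nabla F|$. The concavity of the logarithm produces the term $-4w^2|\nabla w|^2/(1+w^2)^2=-|\nabla h|^2$. The ordinary Kato inequality $|\Hess F|\ge|\nabla w|$ only gives $2|\Hess F|^2/(1+w^2)\ge2|\nabla w|^2/(1+w^2)$, which is about half of what must be absorbed when $w\gg1$. Keeping a fraction of $|\Hess F|^2$ on the good side makes this worse. What you actually reach this way is roughly $\Delta h\ge-\tfrac12|\nabla h|^2-C(1+\rho)$. Multiplying that by $1+h$ and integrating does not close, since $h$ is unbounded.

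Your inequality also fails outright. On flat $\C^n$, the function $F=e^{x_1}$ solves $(1-\Delta)F=0$, with $\rho=0$ and $V=0$. Yet $\Delta h=4e^{2x_1}/(1+e^{2x_1})^2\to0$, while $|\nabla h|^2\to4$ and $|B_F|^2/(1+w^2)\to\tfrac12$. So no inequality of the form $\Delta h\ge c|B_F|^2/(1+w^2)+c|\nabla h|^2-2\rho-\dots$ with $c>0$ can come out of a local Bochner--Kato computation, and step (i) has nothing valid to integrate.

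The missing idea is the K\"ahler refined Kato inequality. The $J$-anti-invariant part $B_a^\perp$ of each Hessian satisfies $\|B_a^\perp\|_{\mathrm{op}}^2\le\tfrac12|B_a^\perp|^2$, since its eigenvalues come in $\pm$ pairs. Summed over the components by Cauchy--Schwarz, with constants independent of their number, this gives $|\nabla w|\le\tfrac1{\sqrt2}(\sum_a|B_a^\perp|^2)^{1/2}+|B_F|$. Hence $|\Hess F|^2\ge2|\nabla w|^2-4|B_F||\nabla w|$.

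The sharp factor $2$ exactly cancels the bad term and leaves $\Delta h\ge-C_A(1+\rho)-Ce^{-h/2}|B_F||\nabla h|-Ce^{-h}|B_F|^2$. Note that this has no positive $|\nabla h|^2$ term, and $B_F$ enters as an error term rather than a good one. The $L^2$ bound on $\nabla h$ then comes only from integration by parts. One uses $\int_\Q\Delta(1+h)^2=0$ with $\Delta(1+h)^2=2(1+h)\Delta h+2|\nabla h|^2$, applies Young's inequality to the cross term, and invokes your step (iv) for $\int|B_F|^2$.

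Finally, what you call the main obstacle is not one. The inequalities $\rho h\le\rho\log(1+\rho)+e^h$ and $h^2\le Ce^h$, together with $\int_\Q e^{h_\eps}\,\dd\mu_\eps=\mu_\eps(\Q)+\int_\Q|\nabla F_\eps|^2\,\dd\mu_\eps\le C_v$, handle both the entropy term and $\int h_\eps^2$ directly. No uniform Sobolev or Poincar\'e inequality on $(\Q,\omega_\eps)$ is needed.
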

\begin{proof}
The orbifold Stokes theorem, applied to the global form
$\sum_\gamma d^cF_{\eps,\gamma}\wedge dd^cF_{\eps,\gamma}
\wedge\omega_\eps^{n-2}$, gives the summed scalar identity of
\cite[Lemma 5.1, equation (5.2)]{GSS}:
\[
 \int_\Q|B_\eps|^2\,\dd\mu_\eps
 =\tfrac12\int_\Q|\Delta_\eps F_\eps|^2\,\dd\mu_\eps\le C_v.
\]
The local convergence established above justifies the infinite sum
and the use of Stokes' theorem. In dimension one, the identity holds
pointwise.

We first consider a finite collection of components. Set
$w=|\nabla F|$, let $B_a^\perp$ be the $J$-anti-invariant Hessian of
$F_a$, and put $T((u_a))=\sum_a B_a^\perp u_a$. Then
\[
 TT^*=\sum_a(B_a^\perp)^2,\qquad [TT^*,J]=0,\qquad
 \|T\|_{\mathrm{op}}^2\le\tfrac12\sum_a|B_a^\perp|^2.
\]
Taking $u_a=\nabla F_a/w$ where $w>0$, we obtain
\[
 |\nabla w|\le\tfrac1{\sqrt2}
       \Big(\sum_a|B_a^\perp|^2\Big)^{1/2}+|B_F|.
\]
Since this bound is independent of the number of components, it
passes to the full sum. This is the Kato estimate used in
\cite[Lemma 3.1, equation (3.2)]{GSS}; on $\{w=0\}$, we have
$\nabla w=0$ almost everywhere. Since
$|\Hess F|^2=\sum_a|B_a^\perp|^2+|B_F|^2$, this implies
\[
 |\Hess F|^2\ge2|\nabla w|^2-4|B_F||\nabla w|.
\]

For $(1-\Delta)F=V$ and $|\nabla V|\le A$, the Bochner formula gives
\[
 \begin{gathered}
 \tfrac12\Delta w^2=|\Hess F|^2+\Ric(\nabla F,\nabla F)
                     +w^2-\langle\nabla F,\nabla V\rangle,\\
 \frac{w^2-\langle\nabla F,\nabla V\rangle}{1+w^2}\ge-\frac A2.
 \end{gathered}
\]
For $h=\log(1+w^2)$, substitute these inequalities into
\[
 \Delta h=\frac{\Delta w^2}{1+w^2}
             -\frac{4w^2|\nabla w|^2}{(1+w^2)^2}.
\]
On $\{w\le1\}$ use Young's inequality and on $\{w\ge1\}$ use
$|\nabla h|=2w|\nabla w|/(1+w^2)$. With $\Ric\ge-\rho g$, this gives
the calculation of \cite[Lemma 3.2, equations (3.5)-(3.6)]{GSS}
for the present inhomogeneous equation:
\begin{equation}\label{eq:log-drift}
 \Delta h\ge-C_A(1+\rho)
       -Ce^{-h/2}|B_F||\nabla h|-Ce^{-h}|B_F|^2.
\end{equation}
As in \cite[Lemma 5.2, equations (5.3)-(5.5)]{GSS}, Young's
inequality then gives
\[
 \begin{gathered}
 \Delta(1+h)^2\ge|\nabla h|^2-C|B_F|^2-C_A(1+h)(1+\rho),\\
 \rho h\le\rho\log(1+\rho)+e^h.
 \end{gathered}
\]
We integrate over $\Q$, using $h^2\le Ce^h$ and
\[
 \int_\Q e^{h_\eps}\,\dd\mu_\eps
 =\mu_\eps(\Q)+\int_\Q|\nabla F_\eps|^2\,\dd\mu_\eps\le C_v.
\]
Proposition~\ref{prop:entropy} and \eqref{eq:elliptic-bound} then
imply
\[
 \int_\Q(h_\eps^2+|\nabla h_\eps|^2+|B_\eps|^2)\,\dd\mu_\eps
 \le C_v\left(1+\int_\Q\rho_\eps\log(1+\rho_\eps)\,\dd\mu_\eps\right)
 \le C_v.
\]
\end{proof}

\subsubsection{Convergence without loss of energy}
\begin{lemma} \label{lem:solution-convergence}
For $v\in C_c^\infty(M)$, the solutions of
\eqref{eq:elliptic-eps} converge locally smoothly on $M$ to the
solution $f=(1-\Delta)^{-1}v$ on $\widehat M$. Moreover,
\begin{align}
 \int_Z f_\eps^2\,\dd\widetilde\mu_\eps
     &\longrightarrow\int_M f^2\,\dd\mu_M,
       \label{eq:L2-convergence}\\
 \int_Z|\nabla f_\eps|_{g_\eps}^2\,\dd\widetilde\mu_\eps
     &\longrightarrow\int_M|\nabla f|^2\,\dd\mu_M.
       \label{eq:energy-convergence}
\end{align}
\end{lemma}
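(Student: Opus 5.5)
We will argue by compactness: extract a local limit of the $f_\eps$ on $M$, identify it with $f$ through the weak formulation on $\widehat M$, and then upgrade the convergence of norms using the symmetry of the equation.

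\emph{Step 1: local smooth compactness.} By \eqref{eq:elliptic-bound} and unfolding \eqref{eq:unfold}, we have
\[
 \int_Z\bigl(f_\eps^2+|\nabla f_\eps|^2\bigr)\,\dd\widetilde\mu_\eps\le C_v .
\]
On each compact $K\Subset M$, Lemma~\ref{lem:approx} and Proposition~\ref{prop:entropy} give $g_\eps\to p^*\omega$ smoothly. Interior elliptic estimates for $(1-\Delta_\eps)f_\eps=v$ are therefore uniform on $K$, with $L^2$ bounds controlled by the global bound. Passing to a diagonal subsequence, $f_\eps\to f_0$ in $C^\infty_{\loc}(M)$. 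Fatou's lemma gives $\int_M(f_0^2+|\nabla f_0|^2)\,\dd\mu_M\le C_v$, so $f_0\in W^{1,2}(\widehat M)$ by Lemma~\ref{lem:form}.

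\emph{Step 2: identification of the limit.} For $\varphi\in C_c^\infty(M)$, we have
\[
 \int_Z\bigl(f_\eps\varphi+\langle\nabla f_\eps,\nabla\varphi\rangle_{g_\eps}\bigr)\,\dd\widetilde\mu_\eps
 =\int_Z v\varphi\,\dd\widetilde\mu_\eps .
\]
Passing to the limit on $\operatorname{supp}\varphi$ gives the same identity for $f_0$ with respect to $\mu_M$. Since $C_c^\infty(M)$ is dense in $W^{1,2}(\widehat M)$ by \eqref{eq:form-core}, the identity holds for all $\varphi\in W^{1,2}$. Uniqueness of the solution then yields $f_0=f=(1-\Delta)^{-1}v$. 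Because the limit is independent of the subsequence, the whole family converges.

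\emph{Step 3: no loss of energy.} This is the main point. Testing the equations with the solutions themselves gives
\[
 \int_Z\bigl(f_\eps^2+|\nabla f_\eps|^2\bigr)\,\dd\widetilde\mu_\eps=\int_Z vf_\eps\,\dd\widetilde\mu_\eps
 \longrightarrow\int_M vf\,\dd\mu_M=\int_M\bigl(f^2+|\nabla f|^2\bigr)\,\dd\mu_M .
\]
The convergence in the middle holds because $v$ has compact support in $M$, where the convergence is smooth. Hence the sum of the two integrals converges.

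Fatou's lemma applied separately to each integral gives
\[
 \liminf\int_Z f_\eps^2\,\dd\widetilde\mu_\eps\ge\int_M f^2\,\dd\mu_M,
 \qquad
 \liminf\int_Z|\nabla f_\eps|^2\,\dd\widetilde\mu_\eps\ge\int_M|\nabla f|^2\,\dd\mu_M .
\]
Combined with convergence of the sum, each $\liminf$ is in fact a limit equal to the target value. This gives \eqref{eq:L2-convergence} and \eqref{eq:energy-convergence}.

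The only delicate points are the following.
\begin{itemize}
 \item Fatou's lemma must be applied on $Z\setminus D_Z\cong M$, which carries full $\widetilde\mu_\eps$-measure. We exhaust $M$ by compacts, on which the convergence of the densities is smooth.
 \item Step 2 relies on the identification of $W^{1,2}(\widehat M)$ with the completion of $C_c^\infty(M)$. This is exactly the zero-capacity statement \eqref{eq:local-capacity}.
 \item The limit must be independent of $\eps$-dependent mass escaping to $D_Z$ or to infinity in $Z$. This escape is ruled out by the energy identity of Step 3, which requires no separate tightness estimate.
\end{itemize}
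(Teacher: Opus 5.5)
Your proposal is correct and follows the paper's proof essentially step for step: local smooth subsequential convergence from \eqref{eq:elliptic-bound} and interior estimates, identification of the limit through the weak formulation, the density statement of Lemma~\ref{lem:form}, and uniqueness, followed by the energy identity $\int_Z(f_\eps^2+|\nabla f_\eps|^2)\,\dd\widetilde\mu_\eps=\int_Z vf_\eps\,\dd\widetilde\mu_\eps$ and Fatou's lemma applied to each term separately. Your remarks on exhausting $M$ by compact sets, and on the energy identity removing the need for a separate tightness estimate, are accurate.
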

\begin{proof}
Interior elliptic estimates and \eqref{eq:elliptic-bound} give
$f_\eps\to f$ in $C^\infty_{\loc}(M)$ along a subsequence, with
\[
 \int_M(f^2+|\nabla f|^2)\,\dd\mu_M
 \le\liminf_{\eps\to0}\int_Z(f_\eps^2+|\nabla f_\eps|^2)
                               \,\dd\widetilde\mu_\eps<\infty.
\]
Testing against $C_c^\infty(M)$ and using Lemma~\ref{lem:form},
we see that $(1-\Delta)f=v$ on the full form domain. By uniqueness,
the convergence holds without passing to a subsequence.

Since $v$ has compact support in $M$, we have
\begin{equation}\label{eq:energy-identity}
 \begin{split}
 \int_Z(f_\eps^2+|\nabla f_\eps|^2)\,\dd\widetilde\mu_\eps
 &=\int_Z vf_\eps\,\dd\widetilde\mu_\eps\\
 &\longrightarrow\int_M vf\,\dd\mu_M
 =\int_M(f^2+|\nabla f|^2)\,\dd\mu_M.
 \end{split}
\end{equation}
By Fatou's lemma, the corresponding lower bound holds for each
nonnegative term separately. Since the sums are equal, this proves
\eqref{eq:L2-convergence} and \eqref{eq:energy-convergence}.
\end{proof}

On $X_{\reg}$, let $F=(F_\gamma)$ denote the local branches of $f$,
as in \eqref{eq:pack}. Each branch solves
$(1-\Delta)F_\gamma=V_\gamma$, and the same summed scalar interior
estimates show that the derivative sums are smooth there. We define
\begin{equation}\label{eq:target-sums}
 s=|\nabla F|^2,\qquad h=\log(1+s).
\end{equation}
Here $B_F$ is the $J$-invariant Hessian defined above. Note that
both $s$ and $|B_F|^2$ are sums over all the sheets of $M$.

\begin{lemma} \label{lem:target-seeds}
The quantities \eqref{eq:target-sums} satisfy
\begin{equation}\label{eq:target-seeds}
 \int_X(e^h+h^2+|\nabla h|^2+|B_F|^2)\,\dd\mu<\infty,
 \qquad h\in W^{1,2}(X).
\end{equation}
\end{lemma}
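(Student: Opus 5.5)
We obtain \eqref{eq:target-seeds} by passing to the limit $\eps\to0$ in Lemma~\ref{lem:log-estimate}, using the local smooth convergence of Lemma~\ref{lem:solution-convergence} on $X_{\reg}=\Q\setminus D$. Then we show that $h$ lies in $W^{1,2}(X)$, using the identification of $W^{1,2}(X)$ with finite-energy functions on $X_{\reg}$ recorded at the end of Section~\ref{sec:completion}.

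\emph{Step 1 (convergence of the sheet sums).} On a relatively compact chart $U\Subset X_{\reg}$, we must show that $s_\eps=|\nabla F_\eps|^2_{g_\eps}$ converges to $s$, and $|B_\eps|^2$ to $|B_F|^2$, say in $L^1_{\loc}$ or pointwise. Each single component $F_{\eps,\gamma}\to F_\gamma$ smoothly, by Lemma~\ref{lem:solution-convergence} and Proposition~\ref{prop:entropy}. Hence every finite partial sum converges, and Fatou's lemma in the index $\gamma$ gives
\[
 s\le\liminf_{\eps\to0}s_\eps,\qquad |B_F|^2\le\liminf_{\eps\to0}|B_\eps|^2
\]
pointwise. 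Fatou's lemma over $X$, with $\mu_\eps\to\mu$ locally smoothly on $X_{\reg}$, then yields
\[
 \int_X\big(e^h+h^2+|B_F|^2\big)\,\dd\mu
 \le\liminf_{\eps\to0}\int_\Q\big(e^{h_\eps}+h_\eps^2+|B_\eps|^2\big)\,\dd\mu_\eps<\infty .
\]
Here $\int e^{h_\eps}\,\dd\mu_\eps\le C_v$ by the proof of Lemma~\ref{lem:log-estimate}, and the other terms are controlled by \eqref{eq:uniform-seeds}. Note also that $\int_X e^h\,\dd\mu=\mu(X)+\int_M|\nabla f|^2\,\dd\mu_M$ by unfolding, so this term can be bounded directly.

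\emph{Step 2 (the gradient term; main obstacle).} The gradient term requires genuine local convergence, not only lower semicontinuity. We aim for $s_\eps\to s$ in $C^1_{\loc}(X_{\reg})$, which needs uniform control of the tails $\sum_{\gamma\notin\Gamma_0}$. To get it, we upgrade the energy convergence \eqref{eq:energy-convergence}. Combined with the unfolding identity \eqref{eq:unfold}, it says
\[
 \int_\Q|\nabla F_\eps|^2\,\dd\mu_\eps\to\int_X|\nabla F|^2\,\dd\mu .
\]
Since partial sums converge locally, the tail energies $\sum_{\gamma\notin\Gamma_0}\int_U|\nabla F_{\eps,\gamma}|^2$ become uniformly small once the finite set $\Gamma_0$ is large. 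The same holds for $\sum_{\gamma\notin\Gamma_0}\int_U F_{\eps,\gamma}^2$, by \eqref{eq:L2-convergence}.

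Next we feed this into the summed interior elliptic estimates. These are uniform in $\eps$ on $U'\Subset U\Subset X_{\reg}$, since the metrics $g_\eps$ converge smoothly there, and only finitely many $V_\gamma$ are nonzero on $U$. This gives uniform smallness of the tails in $C^k(U')$. Therefore $s_\eps\to s$ and $h_\eps\to h$ in $C^1_{\loc}(X_{\reg})$, and Fatou's lemma gives
\[
 \int_X|\nabla h|^2\,\dd\mu\le\liminf_{\eps\to0}\int_\Q|\nabla h_\eps|^2\,\dd\mu_\eps\le C_v .
\]
This also shows that $s$, $h$ and the $B_F$-sums are smooth on $X_{\reg}$, as asserted before the lemma.

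\emph{Step 3.} We have $h\in L^2$ because $h^2$ is integrable, and $h\in W^{1,2}_{\loc}(X_{\reg})$ with finite energy. Hence $h\in W^{1,2}(X)$ by the cutoff identification on the compact base, which uses the capacity cutoffs \eqref{eq:base-capacity}.
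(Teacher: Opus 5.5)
Your proposal is correct, and its architecture matches the paper's. You use Fatou's lemma (first over finitely many sheets, then exhausting) for $e^h$ and $|B_F|^2$. You use the no-loss energy convergence \eqref{eq:energy-convergence} to control the sheet tails. You use the capacity cutoffs to place $h$ in $W^{1,2}(X)$.

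The difference is in the gradient term. The paper extracts only $L^1_{\loc}(X_{\reg})$ convergence $|\nabla F_\eps|^2\to s$ from the tail control, using a cutoff $\eta\in C_c^\infty(M)$ with $\int_M(1-\eta)|\nabla f|^2<\delta$. It converts this to strong $L^2$ convergence of $h_\eps$ via $|\log(1+a)-\log(1+c)|^2\le C|a-c|$. It then uses the uniform bound \eqref{eq:uniform-seeds} to get $\nabla h_\eps\rightharpoonup\nabla h$ weakly in $L^2$, and concludes by lower semicontinuity.

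You instead upgrade the tail smallness to $C^k(U')$ through the summed interior estimates for the homogeneous equation satisfied by the tail components. This needs two additional inputs:
\begin{enumerate}
\item the elliptic constants must be uniform in $\eps$ on $U\Subset X_{\reg}$, which holds because $g_\eps\to q^*\omega$ smoothly there;
\item the $L^2$ tail control from \eqref{eq:L2-convergence}.
\end{enumerate}

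The paper's route is softer: it needs only first-order tail information plus weak compactness. Yours costs the extra regularity input, but it yields local smooth convergence $s_\eps\to s$. That also justifies the smoothness of the sheet sums asserted just before the lemma, and it reduces the final bound to a pointwise Fatou argument.

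One point should be stated more carefully. Tail smallness does not follow merely from convergence of finite partial sums. What you need is that the energy of $f_\eps$ outside a large compact subset of $M$ is asymptotically small, which \eqref{eq:energy-convergence} together with local smooth convergence on $M$ provides. Concretely, take $\Gamma_0$ to contain all sheets over $U$ that meet $\operatorname{supp}\eta$, exactly as the paper's $\eta$-argument does.
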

\begin{proof}
By unfolding and Fatou's lemma,
\[
 \int_X(e^h-1)\,\dd\mu=\int_M|\nabla f|^2\,\dd\mu_M,
 \qquad \int_X|B_F|^2\,\dd\mu\le C_v.
\]
For the second inequality, one first sums over finitely many sheets
on compact regular charts, and then exhausts $M$.

Given $\delta>0$, choose $0\le\eta\le1$ in $C_c^\infty(M)$ with
$\int_M(1-\eta)|\nabla f|^2\,\dd\mu_M<\delta$.
By \eqref{eq:energy-convergence} and local smooth convergence,
\[
 \lim_{\eps\to0}\int_Z(1-\eta)|\nabla f_\eps|_{g_\eps}^2
                                  \,\dd\widetilde\mu_\eps
 =\int_M(1-\eta)|\nabla f|^2\,\dd\mu_M<\delta.
\]
Over $U\Subset X_{\reg}$, the $\eta$-weighted sheet sum is locally
finite and converges smoothly, while unfolding bounds the remaining
terms:
\[
 \limsup_{\eps\to0}
 \big\||\nabla F_\eps|_{g_\eps}^2-s\big\|_{L^1(U,\mu)}
 \le C_U\delta.
\]
Letting $\delta\downarrow0$, we obtain
\begin{equation}\label{eq:s-strong}
 |\nabla F_\eps|_{g_\eps}^2\longrightarrow s
 \quad\text{in }L^1_{\loc}(X_{\reg},\mu).
\end{equation}
Since $|\log(1+a)-\log(1+c)|^2\le C|a-c|$ for $a,c\ge0$, it follows
that
\[
 h_\eps\longrightarrow h\text{ in }L^2(U),\qquad
 \nabla h_\eps\rightharpoonup\nabla h\text{ in }L^2(U),
\]
where \eqref{eq:uniform-seeds} supplies the bound on the weak
gradients. Exhaustion and lower semicontinuity then yield
$\int_X(h^2+|\nabla h|^2)\,\dd\mu\le C_v$.
Finally, \eqref{eq:base-capacity} and \cite[Lemma 2.2]{GSS}
extend $h$ to a function in $W^{1,2}(X)$.
\end{proof}

\subsubsection{Gradient estimates}

\begin{proposition} \label{prop:gradient}
For every $v\in C_c^\infty(M)$, the function
$(1-\Delta)^{-1}v$ is globally Lipschitz on $\widehat M$.
\end{proposition}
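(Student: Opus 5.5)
The plan is to prove a uniform $L^\infty$ bound for the summed quantity $s=|\nabla F|^2$ on $X_{\reg}$, and then convert it into a Lipschitz bound on $\widehat M$. Since $|\nabla f|^2(y)\le s(p(y))$ pointwise on $M$, a bound $s\le C$ $\mu$-a.e.\ gives $|\nabla f|\le C^{1/2}$ on $M$. Lemma~\ref{lem:form} together with the Sobolev-to-Lipschitz property of $\widehat M$ then gives a $C^{1/2}$-Lipschitz representative of $f$ on $\widehat M$. The work is therefore to bound $h=\log(1+s)$ from above on the compact base $X$. We do this by a De Giorgi--Nash--Moser iteration on $(X,\mu)$, driven by the differential inequality \eqref{eq:log-drift} and the Sobolev inequality \eqref{eq:base-sobolev}.

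\emph{Step 1: the differential inequality on $X$.} On $X_{\reg}$ the target metric is Ricci-flat, so $\rho=0$. The summed Bochner computation from Lemma~\ref{lem:log-estimate}, applied to the branches $F_\gamma$ of $f$ (smooth sums by the interior estimates), gives pointwise on $X_{\reg}$
\[
 \Delta h\ge -C_A-Ce^{-h/2}|B_F||\nabla h|-Ce^{-h}|B_F|^2 .
\]
Here $A$ bounds $|\nabla V|$ by \eqref{eq:source-bounds}. Alternatively, one can work on $\Q$ with $h_\eps$, $\rho_\eps$ and pass to the limit; this may be needed if the drift terms have to be controlled uniformly. By Lemma~\ref{lem:target-seeds}, $h\in W^{1,2}(X)$ and $|B_F|\in L^2(X)$. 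The capacity cutoffs \eqref{eq:base-capacity} then turn this pointwise inequality into a weak subsolution inequality tested against nonnegative $\varphi\in W^{1,2}(X)\cap L^\infty$. The cutoff errors vanish because $h$, $\nabla h$ and $\varphi$ have finite energy.

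\emph{Step 2: Moser iteration.} Test with $\varphi=(h-k)_+^{2\beta-1}$ for levels $k\ge1$ and powers $\beta\ge1$. The coefficient $|B_F|$ is only known to be in $L^2$, which is not enough for a standard iteration. The key point is that it always appears with the damping weights $e^{-h/2}$ and $e^{-h}$. So on $\{h>k\}$ one writes
\[
 e^{-h}|B_F|^2\le e^{-k}|B_F|^2 .
\]
After absorbing the gradient term by Young's inequality, this leaves a right-hand side whose lower-order coefficient has $L^1$ norm at most $Ce^{-k}\|B_F\|_2^2$. To get an honest iteration, one needs higher integrability of $e^{-h}|B_F|^2$ or a Caccioppoli-type estimate. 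I would first try to repeat the Bochner argument one order higher, along the lines of \cite[Section 3]{GSS}, where an $L^\infty$ bound for $\log(1+|\nabla u|^2)$ is obtained from the entropy bound and the Sobolev inequality on the base. That is: use the improved Kato inequality to absorb $|B_F||\nabla h|$ into the good term $|\Hess F|^2/(1+s)$ already present in $\Delta h$, before discarding it. With Ricci-flatness the only remaining error is the bounded term $C_A$. The result is a cleaner inequality, $\Delta h\ge -C_A$ weakly up to terms absorbable on $\{h\ge k\}$. Iterating with \eqref{eq:base-sobolev} then yields
\[
 \sup_X h\le C\bigl(1+\|h\|_{L^2(X)}\bigr),
\]
which is finite by Lemma~\ref{lem:target-seeds}.

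\emph{Step 3: conclusion.} Since $s\le e^{\sup h}$, we get $|\nabla f|\le C$ on $M$. This gives the global Lipschitz bound by Lemma~\ref{lem:form}: integrate along curves in $M$, then extend to $S$ by density and completeness.

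The main obstacle is Step 2. The summation over infinitely many sheets means that $B_F$ is only square-integrable, and all estimates must be run on the compact quotient $X$ rather than on $\widehat M$. The crucial point is showing that the Hessian drift terms can be absorbed, via the Kato inequality, uniformly in the number of sheets. If this fails on $X$, the fallback is to perform the iteration at level $\eps$ on $\Q$, using Proposition~\ref{prop:entropy} to control $\rho_\eps$ through its $L\log L$ bound as in \cite{GSS}, and then pass to the limit using \eqref{eq:s-strong}.
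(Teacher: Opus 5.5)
Your Steps~1 and~3 are sound, but Step~2, which carries the proof, has a genuine gap. The inequality $\Delta h\ge -C_A$ (up to terms absorbable on $\{h\ge k\}$) cannot be obtained from the improved Kato inequality.

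Set $b=|B_F|$, $\beta^\perp=(\sum_\gamma|B^\perp_{F_\gamma}|^2)^{1/2}$ and $t=s/(1+s)$. The Ricci-flat Bochner formula together with $|\nabla s|\le 2\sqrt s\,(\beta^\perp/\sqrt2+b)$ yields only
\[
 \tfrac{1+s}{2}\,(\Delta h+C_A)\ge(1-t)(\beta^\perp)^2-2\sqrt2\,t\,\beta^\perp b+(1-2t)\,b^2 .
\]
This quadratic form is indefinite once $t>1/3$. Absorbing the cross term leaves an undamped $-C|B_F|^2$, which is exactly why \eqref{eq:log-drift} keeps its drift terms. The mechanism cannot do better. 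For harmonic $F$ the same computation would have to give $\Delta h\ge0$. That fails for $F=\lambda|x|^{-2}$ on flat $\C^2\setminus\{0\}=\R^4\setminus\{0\}$, where $\Delta h\to-12|x|^{-2}$ as $\lambda\to\infty$.

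With the drift present and only $|B_F|\in L^2$, Moser's scheme for $h$ does not close. Testing with $(h-k)_+^{2\beta-1}$ leaves the term
\[
 \int(h-k)_+^{2\beta}e^{-h}|B_F|^2\le(2\beta/e)^{2\beta}e^{-k}\|B_F\|_2^2 .
\]
Constants of size $\beta^{2\beta}$ only give $\|(h-k)_+\|_{L^p}\lesssim p$. That is exponential integrability of $h$, which you already have from $e^h\in L^1$. Your fallback on $\Q$ fails for a related reason: an $L\log L$ bound on $\rho_\eps$ is far too weak for $\rho_\eps$ to act as a potential in a Moser iteration.

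The missing idea is to iterate on $z=e^{h/2}=\sqrt{1+s}$ instead of $h$. On the Ricci-flat $X_{\reg}$, the summed Bochner formula and the plain Cauchy--Schwarz bound $|\nabla s|^2\le 4s|\Hess F|^2$ give
\[
 \Delta z\ge\frac{|\Hess F|^2}{z^3}+\frac{s}{z}-A_v\ge -A_v z .
\]
This has no Hessian drift at all, so $B_F$ never enters the iteration.

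The drift inequality \eqref{eq:log-drift} is needed for one thing only: to show $z\in W^{1,2}(X)$, i.e.\ $\int_X e^h|\nabla h|^2\,\dd\mu<\infty$. The paper tests \eqref{eq:log-drift} with $\chi_j\Phi_R(h)$, where $\Phi_R(h)=e^h/(1+e^h/R)$.
\begin{enumerate}
\item Since $\Phi_R\le R$, the cutoff error $R\|\nabla\chi_j\|_2\|\nabla h\|_2$ vanishes as $j\to\infty$.
\item Since $\Phi_R\le e^h$ and $\Phi_R^2=e^h\Phi_R'$, the drift terms are bounded by $\tfrac12\Phi_R'|\nabla h|^2+C|B_F|^2$.
\end{enumerate}
So only $\int_X(e^h+|B_F|^2)\,\dd\mu<\infty$ from Lemma~\ref{lem:target-seeds} is used. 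Testing with $e^{ph}$ for $p>1$ would instead produce the uncontrolled term $e^{(p-1)h}|B_F|^2$.

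This finite energy is what lets the capacity cutoffs \eqref{eq:base-capacity} extend $\Delta z\ge-A_vz$ weakly across the singular set. Moser iteration with \eqref{eq:base-sobolev}, started from $\|z\|_2^2=\mu(X)+\int_M|\nabla f|^2\,\dd\mu_M$, then bounds $\|z\|_\infty$, and your Step~3 concludes.
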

\begin{proof}
Let $f=(1-\Delta)^{-1}v$, and let $F=(F_\gamma)$ denote its local
branches. The sum over the sheets is
\[
 s(x)=|\nabla F(x)|^2
     =\sum_{y\in p^{-1}(x)}|\nabla f(y)|^2,
 \qquad z(x)=e^{h(x)/2}=\sqrt{1+s(x)}.
\]
Since a change of local lift permutes the sheets, $s$ and $z$
are well-defined scalar functions on $X_{\reg}$. By unfolding,
\[
 \|z\|_{L^2(X)}^2
 =\mu(X)+\int_M|\nabla f|^2\,\dd\mu_M<\infty.
\]
All integrals below are taken over the compact base $X$.
We set $A_v=\|\nabla V\|_\infty$. By Ricci-flatness and
\eqref{eq:log-drift}, we have
\begin{equation}\label{eq:target-drift}
 \Delta h\ge-C_v-Ce^{-h/2}|B_F||\nabla h|-Ce^{-h}|B_F|^2.
\end{equation}
As in \cite[Lemma 3.3, equations (3.9)-(3.13)]{GSS}, we use
\[
 \Phi_R(h)=\frac{e^h}{1+e^h/R},\qquad
 \Phi_R'(h)=\frac{e^h}{(1+e^h/R)^2},\qquad
 \Phi_R^2=e^h\Phi_R'.
\]
Testing with $\chi_j\Phi_R(h)$ and applying Young's inequality gives
\begin{equation}\label{eq:weighted-h}
 \frac12\int_X\chi_j\Phi_R'(h)|\nabla h|^2\,\dd\mu
 \le C_v\int_X(e^h+|B_F|^2)\,\dd\mu
       +R\|\nabla\chi_j\|_2\|\nabla h\|_2.
\end{equation}
Letting first $j\to\infty$ and then $R\to\infty$,
Lemma~\ref{lem:target-seeds} and monotone convergence yield
\begin{equation}\label{eq:z-energy}
 \int_X e^h|\nabla h|^2\,\dd\mu<\infty,
 \qquad z\in W^{1,2}(X).
\end{equation}
On $X_{\reg}$, the Cauchy-Schwarz inequality and the Bochner formula
give
\[
 \begin{gathered}
 \left|\sum_a(\Hess F_a)(\nabla F_a)\right|^2
 \le s|\Hess F|^2,\\
 \Delta z\ge\frac{|\Hess F|^2}{z^3}+\frac{s}{z}-A_v
 \ge-A_vz.
 \end{gathered}
\]
The capacity cutoffs \eqref{eq:base-capacity} extend this inequality
weakly to $X$:
\[
 \int_X\langle\nabla z,\nabla\varphi\rangle\,\dd\mu
 \le A_v\int_X z\varphi\,\dd\mu
 \qquad(0\le\varphi\in W^{1,2}(X)\cap L^\infty(X)).
\]
We now apply the global Sobolev inequality \eqref{eq:base-sobolev}
on $X$, supplied by \cite[Theorem 2.1, equation (2.4)]{GSS}.
For $r\ge2$ with $z\in L^r(X)$, set $z_L=\min(z,L)$ and test
with $z_L^{r-1}$. This gives
\[
 \frac{4(r-1)}{r^2}\int_X|\nabla(z_L^{r/2})|^2\,\dd\mu
 \le A_v\int_X z z_L^{r-1}\,\dd\mu
 \le A_v\int_X z^r\,\dd\mu.
\]
Applying \eqref{eq:base-sobolev} to $z_L^{r/2}$ and letting
$L\to\infty$, we obtain
\[
 \|z\|_{rq_S}
 \le\left[C_S\left(1+\frac{A_vr^2}{4(r-1)}\right)\right]^{1/r}
       \|z\|_r,
\]
see \cite[Lemma 3.3, equations (3.14)-(3.15)]{GSS}.
For $r_k=2q_S^k$, the series
$\sum_k(1+\log r_k)/r_k$ converges. Iterating from $r_0=2$, and
using $|\nabla f(y)|^2\le s(p(y))$, we therefore obtain
\[
 \|\nabla f\|_\infty\le\|\sqrt{s}\|_\infty
 \le\|z\|_\infty\le C_v\|z\|_2<\infty.
\]
Thus the Sobolev inequality is used only on $X$, while the sum over
the sheets controls the gradient at every point of $M$. Integration
along curves and completion then prove the assertion.
\end{proof}

\subsection{The Bochner inequality}

We first give a version of the cutoff argument in
\cite[Lemma 4.3 and Theorem 5.1]{HS} on bounded regions. The
proposition below requires only local gradient bounds, and proves the
Bochner inequality for compactly supported test functions.

\begin{proposition} 
\label{prop:local-bochner}
Let $(W,d_W,\mu_W)$ be a proper length metric-measure space with
full support, whose measure is finite on bounded sets. Suppose that
$W^\circ=W\setminus S$ is an open smooth Riemannian manifold of
real dimension $m\ge2$, that the distance agrees locally with its
Riemannian distance, and that $\mu_W$ is its Riemannian volume
extended by zero across $S$. Assume moreover that $S$ has locally
zero $W^{1,2}$-capacity, that the local Sobolev spaces agree
with the finite-energy spaces on $W^\circ$, with the Dirichlet energy, and that $\Ric\ge0$ on $W^\circ$.

Let $U\subset W$ be open, and suppose that
\begin{equation}\label{eq:core-regularity}
 \begin{gathered}
 f\in W^{1,2}_{\loc}(U)\cap C^\infty(U\cap W^\circ),\\
 \nabla f\in L^\infty_{\loc}(U),\qquad
 \Delta f\in W^{1,2}_{\loc}(U).
 \end{gathered}
\end{equation}
Then $|\nabla f|^2\in W^{1,2}_{\loc}(U)$, and
\begin{equation}\label{eq:BE-core}
 \frac12\int_U|\nabla f|^2\Delta\varphi\,\dd\mu_W
 \ge\int_U\varphi\left(\frac{(\Delta f)^2}{m}
       +\langle\nabla f,\nabla\Delta f\rangle\right)\,\dd\mu_W
\end{equation}
for every nonnegative $\varphi\in W^{1,2}(U)\cap L^\infty(U)$
with compact support in $U$ and $\Delta\varphi\in L^\infty(U)$.
Here both Laplacians are understood weakly on $U$.
\end{proposition}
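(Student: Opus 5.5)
The plan is to prove the Bochner inequality first on the smooth locus $W^\circ$ and then carry it across $S$ with capacity cutoffs, in the spirit of \cite[Lemma 4.3 and Theorem 5.1]{HS}. On $U\cap W^\circ$ the function $f$ is smooth, and the pointwise Bochner formula together with $\Ric\ge0$ and $|\Hess f|^2\ge(\Delta f)^2/m$ gives
\[
 \tfrac12\Delta|\nabla f|^2\ge|\Hess f|^2+\langle\nabla f,\nabla\Delta f\rangle\ge\frac{(\Delta f)^2}{m}+\langle\nabla f,\nabla\Delta f\rangle .
\]
So the whole difficulty is integrating this identity against $\varphi$ when $\varphi$ does not vanish near $S$.

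\emph{Step 1: Hessian bound and $|\nabla f|^2\in W^{1,2}_{\loc}(U)$.} Fix a compact $K\subset U$ and a Lipschitz cutoff $\eta$ equal to one near $K$ and supported in $U$. Let $\chi_j$ be capacity cutoffs for $S$: they vanish near $S\cap\operatorname{supp}\eta$, satisfy $0\le\chi_j\le1$ and $\chi_j\to1$ on $W^\circ$, and have $\|\nabla\chi_j\|_2\to0$. Multiply the pointwise identity by $\eta^2\chi_j^2$ and integrate by parts on $W^\circ$. The term $\int\eta^2\chi_j^2\langle\nabla f,\nabla\Delta f\rangle$ is bounded since $\nabla f\in L^\infty$ and $\Delta f\in W^{1,2}_{\loc}$. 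The term with the gradient falling on the cutoffs is bounded by
\[
 \int \eta\chi_j|\nabla(\eta\chi_j)|\,|\nabla f|\,|\Hess f| ,
\]
which we absorb via Young's inequality into $\tfrac12\int\eta^2\chi_j^2|\Hess f|^2$ plus $C\|\nabla f\|_\infty^2\int|\nabla(\eta\chi_j)|^2$; this last quantity stays bounded. Letting $j\to\infty$ gives $\Hess f\in L^2_{\loc}(U\cap W^\circ)$ with a bound on $K$. Since $|\nabla|\nabla f|^2|\le 2|\nabla f||\Hess f|$, we get $|\nabla f|^2\in L^\infty\cap W^{1,2}(U\cap W^\circ\cap K)$. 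Finally, the assumed identification of local Sobolev spaces with finite-energy spaces on $W^\circ$ (legitimate because $S$ has zero capacity) gives $|\nabla f|^2\in W^{1,2}_{\loc}(U)$.

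\emph{Step 2: the inequality.} For $\varphi$ as in the statement, test the pointwise inequality with $\varphi\chi_j^2$, which has compact support in $U\cap W^\circ$, and integrate by parts once:
\[
 -\tfrac12\int\langle\nabla|\nabla f|^2,\nabla(\varphi\chi_j^2)\rangle\ge\int\varphi\chi_j^2\Big(\frac{(\Delta f)^2}{m}+\langle\nabla f,\nabla\Delta f\rangle\Big).
\]
Expanding $\nabla(\varphi\chi_j^2)$ produces an error term bounded by $\|\varphi\|_\infty\,\|\nabla|\nabla f|^2\|_{L^2(\operatorname{supp}\varphi)}\,\|\nabla\chi_j\|_2$, which tends to zero by Step 1. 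The remaining terms converge by dominated convergence. The limiting left side is $-\tfrac12\int\langle\nabla|\nabla f|^2,\nabla\varphi\rangle$. Because $|\nabla f|^2\in W^{1,2}_{\loc}$ and $\Delta\varphi\in L^\infty$ holds weakly on $U$, this equals $\tfrac12\int|\nabla f|^2\Delta\varphi$, which gives \eqref{eq:BE-core}.

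\emph{Main obstacle.} The delicate point is Step 1: getting the uniform local $L^2$ Hessian bound near $S$ without a priori knowing $\Hess f$ is integrable there. The absorption argument above needs care, because the cutoff gradient term must be controlled by $\|\nabla f\|_\infty$ and the capacity of $S$ only. I would use $\chi_j^2$ rather than $\chi_j$ precisely so that the absorption works.
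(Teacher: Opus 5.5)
Your proposal is correct and is essentially the paper's proof. The paper likewise tests the smooth Bochner inequality with $(\eta\chi_j)^2$ and absorbs the cutoff term by Young's inequality, obtaining a local $L^2$ bound on $\Hess f$ and hence $|\nabla f|^2\in W^{1,2}_{\loc}(U)$ via the Sobolev identification; it then tests with $\varphi\chi_j$ (your $\varphi\chi_j^2$ makes no difference) and kills the only singular error by $\|\varphi\|_\infty\|\nabla|\nabla f|^2\|_{L^2}\|\nabla\chi_j\|_{L^2}\to0$, with the final integration by parts against $\Delta\varphi$ localized by a cutoff equal to one near $\operatorname{supp}\varphi$.
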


\begin{proof}
Fix $V\Subset U$ containing $\operatorname{supp}\varphi$, and
choose $\eta\in\Lip_c(U)$, $0\le\eta\le1$, equal to one near
$\overline V$. Set
$K=\operatorname{supp}\eta$, and choose capacity cutoffs $\chi_j$,
vanishing near $S\cap K$, with
\[
 0\le\chi_j\le1,\qquad \chi_j\longrightarrow1\text{ a.e. on }K,
 \qquad \|\nabla\chi_j\|_{L^2(K)}\longrightarrow0.
\]
Testing the smooth Bochner inequality with $(\eta\chi_j)^2$
gives
\begin{equation}\label{eq:hs-hessian-cutoff}
 \begin{aligned}
 \frac12\int(\eta\chi_j)^2|\Hess f|^2\,\dd\mu_W
 &\le2\int|\nabla f|^2|\nabla(\eta\chi_j)|^2\,\dd\mu_W\\
 &\quad+\int(\eta\chi_j)^2|\nabla f|\,|\nabla\Delta f|\,\dd\mu_W.
 \end{aligned}
\end{equation}
The error near $S$ is bounded by
$C\|\nabla f\|_{L^\infty(K)}^2\|\nabla\chi_j\|_{L^2(K)}^2$.
Letting $j\to\infty$, we obtain
\[
 \int_V|\Hess f|^2\,\dd\mu_W
 \le C_\eta\int_K|\nabla f|^2\,\dd\mu_W
    +2\|\nabla f\|_{L^2(K)}\|\nabla\Delta f\|_{L^2(K)}<\infty.
\]
For $G=|\nabla f|^2$, it follows that
\[
 |\nabla G|\le2|\nabla f|\,|\Hess f|,
 \qquad G\in W^{1,2}_{\loc}(U).
\]
Here the Sobolev identification extends the finite-energy function
locally across $S$.

We now test the Bochner inequality with $\varphi\chi_j$, using
Sobolev approximation on $W^\circ$. The only singular cutoff error
satisfies
\[
 \left|\int_V\varphi\langle\nabla G,\nabla\chi_j\rangle
                      \,\dd\mu_W\right|
 \le\|\varphi\|_\infty\|\nabla G\|_{L^2(V)}
          \|\nabla\chi_j\|_{L^2(V)}\longrightarrow0.
\]
Since $\varphi$ has compact support in $V$, integration by parts
and the inequality $|\Hess f|^2\ge(\Delta f)^2/m$ give
\eqref{eq:BE-core}. We note that all the test functions are
supported in the fixed bounded set $K$.
\end{proof}

The proposition extends the Bochner inequality across $S$ for
functions with locally bounded gradient as in
\eqref{eq:core-regularity}. Here we do not  assert that a restricted
metric ball is an RCD space. The local-to-global theorem of
Honda-Sun \cite[Theorem 3.5]{HS} additionally assumes uniform local
volume doubling, a Poincar\'e inequality, and quantitative Lipschitz
estimates for harmonic functions. Since we have not established this
package independently on $\widehat M$, we carry out the remaining
global passage explicitly below.

\begin{proof}[Proof of Theorem~\ref{thm:lifting}]
The case $n=1$ is that of the flat cover of an elliptic curve. For
$n\ge2$, Section~\ref{sec:completion}, Lemma~\ref{lem:form},
Ricci-flatness and Proposition~\ref{prop:gradient} allow us to apply
Proposition~\ref{prop:local-bochner} with
$(W,W^\circ,m)=(\widehat M,M,2n)$ to every solution
$f=(1-\Delta)^{-1}v$ with $v\in C_c^\infty(M)$.
Indeed, $\Delta f=f-v\in W^{1,2}(\widehat M)$.

We now extend \eqref{eq:BE-core} to the full class of test functions,
using the global gradient bound of Proposition~\ref{prop:gradient}.
For such a solution $f$, take distance cutoffs $\eta_R$ equal to
one on $B_R(o)$ and supported in $B_{2R}(o)$, with
$0\le\eta_R\le1$ and $|\nabla\eta_R|\le2/R$.
Removing the singular cutoff in \eqref{eq:hs-hessian-cutoff} as
before gives
\[
 \frac12\int\eta_R^2|\Hess f|^2\,\dd\mu_M
 \le\frac C{R^2}\|\nabla f\|_2^2
       +\|\nabla f\|_2\|\nabla\Delta f\|_2.
\]
Letting $R\to\infty$, we obtain
\begin{equation}\label{eq:hs-hessian-global}
 \Hess f\in L^2(M),\qquad
 G=|\nabla f|^2\in W^{1,2}(\widehat M),
\end{equation}
since
\[
 \|G\|_2\le\|\nabla f\|_\infty\|\nabla f\|_2,
 \qquad
 \|\nabla G\|_2\le2\|\nabla f\|_\infty\|\Hess f\|_2.
\]
For any nonnegative $\varphi\in D(\Delta)\cap L^\infty$
with $\Delta\varphi\in L^\infty$, we repeat the Bochner test on the
smooth locus with $\varphi\eta_R\chi_j$. We first remove $\chi_j$
as above, the remaining cutoff error then satisfies
\[
 \left|\int\varphi\langle\nabla G,\nabla\eta_R\rangle
                         \,\dd\mu_M\right|
 \le\frac2R\|\varphi\|_2\|\nabla G\|_2\longrightarrow0.
\]
Thus \eqref{eq:BE-core} holds without the restriction to compact
support. For an arbitrary $f\in D(\Delta)$ with
$\Delta f\in W^{1,2}$, choose $v_j\in C_c^\infty(M)$ converging to
$v=(1-\Delta)f$ in $W^{1,2}$, and set $f_j=(1-\Delta)^{-1}v_j$.
By the spectral calculus,
\begin{equation}\label{eq:solution-core-convergence}
 \|f_j-f\|_{W^{1,2}}+\|\Delta f_j-\Delta f\|_{W^{1,2}}
 \le2\|v_j-v\|_{W^{1,2}}\longrightarrow0.
\end{equation}
Thus $|\nabla f_j|^2\to|\nabla f|^2$ and
$(\Delta f_j)^2\to(\Delta f)^2$ in $L^1$. For the mixed term,
\[
 \begin{aligned}
 &\|\langle\nabla f_j,\nabla\Delta f_j\rangle
       -\langle\nabla f,\nabla\Delta f\rangle\|_1\\
 &\quad\le\|\nabla(f_j-f)\|_2\|\nabla\Delta f_j\|_2
       +\|\nabla f\|_2\|\nabla\Delta(f_j-f)\|_2\longrightarrow0.
 \end{aligned}
\]
Since $\varphi$ and $\Delta\varphi$ are bounded, we may pass to the
limit in \eqref{eq:BE-core}, proving $\BE(0,2n)$.

It remains to verify the volume growth condition in
\cite[Definition 2.12]{HS}. Choose $o\in\widehat M$ and $R_0\ge1$
with $\Gamma\overline B_{R_0}(o)=\widehat M$. By properness,
\[
 \mathcal A=\{\gamma:d_M(o,\gamma o)\le2R_0+1\},\qquad
 |\mathcal A|<\infty.
\]
Subdividing geodesics into unit intervals and choosing nearby orbit
points, we find $|\gamma|_{\mathcal A}\le C(d_M(o,\gamma o)+1)$.
Note that $\mathcal A$ contains the finite stabilizer of $o$.
For $x\in B_R(o)$, choose $\gamma\in\Gamma$ with
$x\in\gamma\overline B_{R_0}(o)$. Since $\gamma$ is an isometry,
$\gamma\overline B_{R_0}(o)=\overline B_{R_0}(\gamma o)$, and
the triangle inequality gives
\[
 d_M(o,\gamma o)\le d_M(o,x)+d_M(x,\gamma o)\le R+R_0.
\]
Thus
\[
 B_R(o)\subset
 \bigcup_{d_M(o,\gamma o)\le R+R_0}
       \gamma\overline B_{R_0}(o).
\]
Since each translate has the same measure, counting these translates
by means of the word-length bound above gives
\begin{equation}\label{eq:volume-growth}
 \mu_M(B_R(o))
 \le\mu_M(\overline B_{R_0}(o))
      \sum_{0\le k\le C(R+1)}|\mathcal A|^k
 \le Ce^{CR}\qquad(R\ge1).
\end{equation}
Together with the Sobolev-to-Lipschitz property of
Lemma~\ref{lem:form}, this yields $\RCD(0,2n)$ by
\cite[Definitions 2.11-2.12]{HS}.
\end{proof}

\section{The fundamental group of the regular locus}\label{sec:decomposition}

\subsection{The Cheeger-Gromoll argument}

Let $p:M\to X_{\reg}$ be the universal cover, set
$\Gamma=\pi_1(X_{\reg})$, and denote by $\widehat M$ its metric
completion for the lifted Ricci-flat metric. By
Proposition~\ref{prop:completion} and Theorem~\ref{thm:lifting},
$\widehat M$ is a proper geodesic $\RCD(0,2n)$ space and a normal klt
complex space. The $\Gamma$-action is proper and cocompact by
holomorphic isometries, with $\widehat M/\Gamma=X$. Moreover,
$M\subset \widehat M_{\reg}$ and $\widehat M\setminus M$ is analytic of complex
codimension at least two.

\begin{lemma}\label{prop:cg-structure}
There is an isometric splitting
\begin{equation}\label{eq:cg-metric}
 \widehat M=\R^r\times C,\qquad 0\le r\le2n,
\end{equation}
with $C$ compact. The group $\Gamma$ contains a finite-index
subgroup $\Gamma_0\simeq\Z^r$ whose Euclidean projection is
an isomorphism onto a full translation lattice. In particular,
$\Gamma_0$ acts freely on $\widehat M$.
\end{lemma}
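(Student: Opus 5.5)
The plan follows Section~\ref{sec:etale}, with $\widehat M$ in place of $\tX$. By Theorem~\ref{thm:lifting} and Proposition~\ref{prop:completion}, $(\widehat M,d_M,\mu_M)$ is a proper geodesic $\RCD(0,2n)$ space on which $\Gamma$ acts by isometries, properly and cocompactly, with finite stabilizers. Applying Gigli's splitting theorem \cite{Gigli} repeatedly, I split off every line, which gives $\widehat M=\R^r\times C$ where $C$ contains no line. Cocompactness then makes $C$ compact: if $C$ were noncompact, the standard Cheeger--Gromoll translation argument would produce a line in $C$. Take rays in $C$, translate their midpoints back into a fixed compact fundamental region using elements of $\Gamma$ (these are isometries of the product, hence preserve the splitting as below), and pass to a limit by properness. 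Alternatively, I would simply invoke the structure result \cite[Theorem 1.3]{MW}, which is stated for spaces admitting a cocompact isometric action, exactly as was done for \eqref{sp:eq:mw-product}. Either route gives $r\le 2n$, since an $\RCD(0,2n)$ space splits off at most $2n$ Euclidean factors.

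Next, the splitting is canonical. Since $C$ is compact, the Euclidean factor is spanned by all lines, so every isometry preserves the product structure:
\[
\operatorname{Isom}(\widehat M)=\operatorname{Isom}(\R^r)\times\operatorname{Isom}(C).
\]
The argument is the Busemann-function one used in Proposition~\ref{sp:prop:maximal}, where every line has the form $(a_0+tv,y_0)$. By \cite[Theorem~A]{GuijarroSantos}, $\operatorname{Isom}(C)$ is a compact Lie group. Let $\rho_1:\Gamma\to\operatorname{Isom}(\R^r)$ and $\rho_2:\Gamma\to\operatorname{Isom}(C)$ be the two projections. The image $\rho_1(\Gamma)$ acts cocompactly on $\R^r$. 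It is discrete with finite kernel: if $\rho_1(\gamma_j)\to\mathrm{id}$, then properness together with compactness of $C$ forces the $\gamma_j$ to lie in a finite set. Hence $\rho_1(\Gamma)$ is a crystallographic group.

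Bieberbach's theorem \cite{Bieberbach} now gives a finite-index translation lattice $\Lambda\subset\rho_1(\Gamma)$ with $\Lambda\cong\Z^r$. Its preimage $\Gamma_1=\rho_1^{-1}(\Lambda)$ has finite index in $\Gamma$ and is an extension of $\Z^r$ by the finite group $\ker\rho_1$. To obtain a subgroup mapping isomorphically onto a lattice, I would take $\Gamma_0$ to be a finite-index subgroup of $\Gamma_1$ meeting $\ker\rho_1$ trivially. Since $\Gamma_1$ is finitely generated and virtually abelian, its centralizer of $\ker\rho_1$ has finite index, and a power map makes the extension split on a finite-index sublattice. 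Concretely, choose lifts $\tilde e_i\in\Gamma_1$ of a basis of $\Lambda$, replace them by powers $\tilde e_i^{N}$ commuting with $\ker\rho_1$ and with each other up to elements of $\ker\rho_1$, and then pass to a further power to kill the resulting commutator and torsion. The subgroup $\Gamma_0$ generated by these elements satisfies $\Gamma_0\cong\Z^r$, and $\rho_1|_{\Gamma_0}$ is an isomorphism onto the lattice $N'\Lambda$.

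Freeness is then immediate. If $\gamma\in\Gamma_0$ is nontrivial, then $\rho_1(\gamma)$ is a nonzero translation, so $\gamma$ moves every point of $\R^r\times C$.

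I expect the main obstacle to be the group-theoretic step of producing a torsion-free finite-index subgroup that injects, since $\ker\rho_1$ need not be trivial. Here the fallback is the standard fact that a finitely generated virtually-$\Z^r$ group contains a finite-index normal subgroup isomorphic to $\Z^r$. Any such subgroup meets the finite group $\ker\rho_1$ trivially, so its image under $\rho_1$ is a finite-index subgroup of $\rho_1(\Gamma)$ isomorphic to $\Z^r$. A torsion-free finite-index subgroup of a crystallographic group consists of translations only after a further finite-index restriction to $\rho_1^{-1}(\Lambda)$, which yields the full translation lattice required.
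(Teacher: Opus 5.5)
Your proposal is correct and follows essentially the paper's proof. The shared steps are the maximal splitting via \cite{Gigli} with $C$ compact by the Cheeger--Gromoll argument, discreteness and finiteness of the kernel of the Euclidean projection from properness, Bieberbach, and then your power construction: you raise lifts of a lattice basis to a power that centralizes the finite kernel $F$, then to a further power that kills their commutators, which lie in $F$ and are central. This is exactly the paper's passage to the centralizer $H$ of $F$ and the generators $h_i^{|F|}$.

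Drop the two asides, though. First, the alternative via \cite[Theorem 1.3]{MW} is used in the paper for universal covers of compact RCD spaces, and it does not apply directly here because $\widehat M\to X$ is branched. Second, the ``fallback'' presupposes that $\Gamma$ is virtually abelian, which is precisely what the power construction proves. Your justification that the centralizer has finite index makes the same presupposition; it has finite index simply because $\operatorname{Aut}(F)$ is finite.
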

\begin{proof}
Following Cheeger--Gromoll \cite{CG}, repeated use of Gigli's
splitting theorem \cite{Gigli} gives $\widehat M=\R^r\times C$ with
$r\le2n$ maximal. Thus $C$ contains no line, and  is compact.

Let $\rho:\Gamma\to\operatorname{Isom}(\R^r)$ be the Euclidean
projection, $F=\ker\rho$, and $\Delta=\rho(\Gamma)$. Compactness
of $C$ and properness of the action imply that $\rho^{-1}(K)$
is finite for every compact $K\subset\operatorname{Isom}(\R^r)$.
Thus $F$ is finite and $\Delta$ is discrete; cocompactness
descends to $\Delta$. Bieberbach's theorem \cite{Bieberbach}
gives a finite-index translation lattice $L\subset\Delta$.

To remove the finite kernel, define
\[
 H=\{h\in\rho^{-1}(L):hf=fh\text{ for every }f\in F\},
 \qquad L_0=\rho(H).
\]
The subgroup $H$ is the kernel of the conjugation homomorphism
$\rho^{-1}(L)\to\operatorname{Aut}(F)$. Since $F$ is finite,
$H$ has finite index in $\Gamma$, and $L_0$ is a full lattice.
Moreover, $[H,H]\subset F\cap H\subset Z(H)$. Lift a basis
of $L_0$ to $h_1,\ldots,h_r\in H$ and put $e=|F|$. Their
commutators are central of orders dividing $e$, so the powers
$h_i^e$ commute. Consequently,
\[
 \Gamma_0=\langle h_1^e,\ldots,h_r^e\rangle
 \xrightarrow[\;\rho\;]{\;\simeq\;}eL_0\simeq\Z^r.
\]
Since $F$ is finite and $[\Delta:eL_0]<\infty$, $\Gamma_0$ has finite
index in $\Gamma$. Its nonidentity elements translate the
Euclidean factor nontrivially, so its action is free.
\end{proof}
\begin{remark} The above lemma should also follow the same argument as in Theorem~\ref{thm:etale} by first establishing a holomorphically isometric splitting of $\hat M$.
\end{remark}
\subsection{Proof of Theorem~\ref{thm:decomposition}}

\begin{proof}
Since $\widehat M_{\reg}\setminus M$ has complex codimension at least two,
transversality of loops and homotopies gives
$\pi_1(\widehat M_{\reg})=\pi_1(M)=\{1\}$. For a normal complex space,
the inclusion of the regular locus induces a surjection on
fundamental groups: the regular locus of its connected normal
universal cover is connected. Consequently,
\begin{equation}\label{eq:completion-pi1}
 \pi_1(\widehat M_{\reg})=\{1\},\qquad \pi_1(\widehat M)=\{1\}.
\end{equation}

Take $\Gamma_0$ from Lemma~\ref{prop:cg-structure} and set
$X'=\widehat M/\Gamma_0$. The free proper action makes $\widehat M\to X'$ a
covering by local biholomorphisms, and finite index makes $X'$
compact. The induced map $\tau:X'\to X$ is finite and \'etale over
$X_{\reg}$, hence quasi-\'etale. Finite Riemann existence
\cite[Expos\'e XII, Th\'eor\`eme 5.1]{SGA1} algebraizes its restriction over
$X_{\reg}$ and normalization over $X$ recovers $X'$ and makes it
projective. Quasi-\'etale pullback preserves klt singularities and
gives $K_{X'}=\tau^*K_X\equiv0$. Thus $X'$ is a projective
Calabi-Yau variety, and \eqref{eq:completion-pi1} identifies
$\widehat M\to X'$ as its universal cover, with $\pi_1(X')=\Gamma_0$.

Pullback of bounded potentials and uniqueness in \cite{EGZ}
identify $\omega':=\tau^*\omega$ with the Ricci-flat current in
$c_1(\tau^*L)$. Let $g'$ be its smooth Riemannian metric on
$X'_{\reg}$, and put
\[
 U=\tau^{-1}(X_{\reg})\subset X'_{\reg},\qquad
 S=X'_{\reg}\setminus U.
\]
Denote by $d'$ the Riemannian length distance on
$(X'_{\reg},g')$ and by $d_U$ that on $(U,g'|_U)$. Namely,
for $a,b\in U$,
\[
 \begin{aligned}
 d'(a,b)&=\inf_{\substack{\sigma:[0,1]\to X'_{\reg}\\
                         \sigma(0)=a,\ \sigma(1)=b}}L_{g'}(\sigma),\\
 d_U(a,b)&=\inf_{\substack{\sigma:[0,1]\to U\\
                          \sigma(0)=a,\ \sigma(1)=b}}L_{g'}(\sigma),
 \end{aligned}
\]
where the infima are over piecewise smooth curves. The set $S$ is
closed analytic in $X'_{\reg}$ with complex codimension at least two.
After smoothing corners, a curve with endpoints in $U$ can be
perturbed, fixing its endpoints, to be transverse to a Whitney
stratification of $S$. Since every stratum has real codimension at
least four, the perturbed curve avoids $S$. The perturbation may be
arbitrarily small in $C^1$, so its $g'$-length changes by arbitrarily
little. Together with $d'\le d_U$, this proves
\[
 d_U(a,b)=d'(a,b)\qquad(a,b\in U).
\]
Thus the inclusion of $U$ into $(X'_{\reg},d')$ is isometric and
has dense image, so the two spaces have isometric completions.

On the other hand, $U=M/\Gamma_0$, and lifting curves to $M$ gives
\[
 d_U([z],[w])=\inf_{\gamma\in\Gamma_0}d_M(z,\gamma w)
 \qquad(z,w\in M).
\]
Since $\widehat M$ is the completion of $(M,d_M)$, the completion of
$(U,d_U)$ is $\widehat M/\Gamma_0$ with the distance given by the same
formula for $z,w\in \widehat M$. Consequently, the completed Ricci-flat
length metric on $X'$ agrees with this quotient distance.

Theorem~\ref{thm:etale} and its proof now give a finite \'etale
cover $A\times Y\to X'$ and a holomorphic isometric splitting
\begin{equation}\label{eq:regular-cover-product}
 \widehat M\simeq\C^q\times Y,\qquad q=\dim A,
\end{equation}
where $A$ is an abelian variety and $Y$ is a simply connected
projective Calabi-Yau variety. The composition $A\times Y\to X$
is quasi-\'etale. Since $\widehat M_{\reg}=\C^q\times Y_{\reg}$,
\eqref{eq:completion-pi1} gives $\pi_1(Y_{\reg})=\{1\}$.
Also $\pi_1(A\times Y)=\Z^{2q}$ has finite index in
$\pi_1(X')=\Gamma_0\simeq\Z^r$, so $r=2q$.

We claim that every connected finite quasi-\'etale cover of $Y$ is
trivial. By Zariski purity \cite[Expos\'e X, Th\'eor\`eme 3.1]{SGA1},
its restriction over $Y_{\reg}$ is \'etale, and
this restriction is connected by normality. Simple connectedness of $Y_{\reg}$
forces its degree to be one, and the original finite map is then an
isomorphism. The claim is proved. Since $K_Y\equiv0$,
\cite[Theorem 1.2]{Gongyo} gives a trivial reflexive power of $K_Y$.
Applying the claim to the global index-one cover associated with
the smallest such power, we conclude that
$K_Y\sim0$. The Cartier property and the klt condition then imply
that $Y$ is canonical. Moreover, $q(Y)=0$ as in Section~\ref{sec:etale}, and since every such cover is trivial, we
obtain $\widetilde q(Y)=0$ by definition.

It remains to verify the torus dimension. The cover constructed above
satisfies $q(A\times Y)=q$, which gives $\widetilde q(X)\ge q$.
Conversely, let $X_1\to X$ be any connected finite quasi-\'etale
cover, and let $W$ be a connected component of the normalization of
$X_1\times_X(A\times Y)$. Both projections are finite, surjective and
quasi-\'etale. The induced cover of
$(A\times Y)_{\reg}=A\times Y_{\reg}$ is determined by a
finite-index sublattice of $\pi_1(A)$, and is therefore of the form
$A'\times Y_{\reg}$ for an isogeny $A'\to A$. By uniqueness of finite
normal extensions, $W=A'\times Y$, so that $q(W)=q$. The trace map
makes $H^1(X_1,\cO_{X_1})\to H^1(W,\cO_W)$ injective. Consequently,
$q(X_1)\le q$, which proves $q=\widetilde q(X)\le n$.

Finally, the inverse image $V$ of $X_{\reg}$ in $A\times Y$ is a
connected finite \'etale cover of $X_{\reg}$, and its complement in
$A\times Y_{\reg}$ has complex codimension at least two. By the
transversality observation above,
\[
 \pi_1(V)=\pi_1(A\times Y_{\reg})
         =\Z^{2q}=\Z^{2\widetilde q(X)}.
\]
This subgroup has finite index in $\pi_1(X_{\reg})$, which completes
the proof.
\end{proof}

\end{document}